\documentclass[a4paper,final]{amsart}

\usepackage{amsmath, amssymb, amsthm}
\usepackage{bm}
\usepackage{amsfonts}
\usepackage{enumitem,fullpage}
\usepackage{showkeys}
\usepackage{contour}
\contournumber{64}
\contourlength{.06em}
\usepackage{color}
\usepackage{here}
\usepackage{tikz}
\usetikzlibrary{intersections, calc, arrows, cd,positioning,patterns}
\usepackage[colorlinks,citecolor=darkgreen,linkcolor=red]{hyperref}
\definecolor{darkgreen}{rgb}{0.0, 0.6, 0.0}

\setenumerate[1]{label={\upshape(\arabic*)}}
\setenumerate[2]{label={\upshape(\alph*)}}
\numberwithin{equation}{section}
\numberwithin{figure}{section}

\allowdisplaybreaks

\theoremstyle{plain}
\newtheorem{thm}{Theorem}[section]
\newtheorem{prp}[thm]{Proposition}
\newtheorem{lem}[thm]{Lemma}
\newtheorem{cor}[thm]{Corollary}
\newtheorem{fct}[thm]{Fact}
\newtheorem{ques}[thm]{Question}
\newtheorem*{thm*}{Theorem}
\newtheorem*{prp*}{Propostion}
\newtheorem*{lem*}{Lemma}
\newtheorem*{fct*}{Fact}

\theoremstyle{definition}
\newtheorem{dfn}[thm]{Definition}

\newtheorem{rmk}[thm]{Remark}
\newtheorem{ex}[thm]{Example}

\newtheorem*{dfn*}{Definition}
\newtheorem*{nt*}{Notattion}
\newtheorem*{rmk*}{Remark}
\newtheorem*{ex*}{Example}

\newtheorem{thma}{Theorem}

\newcommand{\K}{\mathsf{K}}
\newcommand{\M}{\mathsf{M}}
\newcommand{\D}{\mathsf{D}}
\newcommand{\gp}{\mathsf{gp}}
\newcommand{\pow}{\mathsf{P}}

\newcommand{\SemiGrp}{\mathsf{SemiGrp}}

\newcommand{\gen}[1]{\left\la #1 \right\ra}
\newcommand{\ideal}{\mathrm{ideal}}
\newcommand{\face}{\mathrm{face}}

\newcommand{\spcl}{\mathrm{spcl}}
\newcommand{\Bor}{\bigvee}
\newcommand{\Band}{\bigwedge}

\DeclareMathOperator{\Serre}{Serre}
\DeclareMathOperator{\Face}{Face}

\DeclareMathOperator{\MSpec}{MSpec}
\DeclareMathOperator{\LSpec}{LSpec}

\DeclareMathOperator{\Spcl}{Spcl}
\DeclareMathOperator{\Genl}{Genl}
\newcommand{\genl}{\mathrm{genl}}

\newcommand{\irred}{\mathrm{irred}}

\newcommand{\Cross}{\mathbin{\tikz [x=1.4ex,y=1.4ex,line width=.2ex] \draw (0,0) -- (1,1) (0,1) -- (1,0);}}%

\newcommand{\ol}{\overline}

\newcommand{\la}{\langle}
\newcommand{\ra}{\rangle}
\newcommand{\iso}{\cong}

\newcommand{\imply}{\Rightarrow}
\newcommand{\equi}{\Leftrightarrow}

\newcommand{\xr}[1]{\xrightarrow{\, #1 \, }}
\newcommand{\inj}{\hookrightarrow}
\newcommand{\surj}{\twoheadrightarrow}

\newcommand{\simto}{\xr{\sim}}
\newcommand{\isoto}{\xr{\iso}}


\newcommand{\infl}{\rightarrowtail}
\newcommand{\defl}{\twoheadrightarrow}


\newcommand{\bbF}{\mathbb{F}}

\newcommand{\bbN}{\mathbb{N}}
\newcommand{\bbP}{\mathbb{P}}

\newcommand{\bbZ}{\mathbb{Z}}

\newcommand{\bbk}{\Bbbk}
\newcommand{\calB}{\mathcal{B}}

\newcommand{\LL}{\Lambda}

\newcommand{\lmd}{\lambda}

\newcommand{\sg}{\sigma}

\newcommand{\symg}{\mathfrak{S}}

\DeclareMathOperator{\rk}{rk}

\DeclareMathOperator{\Supp}{Supp}
\newcommand{\abs}[1]{\left| #1 \right|}

\DeclareMathOperator{\Set}{\mathsf{Set}}

\DeclareMathOperator{\Mon}{\mathsf{Mon}}

\newcommand{\catA}{\mathcal{A}}

\newcommand{\catC}{\mathcal{C}}
\newcommand{\catD}{\mathcal{D}}
\newcommand{\catE}{\mathcal{E}}
\newcommand{\catF}{\mathcal{F}}

\newcommand{\catS}{\mathcal{S}}

\newcommand{\catX}{\mathcal{X}}
\newcommand{\catY}{\mathcal{Y}}

\DeclareMathOperator{\Hom}{Hom}
\DeclareMathOperator{\End}{End}

\DeclareMathOperator{\Ima}{Im}
\DeclareMathOperator{\Ker}{Ker}
\DeclareMathOperator{\Cok}{Cok}

\DeclareMathOperator{\Mod}{\mathsf{Mod}}
\DeclareMathOperator{\catmod}{\mathsf{mod}}

\DeclareMathOperator{\proj}{\mathsf{proj}}

\DeclareMathOperator{\simp}{\mathsf{sim}}

\DeclareMathOperator{\noeth}{\mathsf{noeth}}


\DeclareMathOperator{\Spec}{Spec}

\DeclareMathOperator{\Ann}{Ann}
\newcommand{\mm}{\mathfrak{m}}

\newcommand{\pp}{\mathfrak{p}}


\DeclareMathOperator{\Qcoh}{\mathsf{Qcoh}}

\DeclareMathOperator{\coh}{\mathsf{coh}}
\DeclareMathOperator{\tor}{\mathsf{tor}}
\DeclareMathOperator{\vect}{\mathsf{vect}}

\newcommand{\shF}{\mathcal{F}}
\newcommand{\shG}{\mathcal{G}}
\newcommand{\shH}{\mathcal{H}}
\newcommand{\shI}{\mathcal{I}}

\newcommand{\shL}{\mathcal{L}}
\newcommand{\shM}{\mathcal{M}}
\newcommand{\shN}{\mathcal{N}}
\newcommand{\shO}{\mathcal{O}}

\newcommand{\shT}{\mathcal{T}}
\newcommand{\shU}{\mathcal{U}}
\newcommand{\shV}{\mathcal{V}}

\newcommand{\shHom}{\mathcal{H}om}

\DeclareMathOperator{\Div}{Div}

\DeclareMathOperator{\Pic}{Pic}


\newcommand{\arr}[1]{\arrow[{#1}]}
\newenvironment{bsmatrix}{\left[\begin{smallmatrix}}{\end{smallmatrix}\right]}
\newenvironment{enur}{\begin{enumerate}[label={\upshape(\roman*)}]}{\end{enumerate}}
\newenvironment{enua}{\begin{enumerate}[label={\upshape(\arabic*)}]}{\end{enumerate}}

\tikzset{
  symbol/.style={
    draw=none,
    every to/.append style={
      edge node={node [sloped, allow upside down, auto=false]{$#1$}}}
  }
}

\makeatletter
\newcommand*{\da@rightarrow}{\mathchar"0\hexnumber@\symAMSa 4B }
\newcommand*{\da@leftarrow}{\mathchar"0\hexnumber@\symAMSa 4C }
\newcommand*{\xdashrightarrow}[2][]{%
  \mathrel{%
    \mathpalette{\da@xarrow{#1}{#2}{}\da@rightarrow{\,}{}}{}%
  }%
}
\newcommand{\xdashleftarrow}[2][]{%
  \mathrel{%
    \mathpalette{\da@xarrow{#1}{#2}\da@leftarrow{}{}{\,}}{}%
  }%
}
\newcommand*{\da@xarrow}[7]{%
  \sbox0{$\ifx#7\scriptstyle\scriptscriptstyle\else\scriptstyle\fi#5#1#6\m@th$}%
  \sbox2{$\ifx#7\scriptstyle\scriptscriptstyle\else\scriptstyle\fi#5#2#6\m@th$}%
  \sbox4{$#7\dabar@\m@th$}%
  \dimen@=\wd0 %
  \ifdim\wd2 >\dimen@
    \dimen@=\wd2 %
  \fi
  \count@=2 %
  \def\da@bars{\dabar@\dabar@}%
  \@whiledim\count@\wd4<\dimen@\do{%
    \advance\count@\@ne
    \expandafter\def\expandafter\da@bars\expandafter{%
      \da@bars
      \dabar@ 
    }%
  }%
  \mathrel{#3}%
  \mathrel{%
    \mathop{\da@bars}\limits
    \ifx\\#1\\%
    \else
      _{\copy0}%
    \fi
    \ifx\\#2\\%
    \else
      ^{\copy2}%
    \fi
  }%
  \mathrel{#4}%
}

\newcommand{\colim@}[2]{%
  \vtop{\m@th\ialign{##\cr
    \hfil$#1\operator@font colim$\hfil\cr
    \noalign{\nointerlineskip\kern1.5\ex@}#2\cr
    \noalign{\nointerlineskip\kern-\ex@}\cr}}%
}
\newcommand{\colim}{%
  \mathop{\mathpalette\colim@{\rightarrowfill@\scriptscriptstyle}}\nmlimits@
}
\newcommand{\plim}{%
  \mathop{\mathpalette\varlim@{\leftarrowfill@\scriptscriptstyle}}\nmlimits@
}
\makeatother
\newcommand{\red}[1]{{\color{magenta}#1}} 
\begin{document}

\title{The spectrum of Grothendieck monoid: \\%
classifying Serre subcategories and reconstruction theorem}

\author{Shunya Saito}
\address{Graduate School of Mathematics, Nagoya University, Chikusa-ku, Nagoya. 464-8602, Japan}
\email{m19018i@math.nagoya-u.ac.jp}

\subjclass[2020]{18E10, 16G10, 14H60}
\keywords{Grothendieck monoid; exact categories; reconstruction theorem; vector bundles.}

\begin{abstract}
The Grothendieck monoid of an exact category is a monoid version of the Grothendieck group.
We use it to classify Serre subcategories of an exact category
and to reconstruct the topology of a noetherian scheme.
We first construct bijections between 
(i) the set of Serre subcategories of an exact category,
(ii) the set of faces of its Grothendieck monoid,
and (iii) the monoid spectrum of its Grothendieck monoid.
By using (ii), we classify Serre subcategories of exact categories 
related to a finite dimensional algebra and a smooth projective curve.
For this,
we determine the Grothendieck monoid of the category of coherent sheaves on 
a smooth projective curve.
By using (iii), we introduce a topology on the set of Serre subcategories.
As a consequence, we recover the topology of a noetherian scheme
from the Grothendieck monoid.
\end{abstract}

\maketitle
\tableofcontents
\section{Introduction}\label{s:Intro}
\subsection{Background}
Classifying nice subcategories of an abelian category or a triangulated category is
quite an active subject which has been studied in various areas of mathematics
such as representation theory and algebraic geometry.
A typical example is the following result given by Gabriel:
\begin{fct}[{\cite[Proposition VI.2.4]{Gab62}}]\label{fct:Gab 1}
Let $X$ be a noetherian scheme.
There are inclusion-preserving bijections between the following sets:
\begin{itemize}
\item
The set of Serre subcategories $\catS$ of the category $\coh X$ of coherent sheaves on $X$.
\item
The set of specialization-closed subsets $Z$ of $X$.
\end{itemize}
Here the assignments are given by
\[
\catS \mapsto \Supp\catS :=\bigcup_{\shF \in\catS}\Supp \shF, \quad
Z \mapsto \coh_Z X:=\{\shF \in \coh X \mid \Supp \shF \subseteq Z\}.
\]
\end{fct}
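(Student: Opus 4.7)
The plan is to verify that both assignments are well-defined and mutually inverse; inclusion-preservation is immediate from the formulas. For well-definedness, recall that on a noetherian scheme the support $\Supp\shF$ of every coherent sheaf is closed (locally the vanishing locus of the annihilator ideal), so $\Supp\catS$, being a union of closed subsets, is specialization-closed. Conversely, $\coh_Z X$ is a Serre subcategory because in any short exact sequence $0\to\shF'\to\shF\to\shF''\to 0$ of coherent sheaves one has $\Supp\shF=\Supp\shF'\cup\Supp\shF''$, as checked stalkwise from exactness of $0\to\shF'_x\to\shF_x\to\shF''_x\to 0$.

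For the composition $Z\mapsto\coh_Z X\mapsto \Supp(\coh_Z X)$, only the inclusion $Z\subseteq \Supp(\coh_Z X)$ requires argument. Given $x\in Z$, set $Y=\overline{\{x\}}$ with closed immersion $i_Y\colon Y\hookrightarrow X$; since $Z$ is specialization-closed, $Y\subseteq Z$, so $i_{Y*}\shO_Y$ lies in $\coh_Z X$ and has $x$ in its support.

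The harder direction is $\coh_{\Supp\catS}X\subseteq\catS$ for a Serre subcategory $\catS$. The key input is the standard filtration lemma for noetherian schemes: every coherent sheaf $\shF$ admits a finite filtration $0=\shF_0\subsetneq \shF_1\subsetneq\cdots\subsetneq \shF_n=\shF$ whose successive quotients have the form $i_{Z_k*}\shO_{Z_k}$ with $Z_k$ integral closed subschemes contained in $\Supp\shF$. Because $\catS$ is closed under subobjects, quotients, and extensions, an easy induction on $n$ reduces the problem to the following claim, which I expect to be the main obstacle: \emph{if the generic point $\eta$ of an integral closed subscheme $Y\subseteq X$ lies in $\Supp\catS$, then $i_{Y*}\shO_Y\in\catS$.}

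To prove the claim, choose a witness $\shG\in\catS$ with $\eta\in\Supp\shG$ and apply the filtration lemma to $\shG$. Since $\shG_\eta\neq 0$, at least one successive quotient $i_{Z*}\shO_Z$ of $\shG$ must have nonzero stalk at $\eta$, equivalently $\eta\in Z$, which means $Y\subseteq Z$. As a subquotient of $\shG\in\catS$, this $i_{Z*}\shO_Z$ belongs to $\catS$. Finally, let $j\colon Y\hookrightarrow Z$ be the inclusion and $\shI\subseteq\shO_Z$ the ideal sheaf of $Y$ in $Z$; then $i_{Y*}\shO_Y = i_{Z*}(\shO_Z/\shI)$ is a quotient of $i_{Z*}\shO_Z$ and hence an object of $\catS$. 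The only non-routine ingredient is the filtration lemma; the rest is the exact-category bookkeeping sketched above.
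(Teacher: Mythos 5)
The paper records this statement as a Fact cited from Gabriel and supplies no proof, so there is nothing internal to compare against; I will assess your argument on its own terms. Your overall architecture (well-definedness, the easy composite $Z\subseteq\Supp(\coh_Z X)$, and the reduction of $\coh_{\Supp\catS}X\subseteq\catS$ to structure sheaves of integral closed subschemes via d\'evissage) is the standard route and is sound. The genuine gap is the filtration lemma you invoke: on a general noetherian scheme it is \emph{false} that every coherent sheaf admits a finite filtration whose successive quotients have the form $i_{Z_k*}\shO_{Z_k}$ with $Z_k$ integral. For instance $\shO_{\bbP^1}(-1)$ admits no such filtration: the first step $\shF_1\iso i_{Z_1*}\shO_{Z_1}$ would be a nonzero subsheaf of the torsion-free rank-one sheaf $\shO_{\bbP^1}(-1)$, forcing $Z_1=\bbP^1$ and hence an injection $\shO_{\bbP^1}\inj\shO_{\bbP^1}(-1)$, yet $\Hom(\shO_{\bbP^1},\shO_{\bbP^1}(-1))=H^0(\bbP^1,\shO_{\bbP^1}(-1))=0$. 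The correct d\'evissage lemma (see \cite{SP}) produces successive quotients $i_{Z*}\shI$ with $\shI$ a nonzero coherent \emph{ideal sheaf} on the integral closed subscheme $Z$.

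This correction leaves your reduction step intact: since $i_{Z*}\shI\subseteq i_{Z*}\shO_Z$ and $\catS$ is closed under subobjects, it still suffices to show $i_{Z*}\shO_Z\in\catS$ for every integral closed $Z\subseteq\Supp\shF$. But it breaks your proof of the key claim: from the filtration of the witness $\shG$ you only extract $i_{Z*}\shI\in\catS$ for some nonzero ideal sheaf $\shI\subseteq\shO_Z$ with $Y\subseteq Z$, and $i_{Y*}\shO_Y$ is in general \emph{not} a subquotient of $i_{Z*}\shI$: the composite $\shI\inj\shO_Z\surj\shO_Z/\shI_Y$ has image $(\shI+\shI_Y)/\shI_Y$, which is a proper subsheaf whenever $V(\shI)\cap Y\neq\emptyset$, and in the example above $\shO_{\bbP^1}$ is genuinely not a subquotient of $\shO_{\bbP^1}(-1)$. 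The standard repair is a noetherian induction on the closed subset $|Z|$: having produced $i_{Z*}\shI\in\catS$, use the conflation $0\to\shI\to\shO_Z\to\shO_Z/\shI\to0$ and observe that $\shO_Z/\shI$ is supported on the proper closed subset $V(\shI)\subsetneq Z$, which still lies in $\Supp\catS$; its d\'evissage subquotients live on strictly smaller integral closed subschemes, so the induction hypothesis places $\shO_Z/\shI$ in $\catS$, and extension-closure then yields $i_{Z*}\shO_Z\in\catS$. With the corrected d\'evissage statement and this additional induction, your argument goes through.
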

In \cite{Gab62}, Gabriel also proved that any noetherian scheme $X$ can be reconstructed
from the category $\Qcoh X$ of quasi-coherent sheaves on $X$.
These results have been generalized in several ways.
See \cite{Zi84,He97,Ka12} for classifications of Serre subcategories
and \cite{Ro98,BO01,Balmer,BKS07,GP08,Br16} for reconstruction theorems.
The present paper sheds new light on these results by using \emph{Grothendieck monoids}.

The Grothendieck monoid $\M(\catE)$ is a monoid version of the Grothendieck group,
which is defined for each exact category $\catE$.
Several authors studied the Grothendieck monoid and extract information
that the Grothendieck group does not contain.
See \cite{Br97,Br98,Br03} for module categories,
\cite{En22} for exact categories related to finite dimensional algebras,
and \cite{BG16} for an application to Ringel-Hall algebras.
This paper aims to study Serre subcategories of an exact category
via its Grothendieck monoid.
\subsection{Main results}
Let us introduce the main subject in this paper.
An additive subcategory $\catS$ of an exact category $\catE$ is \emph{Serre} 
if for any conflation $0\to X \to Y\to Z\to 0$ in $\catE$,
we have that $Y\in\catS$ if and only if both $X\in\catS$ and $Z\in\catS$.
The reader unfamiliar with the language of exact categories
can think of exact categories and conflations 
as abelian categories and short exact sequences, respectively.
The following result is the starting point of this paper.
\begin{thma}[{Proposition \ref{prp:Serre face prime}}]\label{thm:A}
For an exact category $\catE$,
there are bijections between the following sets:
\begin{enua}
\item
The set $\Serre(\catE)$ of Serre subcategories of $\catE$.
\item
The set $\Face(\M(\catE))$ of faces of the Grothendieck monoid $\M(\catE)$.
\item
The set $\MSpec \M(\catE)$ of prime ideals of the Grothendieck monoid $\M(\catE)$.
\end{enua}
\end{thma}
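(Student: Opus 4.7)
The plan is to construct the three bijections by exhibiting explicit mutually inverse maps. The passage between faces of a commutative monoid and prime ideals (via complementation) is purely formal, so the substantive content lies in the bijection between Serre subcategories of $\catE$ and faces of $\M(\catE)$.

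For that bijection I would use the natural candidates
\[
\catS \longmapsto F_\catS := \{[X] \in \M(\catE) \mid X \in \catS\}, \qquad
F \longmapsto \catS_F := \{X \in \catE \mid [X] \in F\}.
\]
That $\catS_F$ is Serre is essentially tautological: for a conflation $0 \to X \to Y \to Z \to 0$ one has $[Y] = [X] + [Z]$ in $\M(\catE)$, so the face property for $F$ converts directly into the defining axiom of a Serre subcategory. The key technical step is the following \emph{saturation lemma}: if $\catS$ is Serre and $[X_1] + \cdots + [X_m] = [Y_1] + \cdots + [Y_n]$ in $\M(\catE)$ with every $Y_j \in \catS$, then every $X_i \in \catS$. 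To prove it I would present $\M(\catE)$ as the quotient of the free commutative monoid on isomorphism classes of $\catE$ by the elementary relations $[B] = [A] + [C]$ coming from conflations $0 \to A \to B \to C \to 0$. Such an identity is then witnessed by a finite zigzag of these elementary substitutions on multisets, and at each substitution the Serre axiom preserves the property ``every entry of the multiset lies in $\catS$''; propagating this across the zigzag yields the lemma.

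Granting the saturation lemma, the remaining verifications are short. The face property of $F_\catS$ follows because $[X]+[Y]=[S]$ with $S \in \catS$ forces $X,Y \in \catS$, and closure under addition uses the split-conflation identity $[X]+[Y]=[X\oplus Y]$. The equality $\catS_{F_\catS}=\catS$ is the lemma applied to $[X]=[S]$, while $F_{\catS_F}=F$ follows because every element of $\M(\catE)$ has the form $[X]$ for a single object $X$ (again by the direct sum trick). For the bijection (2)--(3), a routine check of the axioms shows that a subset of a commutative monoid is a face if and only if its complement is a prime ideal, and the two complementation maps are mutually inverse. The main obstacle in the whole argument is the saturation lemma: one must record a usable combinatorial description of equality in $\M(\catE)$, since $[X]=[Y]$ does not generally imply $X \iso Y$. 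Once the zigzag presentation is in hand, both bijections drop out with essentially no further work.
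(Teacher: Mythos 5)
Your proof is correct, and the maps you use are exactly the ones in the paper ($\catS \mapsto \M_{\catS}$ and $F \mapsto \catD_F$ in the paper's notation, with the face/prime-ideal correspondence by complementation quoted from Ogus as Fact \ref{fct:bij face prime}). Where you diverge is in how the key saturation step is justified. The paper does not go back to the presentation of $\M(\catE)$: it imports from Enomoto the fact (Fact \ref{fct:=}) that $[X]=[Y]$ holds iff $X$ and $Y$ are connected by a finite chain of S-equivalences (isomorphisms of admissible subobject series), proves that a Serre subcategory is closed under S-equivalences and under direct summands (Lemma \ref{lem:Serre S-closed}), and first sets up a bijection between \emph{all} subsets of $\M(\catE)$ and S-closed subcategories (Proposition \ref{prp:S-closed bij}), which is then restricted. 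Your route instead returns to the generators-and-relations construction of $\M(\catE)$ and observes that the Serre axiom is precisely the statement that each elementary rewriting move preserves the predicate ``every entry of the multiset lies in $\catS$''; this single saturation lemma subsumes both closure under S-equivalence and closure under direct summands. The two arguments rest on the same combinatorial fact about congruences on free commutative monoids (your zigzag is what underlies Fact \ref{fct:=}), so neither is deeper, but yours is more self-contained, while the paper's intermediate notion of S-closed subcategory pays for itself later (e.g.\ in the proof of Proposition \ref{prp:top on Serre}). One small point to record in your write-up: the presentation of $\M(\catE)$ must also include the relation identifying $[0]$ with the empty multiset, since it does not follow from the conflation relations alone; this extra move also preserves your invariant because $0\in\catS$.
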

Let us explain the terminologies used above.
Let $M$ be a commutative monoid.
A non-empty subset $F$ of $M$ is a \emph{face}
if for all $x,y\in M$, we have that $x+y\in F$ if and only if both $x\in F$ and $y\in F$.
A subset $\pp$ of $M$ is a \emph{prime ideal}
if $\pp^c:=M\setminus \pp$ is a face of $M$.

The second set $\Face(\M(\catE))$ can be computed purely algebraically,
and its computation is much easier than 
examining the whole structure of the exact category $\catE$.
The third set $\MSpec \M(\catE)$ has a topology,
which is a natural analogy of the Zariski topology
on the spectrum $\Spec R$ of a commutative ring $R$.
These lead us in two directions.

The one direction is a classification of Serre subcategories
by using faces of the Grothendieck monoid.
We propose the following strategy to classify Serre subcategories of an exact category $\catE$.
\begin{enua}
\item
Relate the Grothendieck monoid $\M(\catE)$ with an abstract monoid $M$.
\item
Classify faces of the abstract monoid $M$.
\item
Classify Serre subcategories of $\catE$ by using (1) and (2).
\end{enua}
Following this strategy, we classify Serre subcategories of some explicit exact categories.
We present a primitive example to illustrate this idea.

\begin{ex}
Consider the category $\vect \bbk$ of finite dimensional vector spaces over a field $\bbk$.
We will classify extension-closed subcategories of $\vect \bbk$ by using a monoid.
Here a subcategory $\catX$ of $\vect \bbk$ is \emph{extension-closed} if
it contains a zero object and for any exact sequence $0\to U\to V\to W\to 0$,
the condition $U,W \in\catX$ implies $V\in\catX$.
Let $\abs{\vect \bbk}$ be the set of isomorphism classes of objects of $\vect \bbk$.
The assignment $V \mapsto \dim_{\bbk} V$,
where $\dim_{\bbk} V$ denotes the dimension of $V$ over $\bbk$,
gives a bijection $\abs{\vect \bbk} \isoto \bbN$.
Here $\bbN$ is the set of non-negative integers.
We identify the (full) subcategories of $\vect \bbk$ (closed under isomorphisms)
with the subsets of $\bbN$ by this bijection.
For vector spaces $U$, $V$ and $W$,
there is an exact sequence $0\to U\to V\to W\to 0$ 
if and only if $\dim_{\bbk}V= \dim_{\bbk}U + \dim_{\bbk}W$.
Thus a subcategory of $\vect \bbk$ is extension-closed
if and only if the corresponding subset of $\bbN$ is a submonoid.
Therefore, classifying extension-closed subcategories of $\vect \bbk$ 
and submonoids of $\bbN$ are equivalent. 
This example shows why and how monoids are used to classify subcategories.
We generalize this story to any exact category in \S \ref{ss:Serre face}.
At the moment, we only mention that $\M(\vect \bbk) \iso \bbN$.
\end{ex}

In general, computation of the Grothendieck monoid $\M(\catE)$ is difficult
even if $\catE$ is an extension-closed subcategory of the category $\catmod\LL$
of finitely generated modules over a finite dimensional algebra $\LL$ (cf. \cite[Section 7]{En22}).
However, we determine the Grothendieck monoids
of the following exact categories related to a smooth projective curve $C$:
\begin{itemize}
\item
The category $\coh C$ of coherent sheaves on $C$,
\item
The category $\vect C$ of vector bundles over $C$,
\item 
The category $\tor C$ of coherent torsion sheaves on $C$.
\end{itemize}

\begin{thma}[{Proposition \ref{prp:M tor}, \ref{prp:M vect} and \ref{prp:M coh}}]\label{thm:B}
Let $C$ be a smooth projective curve over an algebraically closed field $\bbk$.
\begin{enua}
\item
$\M(\tor C)\iso\Div^+ C$ holds,
where $\Div^+ C$ is the monoid of effective divisors on $C$.
\item
$\M(\vect C)\iso \left(\Pic C \times \bbN^+\right) \cup\{(\shO_C,0)\}
\subseteq \Pic C \times \bbZ$ holds,
where $\Pic(C)$ is the Picard group of $C$ 
and $\bbN^{+}$ is the semigroup of strictly positive integers.
\item
We can regard $\M(\tor C)$ and $\M(\vect C)$ as submonoids of $\M(\coh C)$.
Then $\M(\coh C)$ is the disjoint union of 
$\M(\tor C)$ and $\M(\vect C)^+:=\M(\vect C)\setminus \{0\}$.
See Corollary \ref{cor:M coh only mon} for the complete description
of $\M(\coh C)$ as a monoid.
\end{enua}
\end{thma}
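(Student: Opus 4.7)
The plan is to handle the three parts largely independently, with the common thread being the canonical torsion / torsion-free splitting on a smooth curve.

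For (1), observe that $\tor C$ is a length abelian category whose simple objects are exactly the skyscraper sheaves $\shO_x$ at closed points $x \in C$: any torsion sheaf splits as a finite direct sum $\bigoplus_x F_x$ of sheaves supported at distinct points, and each stalk $F_x$ is a finite length module over the discrete valuation ring $\shO_{C,x}$ whose composition factors are all $\shO_x$. Since the Grothendieck monoid of a length abelian category is free on the set of isomorphism classes of simples, this gives $\M(\tor C) \iso \bigoplus_{x \in C} \bbN \cdot x = \Div^+ C$ via $[F] \mapsto \sum_x \lng_{\shO_{C,x}}(F_x) \cdot x$.

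For (2), define $\bar\phi \colon \M(\vect C) \to (\Pic C \times \bbN^+) \cup \{(\shO_C, 0)\}$ by $[V] \mapsto (\det V, \rk V)$; this is a well-defined monoid homomorphism because $\det$ is multiplicative and $\rk$ is additive on conflations in $\vect C$. Its candidate inverse is $\psi(L, n) := [L \oplus \shO_C^{n-1}]$ for $n \geq 1$ and $\psi(\shO_C, 0) := 0$, and $\bar\phi \circ \psi = \id$ is immediate. The substance is the identity
\[
[V] = [\det V] + (\rk V - 1)[\shO_C] \qquad \text{in } \M(\vect C)
\]
for every $V$ of rank $r \geq 1$ (the case $r = 1$ is trivial). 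To prove it, pick $N \in \Pic C$ ample enough that $V \otimes N$ is globally generated and $h^0(V \otimes N) \geq r$ (Serre vanishing plus Riemann-Roch). A standard dimension count shows that a generic $(r-1)$-tuple of global sections of $V \otimes N$ has values that are linearly independent at every closed point: for each fixed $x \in C$ the ``linearly dependent at $x$'' condition cuts out codimension $2$ in $H^0(V \otimes N)^{r-1}$ (standard determinantal rank stratum), and integrating over the one-dimensional base $C$ leaves a locus of codimension at least $1$. Such a tuple gives a subbundle inclusion $\shO_C^{r-1} \hookrightarrow V \otimes N$ with line-bundle quotient $\det V \otimes N^r$, hence a conflation $0 \to \shO_C^{r-1} \to V \otimes N \to \det V \otimes N^r \to 0$ in $\vect C$ and the equality $[V \otimes N] = (r-1)[\shO_C] + [\det V \otimes N^r]$. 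Applying the monoid automorphism induced by tensoring with $N^{-1}$ gives $[V] = (r-1)[N^{-1}] + [\det V \otimes N^{r-1}]$. Running the same argument with $V$ replaced by $V_0 := \det V \oplus \shO_C^{r-1}$ (which has the same rank and determinant) produces the identical right-hand side; but $V_0$ splits as a direct sum, so $[V_0] = [\det V] + (r-1)[\shO_C]$ directly, and the desired identity follows.

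For (3), the structural input is that on a smooth curve every coherent sheaf $F$ splits canonically as $F = T \oplus V$ with $T \in \tor C$ and $V \in \vect C$ (since $\Ext^1(V, T) = 0$ for dimension reasons), so $[F] = [T] + [V]$ in $\M(\coh C)$. When $V \neq 0$ I absorb the torsion into the vector bundle by Hecke modifications: for a skyscraper $\shO_x$ and any nonzero $W \in \vect C$, a nonzero class in $\Ext^1(\shO_x, W) \iso W|_x$ produces a conflation $0 \to W \to W' \to \shO_x \to 0$ with $W'$ still a vector bundle (same rank, determinant multiplied by $\shO_C(x)$). Iterating over a composition series of $T$ yields $[F] = [V'']$ for some $V'' \in \vect C$ of positive rank. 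Thus every class in $\M(\coh C)$ lies in the image of $\M(\tor C)$ (when $V = 0$) or of $\M(\vect C)^+$ (otherwise), and the two images are disjoint because the rank map $\M(\coh C) \to \bbN$ separates them. To show that both maps embed injectively and to pin down the monoid structure, I would construct an explicit monoid homomorphism $\M(\coh C) \to \Div^+ C \sqcup (\Pic C \times \bbN^+)$ sending a torsion sheaf to its length divisor and a sheaf of positive rank to $(\det, \rk)$, verify additivity on conflations by case analysis on the ranks of the three terms, and check that it is inverse to the composite just described, using (1), (2), and the Hecke construction.

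The main obstacle is the generic-section step in (2): one needs the $r-1$ global sections of $V \otimes N$ to remain pointwise linearly independent over the entirety of $C$ simultaneously, so that the induced map is a subbundle inclusion rather than only a generically injective map of sheaves. Controlling the degeneracy locus in a one-parameter family, rather than at a single fibre, is precisely what the codimension count accomplishes.
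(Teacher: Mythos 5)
Your proposal is correct, and while part (1) matches the paper's proof exactly (identify the simples of the length category $\tor C$ as the skyscraper sheaves and invoke freeness of the Grothendieck monoid of a length abelian category), parts (2) and (3) diverge in genuinely interesting ways. In part (2) the paper uses the same two inputs you do -- Serre's global generation after an ample twist, and the existence of a corank-one trivial subbundle in a globally generated bundle -- but it cites the latter as Atiyah's theorem, whereas you re-derive the special case on a curve by the determinantal codimension count (which is sound: the rank-drop locus has codimension $2$ fibrewise, so sweeping over the one-dimensional $C$ leaves room for a good tuple over the infinite field $\bbk$). Where the paper proves injectivity of $(\det,\rk)$ by comparing two bundles with equal invariants through a common trivial subbundle, you instead establish the normal form $[V]=[\det V]+(\rk V-1)[\shO_C]$ and exhibit the explicit inverse $(\shL,n)\mapsto[\shL\oplus\shO_C^{\oplus(n-1)}]$; your device of running the argument a second time on $\det V\oplus\shO_C^{\oplus(r-1)}$ to trade $[N^{-1}]$ for $[\shO_C]$ is exactly what is needed and gives a slightly sharper statement. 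The real difference is in part (3): the paper obtains injectivity of $\M(\vect C)\to\M(\coh C)$ by factoring through $\K_0(\vect C)\isoto\K_0(\coh C)$ together with cancellativity of $\M(\vect C)$, obtains injectivity of $\M(\tor C)\to\M(\coh C)$ from the general fact that Serre subcategories induce injections on Grothendieck monoids (Proposition \ref{prp:inj Serre}), and absorbs torsion into a line bundle via the twisted sequence $0\to\shL\to\shL\otimes\shO_C(D)\to\shO_D\to0$; you absorb torsion by iterated Hecke modifications (correct: a nonzero class in $\Ext^1(\shO_x,W)$ forces the middle term to be torsion-free, hence locally free on a smooth curve) and propose to get both injectivities at once by writing down an additive function $\abs{\coh C}\to\Div^+ C\sqcup_{\sigma}(\Pic C\times\bbN^+)$ retracting your surjection. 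That plan is self-contained and bypasses both the $\K_0$ comparison and Proposition \ref{prp:inj Serre}, which is a genuine simplification of the dependency graph, but it is also the one place where work remains to be executed: you must define $\det$ for coherent sheaves with torsion (e.g.\ $\det\shF=\det\shF_{\mathrm{tor}}\otimes\det\shF_{\mathrm{vect}}$ with $\det$ of a torsion sheaf of length divisor $D$ equal to $\shO_C(D)$), carry out the rank case analysis on conflations, and make explicit that the target must carry the twisted monoid structure $\sigma_D(\shL,r)=(\shL\otimes\shO_C(D),r)$ of Corollary \ref{cor:M coh only mon}, since additivity fails for the untwisted disjoint union.
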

See Example \ref{ex:compare M K_0} for the comparison of 
the Grothendieck group $\K_0(\coh C)$ and the Grothendieck monoid $\M(\coh C)$.
As a corollary of this theorem, we have the following:
\begin{thma}[{Corollary \ref{cor:Serre vect}}]\label{thm:C}
Let $C$ be a smooth projective curve over an algebraically closed field $\bbk$.
Then $\vect C$ has no nontrivial Serre subcategories.
\end{thma}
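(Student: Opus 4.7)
The plan is to reduce the problem to a purely monoid-theoretic statement via Theorem A, and then check it by direct inspection of the explicit description of $\M(\vect C)$ provided by Theorem B(2).

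By Theorem A, Serre subcategories of $\vect C$ are in bijection with faces of $\M(\vect C)$. The trivial Serre subcategories $0$ and $\vect C$ correspond to the trivial faces $\{0\}$ and $\M(\vect C)$, so it suffices to show that $M := \M(\vect C) = \{(\shO_C, 0)\} \cup (\Pic C \times \bbN^+)$ admits no faces other than these two. Here addition is componentwise, using the tensor product on $\Pic C$. From the definition, any face $F$ contains $0 = (\shO_C, 0)$ (since $0 = 0 + 0$ and $F$ is nonempty), so we need only prove that any face $F$ containing some element $(L,n)$ with $n \geq 1$ must equal $M$.

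Fix such an element $(L, n) \in F$ with $n \geq 1$, and let $(L', n') \in \Pic C \times \bbN^+$ be arbitrary. The key step is to realize $(L', n')$ as a summand of a multiple of $(L, n)$. Choose an integer $k$ large enough that $kn > n'$, and set $n'' := kn - n' \geq 1$ and $L'' := L^{\otimes k} \otimes (L')^{-1} \in \Pic C$. Then $(L'', n'') \in \Pic C \times \bbN^+ \subseteq M$, and
\[
(L', n') + (L'', n'') = (L^{\otimes k}, kn) = k \cdot (L, n) \in F.
\]
Since $F$ is a face, both summands lie in $F$, so in particular $(L', n') \in F$. As $(L', n')$ was arbitrary in $\Pic C \times \bbN^+$, we conclude $F = M$.

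There is no real obstacle here beyond invoking Theorems A and B(2); the content lies entirely in recognizing that the face condition, combined with the fact that every positive-rank element generates arbitrary large ranks, forces divisibility of any nonzero face onto every element. The only mild point to check is that the element $(L'', n'')$ constructed above genuinely belongs to $M$, which is immediate from $n'' \geq 1$.
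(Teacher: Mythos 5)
Your proof is correct and follows essentially the same route as the paper: reduce via the Serre--face bijection to showing that $M=(\Pic C\times\bbN^+)\cup\{(\shO_C,0)\}$ has no nontrivial faces, then exhibit an arbitrary nonzero element as a summand of a multiple of a given nonzero element of the face. The only (immaterial) difference is the decomposition used: the paper first extracts $(\shO_C,1)\in F$ from $2(\shL,r)=(\shL^{\otimes 2},2r-1)+(\shO_C,1)$ and then writes $(\shM,s)+(\shM^{\vee},s)=2s(\shO_C,1)$, whereas you split $k(L,n)$ directly as $(L',n')+(L'',n'')$ in one step.
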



The other direction is a study of the space $\Serre(\catE)$
whose topology is induced by the topology on $\MSpec \M(\catE)$.
We can recover the topology of a noetherian scheme $X$ from the topological space $\Serre(\coh X)$,
and obtain the following result.
\begin{thma}[{Theorem \ref{thm:recover}}]\label{thm:D}
Consider the following conditions for noetherian schemes $X$ and $Y$.
\begin{enua}
\item
$X \iso Y$ as schemes.
\item
$\M(\coh X) \iso \M(\coh Y)$ as monoids.
\item
$X \iso Y$ as topological spaces.
\end{enua}
Then ``$(1)\imply (2) \imply (3)$'' hold.
\end{thma}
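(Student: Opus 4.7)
The plan is to establish the two implications by separate methods: $(1) \imply (2)$ is essentially functorial, while $(2) \imply (3)$ requires recovering the topology of $X$ from poset-theoretic data extracted via Theorem \ref{thm:A} and Gabriel's theorem.

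For $(1) \imply (2)$, an isomorphism of schemes $f \colon X \isoto Y$ yields an exact equivalence $f^{*} \colon \coh Y \simto \coh X$ with quasi-inverse $f_{*}$. The construction $\catE \mapsto \M(\catE)$ is manifestly functorial for exact functors and carries exact equivalences to monoid isomorphisms (it is built from isomorphism classes of objects modulo the relation generated by conflations, both of which are preserved under exact equivalences). Consequently $\M(\coh X) \iso \M(\coh Y)$.

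For $(2) \imply (3)$, the first step is to apply Theorem \ref{thm:A} to both $\coh X$ and $\coh Y$: a monoid isomorphism $\M(\coh X) \iso \M(\coh Y)$ sends faces to faces, yielding an inclusion-preserving bijection $\Serre(\coh X) \iso \Serre(\coh Y)$. Composing with the inclusion-preserving bijections of Gabriel's theorem (Fact \ref{fct:Gab 1}) produces an order isomorphism $\vphi \colon \Spcl(X) \isoto \Spcl(Y)$ between the posets of specialization-closed subsets.

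The bulk of the work is to recover $X$ as a topological space from $\Spcl(X)$, treated purely as a poset under inclusion. Since schemes are sober and $X$ is noetherian, I propose the following intrinsic recovery: \textbf{(a)} the points of $X$, via $x \mapsto \overline{\{x\}}$, correspond to the join-irreducible elements of $\Spcl(X)$, i.e.\ those nonempty $Z$ such that $Z = Z_{1} \cup Z_{2}$ with $Z_{i} \in \Spcl(X)$ forces $Z = Z_{1}$ or $Z = Z_{2}$; \textbf{(b)} the specialization order on $X$ is recovered as inclusion among these join-irreducibles; \textbf{(c)} the closed subsets of $X$ are precisely the quasi-compact elements of $\Spcl(X)$, i.e.\ those $Z$ admitting a finite subcover from every union decomposition $Z = \bigcup_{i \in I} Z_{i}$ in $\Spcl(X)$. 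All three conditions are poset-intrinsic, so $\vphi$ transports them to the analogues on $Y$ and produces the desired homeomorphism. The main obstacle I anticipate is verifying (c): this is where noetherianness of $X$ is essential, as one must show that closed subsets of a noetherian space are precisely those specialization-closed subsets arising as finite unions of irreducible components, and that this matches the order-theoretic compactness condition (the forward direction going through the finiteness of maximal generic points, the reverse through the canonical cover of any $Z \in \Spcl(X)$ by the closures $\overline{\{z\}}$ with $z \in Z$). Once (a)--(c) are established, transport of topology through $\vphi$ is purely formal.
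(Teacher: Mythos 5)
Your proof of $(1)\imply(2)$ coincides with the paper's (which treats it as immediate from the exact equivalence $f^*$). For $(2)\imply(3)$ you take a genuinely different route. The paper never reduces to the bare poset $\Spcl(X)$: it equips $\Serre(\coh X)$ with the topology transported from $\MSpec\M(\coh X)$, embeds $X$ as the \emph{subspace} of meet-irreducible Serre subcategories $\coh^x X=\{\shF\mid \shF_x=0\}$ (Lemma \ref{lem:emb X}, Proposition \ref{prp:meet-irred}), and observes that any homeomorphism of these spaces preserves meet-irreducibility because the specialization order on $\Serre(\coh X)$ is the inclusion order (Proposition \ref{prp:top on Serre}). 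You instead discard the topology, keep only the order isomorphism $\Spcl(X)\iso\Spcl(Y)$, and reconstruct $\abs{X}$ order-theoretically (points $=$ join-irreducibles, closed sets $=$ compact elements). If carried out, your argument proves the strictly stronger statement that the \emph{poset} $\Serre(\coh X)$ determines $\abs{X}$ --- precisely the implication the paper attributes to Buan--Krause--Solberg via Hochster duality and says its own method does not reach. So your route is legitimate but is essentially a hands-on version of the \cite{BKS07} argument; what the paper's route buys is that it never needs such a duality, because $X$ sits inside $\Serre(\coh X)$ with the correct topology from the start.

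There is, however, a gap, and you have located the difficulty in the wrong place. Step (c) is the routine one: a compact element of $\Spcl(X)$ is a finite union of point closures, hence closed, and conversely a closed subset of a noetherian sober space has finitely many irreducible components whose generic points select a finite subcover from any cover by specialization-closed sets. The delicate step is the surjectivity half of (a): that every join-irreducible $Z\in\Spcl(X)$ equals $\ol{\{x\}}$ for some $x$. Writing $Z=\ol{\{z\}}\cup\bigcup_{w\in Z\setminus\ol{\{z\}}}\ol{\{w\}}$ and applying join-irreducibility shows that either $Z=\ol{\{z\}}$ or $z$ admits a strict generization inside $Z$; to conclude one must forbid an infinite strictly increasing chain of generizations, i.e.\ one needs the ascending chain condition on irreducible closed subsets. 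This does \emph{not} follow from topological noetherianness (DCC on closed subsets): the sober noetherian space whose points form a chain $0\prec 1\prec 2\prec\cdots\prec\infty$ of specializations has $\bbN$ as a join-irreducible specialization-closed subset that is no point closure, so your recovery would manufacture a phantom point there. For a noetherian \emph{scheme} the needed chain condition does hold, because such a chain lies in an affine chart and becomes a strictly descending chain of primes below a fixed prime of a noetherian ring, which is finite by Krull's height theorem --- but you give no argument, and you flag (c) rather than (a) as the obstacle. Note that the paper sidesteps exactly this issue by dualizing: it characterizes the image of $X$ via join-irreducible \emph{generalization}-closed subsets and \emph{minimal} points, where the relevant chains are descending chains of closed subsets and plain noetherianness of $\abs{X}$ suffices.
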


The nontrivial part is, of course, the implication ``$(2) \imply (3)$''.
It is surprisingly enough because the Grothendieck monoid $\M(\coh X)$ loses a lot of information
and the Grothendieck group $\K_0(\coh X)$ never recovers the topology of $X$.
The author does not know other algebraic invariants 
which completely recover the topology of schemes.
\begin{ex}\label{ex:counter M K}
Let $\bbk$ be an algebraic closed field.
\begin{enua}
\item
Let $R$ be a finite dimensional commutative $\bbk$-algebra.
Then $\M(\catmod R) \iso \bbN^{\oplus n}$,
where $n$ is the number of maximal ideals of $R$ (see Example \ref{ex:M fd alg}).
In particular, if $R$ is local, then we have $\M(\catmod R) \iso \bbN$.

Consider a finite dimensional commutative local $\bbk$-algebra $R:=\bbk[x,y]/(x^2,xy,y^2)$.
For any $\lmd\in \bbk$,
define an $\bbk$-algebra homomorphism $\phi_{\lmd} \colon R \to M_2(\bbk)$ 
to the matrix algebra $M_2(\bbk)$ of degree $2$ by
\[
\phi_{\lmd}(x)=
\begin{bmatrix}
0& 1 \\
0& 0
\end{bmatrix}
, \quad
\phi_{\lmd}(y)=
\begin{bmatrix}
0& \lmd \\
0& 0
\end{bmatrix}.
\]
Then $\phi_{\lmd}$ defines a $2$-dimensional $R$-module $M_{\lmd}$.
We can easily see that $M_{\lmd}$ is indecomposable
and $M_{\lmd} \not\iso M_{\mu}$ if $\lmd \ne \mu$.
Thus $\catmod R$ has infinitely many indecomposable objects.
On the other hand, $\catmod \bbk$ has exactly one indecomposable object $\bbk$.
Hence $\catmod R$ and $\catmod\bbk$ are very different.
Despite this, by the fact above, we have $\M(\catmod \bbk) \iso \bbN \iso \M(\catmod R)$.
This shows that Grothendieck monoids lose a lot of categorical information.
\item
Let $\bbP^1$ be the projective line over $\bbk$.
Then we have $\K_0(\coh \bbP^1) \iso \bbZ^{\oplus 2}$ (see Example \ref{ex:compare M K_0}).
On the other hand, we have
$\K_0(\coh(\Spec(\bbk \times \bbk)))\iso \K_0(\catmod(\bbk \times \bbk)) \iso \bbZ^{\oplus 2}$.
Hence $\bbP^1 \not\iso \Spec(\bbk \times \bbk)$ as topological spaces,
but $\K_0(\coh \bbP^1) \iso \K_0(\coh(\Spec(\bbk \times \bbk)))$ as groups.
This shows that ``$(2) \imply (3)$'' of Theorem \ref{thm:D} becomes false
if Grothendieck monoids are replaced by Grothendieck groups.
\end{enua}
\end{ex}

Example \ref{ex:counter M K} (1) also shows that
``$(1)\imply (2)$'' in Theorem \ref{thm:D} is strict.
However, the author does not know such an example for ``$(2)\imply (3)$'' at the moment.
\begin{ques}
Is there a pair of noetherian schemes $X$ and $Y$
such that $X \iso Y$ as topological spaces
but $\M(\coh X) \not\iso \M(\coh Y)$ as monoids?
\end{ques}

Finally, 
we describe the relationship between Grothendieck monoids and 
other invariants for noetherian schemes.
It is illustrated as follows:
\begin{figure}[H]
\centering
\begin{tikzpicture}
\node (X) at (0,0) {$X$};
\node (coh) at (4,0) {$\coh X$};
\node (top) at (0,-1.5) {$\abs{X}$};
\node (Db) at (8,0) {$\D^{b}(\coh X)$};
\node (M) at (4,-1.5) {$\M(\coh X)$};
\node (K) at (8,-1.5) {$\K_0(\coh X)$};
 \draw[<->,thick] (X) to["{\scriptsize \cite{Gab62,BKS07}}"] (coh);
 \draw[|->,thick] (coh) -- (Db);
 \draw[|->,thick] (coh) -- (M);
 \draw[|->,thick] (Db) -- (K);
 \draw[|->,thick] (M) to["$\gp$"'] (K);
 \draw[|->,thick] (X) -- (top);
 \draw[|->,thick] (M) to["{\scriptsize Theorem \ref{thm:D}}"] (top);
 \draw[|->, dashed] (Db) to[bend right=30,"{\scriptsize \cite{BO01,Balmer}}"] (X);
 \draw[->] (M) to[bend left=10] (X)	;
 \draw[->] (K) to[bend left=25] (top);
 \node[text width=8pt,fill=white] at (4,-2.6) {\large \red{$\Cross$}};
 \node[text width=8pt,fill=white] at (1.6,-0.85) {\large \red{$\Cross$}};
\end{tikzpicture}
\end{figure}
Here
\begin{itemize}
\item 
$\abs{X}$ is the underlying topological space of $X$,
\item
$\D^{b}(\coh X)$ is the bounded derived category of $\coh X$,
\item
$\gp$ is the group completion (see Definition \ref{dfn:gp} and Remark \ref{rmk:compare grp mon}),
\item
the thick arrows $A \bm{\mapsto} B$ indicate that
$B$ can be constructed from $A$,
\item
the arrows $A \to B$ marked with a cross indicate that
$B$ cannot be recovered from $A$ (see Example \ref{ex:counter M K}),
\item
the dashed arrow indicates that some assumption or additional structure is needed.
\end{itemize}

\medskip
\noindent
{\bf Organization.}
This paper is organized as follows.

In Section \ref{s:Serre face}, 
we first review some definitions and properties of commutative monoids,
exact categories and its Grothendieck monoid.
Then we establish a bijection between Serre subcategories of an exact category
and faces of its Grothendieck monoid.

In Section \ref{s:ex fd alg},
we study the Grothendieck monoid of an exact category with finiteness conditions
and classify its Serre subcategories.
In particular, we give an explicit example of classifying Serre subcategories
of a complicated exact category related to a finite dimensional algebra
by using its Grothendieck monoid
(Example \ref{ex:explicit fd}).

In Section \ref{s:ex curve},
we determine the Grothendieck monoids of exact categories related to a smooth projective curve
and classify Serre subcategories of them.

In Section \ref{s:spec},
we first review the spectrum of a monoid and monoidal spaces,
which are natural analogies of the spectrum of a commutative ring
and ringed spaces, respectively.
Next, we introduce a topology on the set of Serre subcategories
and study relations with the spectrum of the Grothendieck monoid.
Finally, we recover the topology of a noetherian scheme $X$ 
from the Grothendieck monoid $\M(\coh X)$.

\medskip
\noindent
{\bf Conventions.}
For a category $\catC$,
we denote by $\abs{\catC}$ the class of all isomorphism classes of objects
and $\Hom_{\catC}(X,Y)$ the set of morphisms between objects $X$ and $Y$ in $\catC$.
The isomorphism class of $X\in \catC$ is also denoted by $X$.
In this paper,
we suppose that all categories are \emph{skeletally small}, that is, 
the class $\abs{\catC}$ forms a set.
Also, we suppose that all subcategories are full subcategories closed under isomorphisms.
We often identify subcategories of $\catC$ and subsets of $\abs{\catC}$.

For a noetherian ring $\LL$ (with identity and associative multiplication),
we denote by $\catmod \LL$ the category of finitely generated (right) $\LL$-modules.
We set $\Hom_{\LL}(X,Y):=\Hom_{\catmod \LL}(X,Y)$.

A \emph{variety} over a field $\bbk$
means a separated integral scheme of finite type over $\bbk$. 
A \emph{curve} is a $1$-dimensional variety.

A \emph{monoid} means a semigroup with a unit.
Every monoid is assumed to be \emph{commutative}.
We use an additive notation, that is,
the operation is denoted by $+$ with its unit $0$.
We denote by $\bbN$ the monoid of non-negative integers.
For a subset $S$ of a monoid $M$,
we denote by $\gen{ S }_\bbN$ the smallest submonoid of $M$ containing $S$.

\medskip
\noindent
{\bf Acknowledgement.}
The author would like to thank Haruhisa Enomoto for valuable comments and discussion.
This work is supported by JSPS KAKENHI Grant Number JP21J21767.

\section{Classifying Serre subcategories via the Grothendieck monoid}\label{s:Serre face}
In this section,
we establish a bijection between the set of Serre subcategories of an exact category
and the set of faces of its Grothendieck monoid (\S \ref{ss:Serre face}).
In the first half of this section,
we review some definitions and properties of commutative monoids (\S \ref{ss:mon}),
exact categories and its Grothendieck monoid (\S \ref{ss:exact}).

Before starting anything,
let us comment on the author's related work \cite{ES}.
\begin{rmk}\label{rmk:ES}
In \cite{ES}, 
the author and Enomoto study the Grothendieck monoid of an \emph{extriangulated category},
which is both a generalization and unification of both triangulated and exact categories.
In the present paper,
we study the Grothendieck monoid by using the notion of \emph{S-equivalence},
whereas, in \cite{ES}, we do that by using the more sophisticated notion of \emph{c-equivalence}.
Most of the content of this section is generalized for extriangulated categories,
and the proofs are simplified by using c-equivalence.
Thus, if the reader is comfortable with extriangulated categories,
this section can be replaced by \cite[Section 2.2 and 3.1]{ES}.
Although this, we leave this paper as written using S-equivalence
since it is intuitive and
there is a slight advantage such as Fact \ref{fct:= ab}.
\end{rmk}

\subsection{Preliminaries: commutative monoids}\label{ss:mon}
We collect minimal definitions and properties of commutative monoids
to describe our first results.
The main reference of this subsection is \cite{Og}.

A \emph{monoid} is a semigroup with a unit.
In this paper, every monoid is assumed to be \emph{commutative}.
Thus we use the additive notation, that is,
the binary operation is denoted by $+$, and the unit is denoted by $0$.
A \emph{monoid homomorphism} is a map $f\colon M \to N$ 
satisfying $f(x+y)=f(x)+f(y)$ and $f(0_M)=0_N$.
We denote $\Mon$ by the category of (commutative) monoids and monoid homomorphisms.
The category $\Mon$ has arbitrary small limits and colimits (see \cite[Section I.1.1]{Og}). 
Note that the forgetful functor $\Mon \to \Set$ preserves small limits,
where $\Set$ denotes the category of sets and maps.
We can define
products $\prod_{i\in I}M_i$ and direct sums (= coproducts) $\bigoplus_{i\in I}M_i$ of monoids
in a similar manner to vector spaces. 
In particular, finite products and finite direct sums coincide.
A subset $N$ of a monoid $M$ is called a \emph{submonoid}
if it is closed under the multiplication of $M$ and contains the identity element $0$ of $M$.

A basic example of monoids is
the set $\bbN$ of non-positive integers with the arithmetic addition.
A monoid $M$ is said to be \emph{free} 
if it is isomorphic to $\bbN^{\oplus I}$ for some index set $I$.
The \emph{rank} and a \emph{basis} of a free monoid $M$
are defined in similar ways to vector spaces.

\begin{rmk}
For a monoid homomorphism $f\colon M \to N$,
define a submonoid of $M$ by
\[
\Ker (f) :=
\{x\in M \mid f(x)=0  \}.
\]
A caution is that \emph{the condition $\Ker (f)=0$ does not imply $f$ is injective}.
Indeed, the monoid homomorphism
\[
f\colon \bbN^{\oplus 2} \to \bbN,\quad
(x,y) \mapsto x+y
\]
is not injective but $\Ker (f)=0$.
\end{rmk}

The notion of quotients of monoids slightly differs from that of vector spaces.
%
Let $M$ be a monoid and $N$ its submonoid.
Define an equivalence relation on $M$ by
\[
x\sim_N y :\equi \text{there exist $n,n'\in N$ such that $x+n=y+n'$.}
\]
Then the quotient set $M/N:=M/{\sim_N}$ has a natural monoid structure given by
\[
(x \bmod N) + (y \bmod N):=(x+y) \bmod N,
\]
where $x \bmod N$ denotes the equivalence class containing $x\in M$.
This monoid $M/N$ is called the \emph{quotient monoid} of $M$ by $N$.
The quotient map $M\to M/N$ is a monoid homomorphism.
We often write $x\equiv y \bmod N$ if $x \sim_N y$ for $x,y\in M$.
We can easily see that the quotient monoids have the following universal property.
\begin{prp}\label{prp:mon quot}
Let $N$ be a submonoid of a monoid $M$ and
let $\pi \colon M\to M/N$ be the quotient homomorphism.
Then for any monoid homomorphism $f\colon M \to X$ such that $f(N)=0$,
there exists a unique monoid homomorphism $\ol{f}\colon M/N \to X$ satisfying $f=\ol{f}\pi$.
This means that the diagram
\[
N \underset{0}{\overset{\iota}{\rightrightarrows}} M \xr{\pi} M/N
\]
is a coequalizer diagram in $\Mon$,
where $\iota$ is the inclusion map.
\end{prp}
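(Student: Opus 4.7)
The plan is to define the mediating homomorphism $\ol{f} \colon M/N \to X$ on representatives and verify well-definedness, homomorphism property, the factorization $f = \ol{f}\pi$, and uniqueness. For the coequalizer reformulation, I would then observe that a monoid homomorphism $f \colon M \to X$ satisfies $f \circ \iota = f \circ 0$ (where $0$ is the zero homomorphism on $N$) precisely when $f(n) = 0$ for every $n \in N$, so the universal property of the quotient matches the coequalizer property verbatim.

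First I would set $\ol{f}(x \bmod N) := f(x)$. To check that this is well-defined, suppose $x \sim_N y$, so that there exist $n, n' \in N$ with $x + n = y + n'$. Applying $f$ and using $f(N) = 0$, we obtain $f(x) = f(x) + f(n) = f(y) + f(n') = f(y)$. Next, since both the addition on $M/N$ and the unit $0 \bmod N$ are defined via representatives, it is immediate that $\ol{f}$ respects addition and sends $0 \bmod N$ to $0$, so it is a monoid homomorphism. By construction $\ol{f} \pi (x) = \ol{f}(x \bmod N) = f(x)$, establishing $f = \ol{f}\pi$.

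For uniqueness, if $g \colon M/N \to X$ is any monoid homomorphism satisfying $f = g\pi$, then for every $x \in M$ we have $g(x \bmod N) = g\pi(x) = f(x) = \ol{f}(x \bmod N)$, so $g = \ol{f}$ since $\pi$ is surjective. Finally, to obtain the coequalizer statement, note that any monoid homomorphism $f \colon M \to X$ coequalizes $\iota$ and the zero map exactly when $f(n) = 0$ for all $n \in N$, i.e.\ $f(N) = 0$; the factorization and uniqueness just established are then exactly the defining property of a coequalizer in $\Mon$.

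There is essentially no main obstacle here: the only nontrivial step is the well-definedness check, which is a direct one-line use of the hypothesis $f(N) = 0$; the rest is formal verification on representatives.
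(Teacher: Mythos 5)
Your proof is correct and is exactly the standard argument the paper has in mind: the paper omits the proof entirely (introducing the proposition with ``we can easily see''), and your verification --- well-definedness via $x+n=y+n'$ and $f(N)=0$, the homomorphism and factorization checks on representatives, uniqueness from surjectivity of $\pi$, and the translation of $f(N)=0$ into the coequalizing condition --- fills in precisely the intended routine details.
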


%
Next, we introduce a class of submonoids which corresponds to
Serre subcategories in \S \ref{s:Serre face}.
\begin{dfn}\label{dfn:face}
Let $M$ be a monoid.
\begin{enua}
\item
A submonoid $F$ of $M$ is called a \emph{face}
if for all $x,y\in M$, we have that $x+y\in F$ if and only if both $x\in F$ and $y \in F$.
\item
$\Face(M)$ denotes the set of faces of $M$.
\end{enua}
\end{dfn}

We regard $\Face(M)$ as a poset by the inclusion-order.

\begin{rmk}\label{rmk:Face}
Let $M$ be a monoid.
An element $x\in M$ is a \emph{unit} if there exists $y\in M$ such that $x+y=0$.
We denote by $M^{\times}$ the set of units of $M$.
Then $M^{\times}$ is the smallest face.
On the other hand, $M$ itself is the largest face of $M$.
Thus $\Face(M)$ has the maximum and minimum elements.
$\Face(M)$ consists of exactly one point if and only if $M$ is a group.
\end{rmk}

\begin{ex}
Let $M$ be a free monoid of rank $2$ with a basis $e_1$ and $e_2$.
\begin{enua}
\item
$\bbN(e_1+e_2)$ is a submonoid of $M$ but not a face.
\item
A face of $M$ is one of the following:
$M$ itself, $\bbN e_1$, $\bbN e_2$ or $0$.
\end{enua}
\end{ex}

We list the properties of monoids which we will use.
\begin{dfn}\label{dfn:property mon}
Let $M$ be a monoid.
\begin{enua}
\item
$M$ is \emph{sharp} (or \emph{reduced}) 
if $a+b=0$ implies $a=b=0$ for any $a,b\in M$.
\item
$M$ is \emph{cancellative} (or \emph{integral}) 
if $a + x = a + y$ implies $x = y$ for any $a, x, y \in M$.
\end{enua}
\end{dfn}

Finally, we discuss the relationship between monoids and groups.
\begin{dfn}\label{dfn:gp}
The \emph{group completion} of a monoid $M$
is a pair $(\gp M, \rho)$ of a group $\gp M$ and a monoid homomorphism $\rho \colon M \to \gp M$
satisfying the following universal property:
\begin{itemize}
\item 
For every monoid homomorphism $f\colon M \to G$ into a group $G$,
there exists a unique group homomorphism $\ol{f}\colon \gp M \to G$
such that $f=\ol{f}\rho$.
\end{itemize}
\end{dfn}
The group completion $\gp M$ actually exists for any monoid $M$.
It is constructed as the localization of $M$ with respect to $M$ itself
(see Definition \ref{def:mon-loc}).
The group completion has the following properties by the construction.
\begin{prp}[{cf. Definition \ref{def:mon-loc}}]\label{prp:property gp}
Let $M$ be a monoid and $(\gp M, \rho \colon M \to \gp M)$ its group completion.
\begin{enua}
\item
$\gp M$ is an abelian group.
\item
For any $x,y\in M$,
the equality $\rho(x)=\rho(y)$ holds in $\gp M$ if and only if
$x+s = y+s$ in $M$ for some $s\in M$.
\end{enua}
\end{prp}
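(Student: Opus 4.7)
The plan is to establish both statements at once by giving an explicit construction of $\gp M$ as a quotient of $M \times M$ and verifying that this construction satisfies the universal property; the two claims can then be read off the construction directly. Concretely, I would set
\[
\gp M := (M \times M)/{\sim}, \qquad \rho(x) := [(x,0)],
\]
where $(a,b) \sim (c,d)$ iff there exists $s \in M$ with $a + d + s = b + c + s$. The intuition is that $[(a,b)]$ represents the formal difference $\rho(a) - \rho(b)$; this is precisely the localization of $M$ at $M$ alluded to in the statement.

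First I would check that $\sim$ is an equivalence relation; reflexivity and symmetry are immediate, and transitivity follows by adding the two witnessing relations and absorbing the resulting extra term into a new $s$. Next I would define componentwise addition $[(a,b)] + [(c,d)] := [(a+c, b+d)]$, verify well-definedness on equivalence classes, and observe that it is associative and commutative (since $M$ is), has identity $[(0,0)]$, and that $[(b,a)]$ is an inverse of $[(a,b)]$ because $(a+b, a+b) \sim (0,0)$. This gives (1), with the abelianness of $\gp M$ inherited directly from the commutativity of $M$.

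For the universal property, given a monoid homomorphism $f \colon M \to G$ into a group, I would define $\ol{f}([(a,b)]) := f(a) f(b)^{-1}$; the relation $\sim$ is exactly what is required to verify well-definedness, and uniqueness of $\ol{f}$ follows because $\gp M$ is generated as a group by $\rho(M) \cup (-\rho(M))$. Statement (2) is then immediate by unpacking the definition: $\rho(x) = \rho(y)$ iff $[(x,0)] = [(y,0)]$, which by definition of $\sim$ says there exists $s \in M$ with $x + 0 + s = 0 + y + s$, i.e.\ $x + s = y + s$.

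The main technical points are the transitivity of $\sim$ and the well-definedness of both the addition on $\gp M$ and the factored homomorphism $\ol{f}$; all three reduce to superimposing the two witnessing relations and using the commutativity of $M$. There is no real obstacle: once the construction is in place, both assertions of the proposition fall out essentially by inspection, and statement (2) is a direct readout of the equivalence relation defining $\gp M$.
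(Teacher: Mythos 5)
Your construction of $\gp M$ as $(M\times M)/{\sim}$ with $(a,b)\sim(c,d)$ iff $a+d+s=b+c+s$ for some $s$ is exactly the paper's route: the paper defers the proof to Definition \ref{def:mon-loc}, which builds the localization $M_S$ by the same equivalence relation, and the group completion is the localization of $M$ at $S=M$ itself. Both claims then follow by inspection of the construction, just as you say, so the proposal is correct and matches the paper's approach.
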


The cancellation property is related to the group completion as follows.
\begin{prp}\label{prp:grp comp}
Let $M$ be a monoid and $(\gp M, \rho \colon M \to \gp M)$ its group completion.
Then the following are equivalent.
\begin{enua}
\item
$M$ is cancellative.
\item
The monoid homomorphism $\rho \colon M \to \gp M$
is injective.
\item
There is an injective monoid homomorphism from $M$ to some group.
\end{enua}
\end{prp}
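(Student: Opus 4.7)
The plan is to prove the three implications $(1) \imply (2) \imply (3) \imply (1)$, each of which is essentially immediate from what is already on the table.

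For $(1) \imply (2)$, I would invoke Proposition \ref{prp:property gp}(2): if $\rho(x) = \rho(y)$, then by that characterization there exists $s \in M$ with $x + s = y + s$, and cancellativity of $M$ then yields $x = y$. Hence $\rho$ is injective.

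For $(2) \imply (3)$, there is nothing to do beyond noting that $\gp M$ is itself a group by Proposition \ref{prp:property gp}(1), so $\rho \colon M \to \gp M$ serves as the required injective monoid homomorphism into a group.

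For $(3) \imply (1)$, I would take an injective monoid homomorphism $\iota \colon M \inj G$ into a group $G$. Given $a + x = a + y$ in $M$, applying $\iota$ gives $\iota(a) + \iota(x) = \iota(a) + \iota(y)$ in $G$; since $G$ is a group, $\iota(a)$ has an inverse and we may cancel it to obtain $\iota(x) = \iota(y)$. Injectivity of $\iota$ then forces $x = y$, proving cancellativity.

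There is no real obstacle here: the content is entirely packaged in Proposition \ref{prp:property gp}, which tells us exactly when two elements of $M$ are identified by $\rho$. The only mild subtlety is remembering that in a monoid ``$\Ker \rho = 0$'' is weaker than ``$\rho$ is injective'' (as warned in the preceding remark), so the argument for $(1) \imply (2)$ must go through the explicit description of the relation defining $\gp M$ rather than through a kernel computation.
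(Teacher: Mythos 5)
Your proposal is correct and follows essentially the same route as the paper, which simply cites Proposition \ref{prp:property gp} together with the fact that any submonoid of an abelian group is cancellative; you have merely written out the three implications that the paper leaves implicit. Your closing remark about the difference between $\Ker\rho=0$ and injectivity is a sensible precaution, though it plays no role in the actual argument.
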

\begin{proof}
It follows immediately from Proposition \ref{prp:property gp} 
and the fact that any submonoid of an abelian group is cancellative.
\end{proof}

\subsection{Preliminaries: Grothendieck monoids of exact categories}\label{ss:exact}
In this subsection,
we first give a brief review of an exact category
and then define the Grothendieck monoid of an exact category,
which is the main subject of this paper.

Let $\catA$ be an abelian category.
An additive subcategory $\catE \subseteq \catA$ is said to be \emph{extension-closed} if
for any exact sequence $0\to X \to Y \to Z \to 0$ in $\catA$,
the condition $X, Z\in \catE$ implies $Y \in \catE$.
We also say that $\catE$ is an \emph{exact category}
when we want to omit the ambient abelian category $\catA$.
A \emph{conflation} in $\catE$ is
an exact sequence $0\to X \to Y \to Z \to 0$ in $\catA$
with $X,Z \in \catE$.
Note that $Y$ also belongs to $\catE$ automatically.
For a conflation $0\to X \xr{f} Y \xr{g} Z \to 0$ in $\catE$,
the morphism $f$ (resp. $g$) is called an \emph{inflation} (resp. \emph{deflation}).
In this case, we say that $Z$ (resp. $X$) is the cokernel of the inflation $f$ 
(resp. the kernel of the deflation of $g$).
An inflation (resp. a deflation) is sometimes denoted by $X \infl Y$ (resp. $Y \defl Z$).
\begin{ex}
Consider the category $\catmod \bbZ$ of finitely generated modules over
the ring $\bbZ$ of integers.
The category $\proj \bbZ$ of finitely generated free (=projective) $\bbZ$-modules 
is an extension-closed subcategory of the abelian category $\catmod \bbZ$.
Then the sequence
\[
0 \to \bbZ \xr{\begin{bsmatrix}1 \\ 1 \end{bsmatrix}} \bbZ^{\oplus 2}
\xr{\begin{bsmatrix}1 & -1 \end{bsmatrix}} \bbZ \to 0
\]
is a conflation in $\proj \bbZ$,
while the sequence
\[
0 \to \bbZ \xr{2} \bbZ \to \bbZ/2\bbZ \to 0
\]
is not a conflation in $\proj \bbZ$.
Thus the monomorphism $\bbZ \xr{\begin{bsmatrix}1 \\ 1 \end{bsmatrix}} \bbZ^{\oplus 2}$
is an inflation in $\proj \bbZ$
but the monomorphism $\bbZ \xr{2} \bbZ$ is not.
\end{ex}
Let $\catE$ be an exact category.
An additive subcategory $\catF \subseteq \catE$ is said to be \emph{conflation-closed} if
for any conflation $0\to X \to Y \to Z \to 0$ in $\catE$,
the condition $X, Z\in \catF$ implies $Y \in \catF$.
For an abelian category,
conflation-closed subcategories coincide with extension-closed subcategories.
A conflation-closed subcategory $\catF \subseteq \catE$ is also an exact category
which has the same ambient abelian category as that of $\catE$.

\begin{rmk}
There is an axiomatic definition of exact categories 
which does not depend on the embedding $\catE \inj \catA$ to an abelian category.
See \cite{Bu10} for sophisticated and standard treatments of exact categories.
We do not follow this formulation for accessibility.
Our treatment of exact categories is justified by Thomason's embedding theorem
(cf. \cite[Appendix A]{Bu10}).
Even if the reader thinks of exact categories as the axiomatic ones,
there is no problem at all in this paper.
\end{rmk}

Let us introduce the Grothendieck monoid $\M(\catE)$ of an exact category $\catE$,
a natural monoid version of the Grothendieck group $\K_0(\catE)$.
It is defined by some universal property.

\begin{dfn}[{\cite[Definition 3.2]{En22}}]\label{dfn:Gro mon UMP}
Let $\catE$ be an exact category.
\begin{enua}
\item
An \emph{additive function} on $\catE$ with values in a monoid $M$
is a map $f\colon \abs{\catE} \to M$ satisfying the following conditions:
\begin{enur}
\item
$f(0)=0$ holds.
\item
For any conflation $0\to X \to Y\to Z \to 0$ in $\catE$,
we have that $f(Y)=f(X)+f(Z)$ in $M$.
\end{enur}
\item
A \emph{Grothendieck monoid} $\M(\catE)$ of $\catE$
is a monoid $\M(\catE)$ together with an additive function $\pi\colon \abs{\catE} \to \M(\catE)$ 
which satisfies the following universal property:
\begin{itemize}
\item
For any additive function $f\colon \abs{\catE} \to M$ with values in a monoid $M$,
there exists a unique monoid homomorphism $\ol{f}\colon \M(\catE) \to M$
such that $f=\ol{f}\pi$.
\[
\begin{tikzcd}
{\abs{\catE}} \arr{r,"f"} \arr{d,"\pi"'} & M\\
\M(\catE) \arr{ru,"\ol{f}"'} & .
\end{tikzcd}
\]
\end{itemize}
We often write $[X]:=\pi(X)$ for $X\in \abs{\catE}$.
\end{enua}
\end{dfn}

The Grothendieck monoid $\M(\catE)$ actually exists for any exact category $\catE$
(see \cite[Proposition 3.3]{En22}).
\begin{ex}
Let $\catE$ be an exact category.
The following equalities hold in $\M(\catE)$.
\begin{itemize}
\item
$[X]+[Z]=[Y]$ for any conflation $0\to X\to Y\to Z\to0$.
It follows from the assignment $X\mapsto [X]$ is an additive function.
\item
$[X]+[Y]=[X\oplus Y]$ for any objects $X, Y\in\catE$.
Indeed, there is a split conflation $0\to X \to X\oplus Y \to Y \to 0$.
\end{itemize}
\end{ex}

We introduce some terminologies 
to give a direct characterization of whether $[X]=[Y]$ in $\M(\catE)$.
Let $X$ be an object of an exact category $\catE$.
Two inflations $Y\infl X$ and $Z \infl X$ are \emph{equivalent}
if there is an isomorphism $Y\isoto Z$ such that the following diagram commutes:
\[
\begin{tikzcd}[row sep=3]
Y \arr{rd,rightarrowtail} \arr{dd,"\iso"'} &\\
 & X \\
Z \arr{ru,rightarrowtail} &.
\end{tikzcd}
\]
An \emph{admissible subobject} of $X$ is 
the equivalence class of an inflation $Y\infl X$.
We often say that $Y$ is an admissible subobject of $X$ 
and denote the cokernel of $Y\infl X$ by $X/Y$.
We omit the adjective \emph{admissible} if $\catE$ is an abelian category.
For two admissible subobjects $Y$ and $Z$ of $X$,
we write $Y \le Z$ if there exists an inflation $Y \infl Z$ 
such that the following diagram commutes:
\[
\begin{tikzcd}[row sep=3]
Y \arr{rd,rightarrowtail} \arr{dd,rightarrowtail} &\\
 & X \\
Z \arr{ru,rightarrowtail} &.
\end{tikzcd}
\]
This binary relation $\le$ yields a partial order on the set of admissible subobjects of $X$.
See \cite[Section 2]{En22} for a detailed study of the poset of admissible subobjects.

An \emph{admissible subobject series} of $X$ is
a finite sequence $0 = X_0 \le X_1 \le \cdots \le X_n=X$ of admissible subobjects of $X$.
Two admissible subobject series $0 = X_0 \le X_1 \le \cdots \le X_n=X$ and
$0 = Y_0 \le Y_1 \le \cdots \le Y_m=Y$ are \emph{isomorphic}
if $n=m$ and there exists a permutation $\sg\in\symg_n$
such that $X_i/X_{i-1} \iso Y_{\sg(i)}/Y_{\sg(i)-1}$ for all $1\le i \le n$.
In this case, we say that $X$ and $Y$ are \emph{S-equivalent}
\footnote{The terminology \emph{S-equivalence} comes from \cite{Ki94},
which was originally introduced in \cite{Se67}.
The original notion of S-equivalence is used 
to study the moduli space of representations of a finite dimensional algebra.}
and denote it by $X \sim_S Y$.
If $X \sim_S Y$ holds, then we have the following equality in $\M(\catE)$:
\[
[X]=[X_1]+[X_2/X_1]+\cdots +[X_n/X_{n-1}]=
[Y_1]+[Y_2/Y_1]+\cdots +[Y_n/Y_{n-1}]=[Y].
\]
Conversely,
the following holds.
\begin{fct}[{\cite[Proposition 3.4]{En22}}]\label{fct:=}
Let $\catE$ be an exact category.
For any two objects $X,Y \in \catE$,
the equality $[X]=[Y]$ holds in $\M(\catE)$ if and only if
there exists a sequence of objects $X=X_0, X_1, \dots, X_m=Y$ in $\catE$
such that $X_{i-1} \sim_S X_{i}$ for each $i$.
\end{fct}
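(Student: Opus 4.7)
The plan is to realize the equivalence relation described on the right-hand side as a monoid in its own right and then identify it with $\M(\catE)$ using the universal property of the Grothendieck monoid.

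Let $\approx$ denote the equivalence relation on $\abs{\catE}$ generated by $\sim_S$, so that $X \approx Y$ iff there is a chain as in the statement. The $(\Leftarrow)$ direction is already essentially shown in the paragraph preceding the statement: if $X \sim_S Y$ via isomorphic admissible subobject series, then iteratively applying the additivity axiom to each step of the series expresses $[X]$ and $[Y]$ as the same sum (up to reordering, which is legal because $\M(\catE)$ is commutative) of classes of successive subquotients, hence $[X]=[Y]$; the general case follows by transitivity along the chain.

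For the $(\Rightarrow)$ direction, I would form the quotient set $N := \abs{\catE}/{\approx}$ and endow it with the operation $[X]_\approx + [Y]_\approx := [X \oplus Y]_\approx$ and unit $[0]_\approx$. The crux is to verify two well-definedness and additivity claims:
\begin{enua}
\item If $X \sim_S X'$, then $X \oplus Y \sim_S X' \oplus Y$. Given isomorphic series $0 = X_0 \le \cdots \le X_n = X$ and $0 = X'_0 \le \cdots \le X'_n = X'$ with permutation $\sg$ matching the subquotients, I prepend $Y$ and form
\[
0 \le Y \le X_1 \oplus Y \le \cdots \le X_n \oplus Y = X \oplus Y,
\]
using that termwise direct sums of inflations are inflations in an exact category, and similarly for $X'$. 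The two new series have matching subquotients (via the permutation fixing the extra $Y$-term and acting as $\sg$ on the rest), so $X \oplus Y \sim_S X' \oplus Y$.
\item For a conflation $0 \to X \to Y \to Z \to 0$, we have $Y \sim_S X \oplus Z$, because both $0 \le X \le Y$ and $0 \le X \le X \oplus Z$ are admissible subobject series with subquotients $X$ and $Z$ in the same order.
\end{enua}
Given (1), addition on $N$ is well-defined, associative, and commutative with unit $[0]_\approx$, so $N$ is a monoid. Given (2), the quotient map $\pi' \colon \abs{\catE} \to N$ is an additive function: $\pi'(Y) = [Y]_\approx = [X \oplus Z]_\approx = \pi'(X) + \pi'(Z)$.

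By the universal property of $\M(\catE)$ applied to $\pi'$, there is a unique monoid homomorphism $\vphi \colon \M(\catE) \to N$ with $\vphi([X]) = [X]_\approx$. Conversely, the $(\Leftarrow)$ direction says that the canonical additive function $\abs{\catE} \to \M(\catE)$, $X \mapsto [X]$, is constant on $\approx$-classes, hence descends to a set map $\psi \colon N \to \M(\catE)$; since $\psi$ respects $0$ and direct sums and $\M(\catE)$ is generated by the classes $[X]$, $\psi$ is a monoid homomorphism. By construction $\vphi$ and $\psi$ are mutually inverse. Therefore $[X] = [Y]$ in $\M(\catE)$ iff $[X]_\approx = [Y]_\approx$ in $N$ iff $X \approx Y$, as required.

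The main obstacle is step (1): the chain-by-chain construction of the enlarged admissible subobject series on $X \oplus Y$ and the verification that the added $Y$-term is genuinely compatible with the admissible subobject structure, which relies on the exact-category axiom that direct sums of inflations are inflations (and, implicitly, that admissible subobject series behave well under the embedding $\catE \hookrightarrow \catA$). Once this is in place, the rest is a routine application of the universal property.
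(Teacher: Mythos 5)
The paper states this result as a Fact imported from \cite[Proposition 3.4]{En22} and gives no proof of its own (only the easy direction $(\Leftarrow)$ is sketched in the paragraph preceding the statement). Your argument is correct and self-contained: the two verifications --- that $X\sim_S X'$ implies $X\oplus Y\sim_S X'\oplus Y$ (via direct sums of inflations) and that $Y\sim_S X\oplus Z$ for any conflation --- are exactly what is needed to turn the equivalence relation generated by $\sim_S$ into a monoid congruence on $(\abs{\catE},\oplus)$ admitting an additive quotient map, after which the universal property forces $N\iso\M(\catE)$ and hence the characterization of $[X]=[Y]$. This is essentially the construction of the Grothendieck monoid used in the cited source, so the proposal matches the intended argument.
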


If $\catE$ is an abelian category, we can strengthen this fact.
\begin{fct}[{\cite[Proposition 3.3]{Br98}}]\label{fct:= ab}
Let $\catA$ be an abelian category.
For any two objects $X,Y\in\catA$,
the equality $[X]=[Y]$ holds in $\M(\catA)$ if and only if 
$X$ and $Y$ are S-equivalent.
\end{fct}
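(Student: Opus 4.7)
The forward implication is routine: additivity of $[-]$ on conflations collapses each admissible subobject series to the sum of the classes of its factors, and S-equivalent series share the same multiset of factors, so $[X] = [Y]$ in $\M(\catA)$.

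For the reverse, Fact \ref{fct:=} already supplies a chain $X = X_0, X_1, \dots, X_m = Y$ with $X_{i-1} \sim_S X_i$ for each $i$, so the problem reduces to verifying that $\sim_S$ is a transitive relation on $\abs{\catA}$. The natural tool is the Schreier refinement theorem in an abelian category: any two finite subobject series of a common object admit refinements of the same length whose composition factors coincide up to permutation. This is a standard consequence of the Zassenhaus (butterfly) lemma, applied to the modular lattice of subobjects, using that intersections and sums exist in $\catA$; it is precisely where the abelian hypothesis enters and where the exact-category version of Fact \ref{fct:=} falls short.

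Given $X \sim_S Y$ witnessed by series $(X_i)_{i=0}^n$ on $X$ and $(Y_j)_{j=0}^n$ on $Y$ with $X_i/X_{i-1} \iso Y_{\sigma(i)}/Y_{\sigma(i)-1}$, and $Y \sim_S Z$ witnessed by series $(Y'_k)_{k=0}^p$ on $Y$ and $(Z_\ell)_{\ell=0}^p$ on $Z$, I would apply Schreier to the two chains $(Y_j)$ and $(Y'_k)$ on $Y$ to obtain common refinements $(\widetilde{Y}_\alpha)$ and $(\widetilde{Y}'_\alpha)$ of equal length $N$ with matched composition factor multisets. Each refinement amounts to inserting a chain of subobjects into every interval $Y_{j-1} \le Y_j$, or equivalently a chain of subobjects in the factor $Y_j/Y_{j-1}$. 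Transporting such a chain through the isomorphism $X_{\sigma^{-1}(j)}/X_{\sigma^{-1}(j)-1} \iso Y_j/Y_{j-1}$ and pulling it back along the quotient map $X_{\sigma^{-1}(j)} \twoheadrightarrow X_{\sigma^{-1}(j)}/X_{\sigma^{-1}(j)-1}$ produces a refinement of $(X_i)$ whose composition factors agree with those of $(\widetilde{Y}_\alpha)$. A symmetric construction yields a refinement of $(Z_\ell)$ matching $(\widetilde{Y}'_\alpha)$. Concatenating the matches shows the refined series on $X$ and $Z$ have equal length and matching composition factor multisets, giving $X \sim_S Z$.

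The main obstacle is the refinement-lifting step: one must confirm that pulling back a chain of subobjects of a composition factor $Y_j/Y_{j-1}$ along the quotient really does refine the original series, with the new factors exactly the prescribed ones. The lattice isomorphism theorem supplies this, since subobjects of $X_{\sigma^{-1}(j)}/X_{\sigma^{-1}(j)-1}$ correspond bijectively to subobjects of $X_{\sigma^{-1}(j)}$ containing $X_{\sigma^{-1}(j)-1}$, and successive quotients are preserved under this correspondence. This is exactly the feature absent in a general exact category, where admissible subobjects of a quotient need not lift to admissible subobjects of the ambient object, and it accounts for why the strengthening from Fact \ref{fct:=} to Fact \ref{fct:= ab} requires the abelian hypothesis.
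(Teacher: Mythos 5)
The paper does not prove this statement itself---it is quoted from \cite[Proposition 3.3]{Br98}---so there is no internal proof to compare against, but your argument is correct and takes the standard route that the cited proof also follows: reduce to transitivity of $\sim_S$ via Fact \ref{fct:=}, then establish transitivity by Schreier refinement (Zassenhaus lemma) on the two series of the middle object, lifting the resulting refinements across the given isomorphisms of factors by the lattice isomorphism theorem. Your closing observation that the lifting of subobject chains from a quotient $X_i/X_{i-1}$ back to $X_i$ is exactly what is unavailable in a general exact category correctly pinpoints why the strengthening from Fact \ref{fct:=} to Fact \ref{fct:= ab} needs $\catA$ abelian.
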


Using this description, we can obtain the following properties of the Grothendieck monoid.
\begin{fct}[{\cite[Proposition 3.5]{En22}}]\label{fct:sharp}
Let $\catE$ be an exact category.
\begin{enua}
\item
For an object $X \in \catE$,
we have that $[X]=0$ in $\M(\catE)$ if and only if $X\iso 0$.
\item
$\M(\catE)$ is sharp (see Definition \ref{dfn:property mon}).
\end{enua}
\end{fct}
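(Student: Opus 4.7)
The plan is to derive (1) from the concrete description of equality in $\M(\catE)$ given by Fact \ref{fct:=}, and then obtain (2) as a short corollary. For (1), the nontrivial direction is $[X]=0\imply X\iso 0$. Applying Fact \ref{fct:=} to $[X]=[0]$ produces a finite chain $X=X_0,X_1,\ldots,X_m=0$ with $X_{i-1}\sim_S X_i$ for each $i$, and I would argue by downward induction on $i$ that every $X_i$ is isomorphic to $0$. The anchor $X_m\iso 0$ is immediate. The key observation for the inductive step is that $0$ admits only the trivial admissible subobject: any inflation $W\infl 0$ sits in a conflation $0\to W\to 0\to Z\to 0$, forcing $W\iso 0$. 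Consequently every admissible subobject series of $0$ consists of zeros with all successive quotients zero. If $X_{i-1}\sim_S X_i\iso 0$, pick a witnessing admissible subobject series $0=Y_0\le Y_1\le\cdots\le Y_n=X_{i-1}$; its factors match those of some admissible series of $0$ up to permutation and isomorphism, so $Y_j/Y_{j-1}\iso 0$ for every $j$. A conflation $0\to Y_{j-1}\to Y_j\to 0\to 0$ with zero cokernel collapses to an isomorphism $Y_{j-1}\isoto Y_j$, and iterating yields $X_{i-1}\iso Y_n\iso Y_0=0$.

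For (2), the split conflation $0\to X\to X\oplus Y\to Y\to 0$ gives $[X]+[Y]=[X\oplus Y]$, so any element of $\M(\catE)$ written as a sum of classes is itself of the form $[W]$ for a single object $W$. Suppose $a+b=0$ in $\M(\catE)$ with $a=[X]$ and $b=[Y]$. Then $[X\oplus Y]=0$, so part (1) gives $X\oplus Y\iso 0$, and in any additive category this forces $X\iso 0$ and $Y\iso 0$, whence $a=b=0$.

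The main obstacle is executing the downward induction in (1) cleanly. The subtlety is that S-equivalence between $X_{i-1}$ and $0$ may be witnessed by admissible subobject series of arbitrary common length $n$, not only the length-zero series. The argument works because of the rigidity of the zero object: it has no nontrivial admissible subobjects, so every factor of every admissible series of $0$ vanishes. Once that is in hand, the collapse $Y_{j-1}\isoto Y_j$ propagates along the filtration without further effort, and the forwarding of this vanishing up the chain of S-equivalences produces $X\iso 0$.
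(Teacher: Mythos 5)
The paper does not prove this statement itself---it is quoted as a Fact from \cite[Proposition 3.5]{En22}---so there is no in-paper argument to diverge from; your proof is a correct, self-contained derivation from Fact \ref{fct:=} and is essentially the standard one. The two key points are both handled properly: the rigidity of the zero object (any inflation $W\infl 0$ has $W\iso 0$ by exactness in the ambient abelian category, so every admissible subobject series of an object isomorphic to $0$ has vanishing factors, and the collapse $Y_{j-1}\isoto Y_j$ then propagates along the chain of S-equivalences), and the reduction of (2) to (1) via $[X]+[Y]=[X\oplus Y]$ together with the fact that every element of $\M(\catE)$ is the class of a single object and that a direct summand of a zero object is zero.
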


Next, we investigate a functorial property of Grothendieck monoids.
An additive functor $F\colon \catD \to \catE$ between exact categories
is said to be \emph{exact} if for any conflation $0\to X\to Y\to Z \to 0$ in $\catD$,
the sequence $0\to FX\to FY\to FZ \to 0$ is also a conflation in $\catE$.
An \emph{exact equivalence} is an exact functor which has an exact quasi-inverse.
Note that a fully faithful and essentially surjective exact functor
is not necessarily an exact equivalence.

Let $F\colon \catD \to \catE$ be an exact functor between exact categories.
Define an additive function $\abs{\catD} \to \M(\catE)$ by $X \mapsto [FX]$.
It gives rise to a monoid homomorphism $\M(F) \colon \M(\catD) \to \M(\catE)$ 
by the universal property of $\M(\catD)$.
For two exact functors $F\colon \catE_1 \to \catE_2$ and $G\colon \catE_2 \to \catE_3$,
we have $\M(GF)=\M(G)\M(F)$.
It is easy to check that for any exact equivalence $F\colon \catD \simto \catE$,
the monoid homomorphism $\M(F) \colon \M(\catD) \to \M(\catE)$ is an isomorphism.

Finally, we compare the Grothendieck monoid $\M(\catE)$ with the Grothendieck group $\K_0(\catE)$.
\begin{rmk}\label{rmk:compare grp mon}
Let $\catE$ be an exact category.
\begin{enua}
\item
Recall that the \emph{Grothendieck group} $\K_0(\catE)$ of $\catE$ is defined by
\[
\K_0(\catE):=\bigoplus_{X\in \abs{\catE}}\bbZ X\Big{/}\la A-B+C 
\mid \text{$0\to A \to B\to C \to 0$ is a conflation} \ra.
\]
The image of $X\in\abs{\catE}$ in $\K_0(\catE)$ is denoted by $[X]$.
Then there is a natural monoid homomorphism
\[
\rho \colon \M(\catE) \to \K_0(\catE),\quad [X] \mapsto [X].
\]
We can easily check that 
$(\K_0(\catE), \rho \colon \M(\catE) \to \K_0(\catE))$ is the group completion
(see Definition \ref{dfn:gp}).
\item
The natural map $\rho$ is injective if and only if
$\M(\catE)$ is cancellative by Proposition \ref{prp:grp comp}.
In this case,
the Grothendieck monoid $\M(\catE)$ can be identified with
the positive part
\[
\K_0^+(\catE):=\{[X]\in \K_0(\catE) \mid X\in \catE\}
\]
of the Grothendieck group.
Thus if $\M(\catE)$ is cancellative,
the computation of $\M(\catE)$ becomes much easier.
However,
not much is known about the conditions for an exact category $\catE$
under which $\M(\catE)$ becomes cancellative.
Nevertheless, we prove that the Grothendieck monoid $\M(\vect C)$ 
of the category of vector bundles over a smooth projective curve
is cancellative in Proposition \ref{prp:M vect}.
\item
An element of $\M(\catE)$ can be expressed by $[X]$
for some single object $X\in \catE$,
while an element of $\K_0(\catE)$ can only be expressed by $[X]-[Y]$
for some objects $X, Y \in \catE$ in general.
It is an advantage of the Grothendieck monoid.
\item
Grothendieck monoids are more rigid than Grothendieck groups
in the sense that they are 
not invariant under triangulated equivalences of the bounded derived categories $\D^b(\catE)$
but only invariant under exact equivalences of exact categories
(see Example \ref{ex:compare M K_0}).
\end{enua}
\end{rmk}

\subsection{Serre subcategories and faces}\label{ss:Serre face}
Throughout this subsection, $\catE$ is an exact category.
Let us give a correspondence between subcategories of $\catE$ and subsets of $\M(\catE)$.
For a subcategory $\catD$ of $\catE$,
we define a subset of $\M(\catE)$ by
\[
\M_{\catD}:=\{[X] \in \M(\catE) \mid X\in \catD \}.
\]
For a subset $N\subseteq \M(\catE)$,
we define a subcategory of $\catE$ by
\[
\catD_{N}:=\{X\in \catE \mid [X] \in N\}.
\]
We will show that these assignments give a bijection between certain subcategories of $\catE$
and certain subsets of $\M(\catE)$.

\begin{dfn}
A subcategory $\catD$ of $\catE$ is said to be \emph{closed under S-equivalences} if
$X\sim_S Y$ and $X\in\catD$ implies $Y\in \catD$ for any $X,Y \in\catE$.
In this case, we also say that $\catD$ is an \emph{S-closed subcategory} for short.
\end{dfn}

The relation between the categories $\catD_N$ defined above 
and S-closed subcategories is as follows.
\begin{prp}\label{prp:S-closed bij}
The following hold.
\begin{enua}
\item
For a subset $N\subseteq \M(\catE)$,
the subcategory $\catD_{N}$ is closed under S-equivalences.
\item
The maps $\Phi \colon \catD \mapsto \M_{\catD}$ and $\Psi \colon N\mapsto \catD_N$ are 
mutually inverse bijections between
the set of $S$-closed subcategories and the power set of $\M(\catE)$.
\end{enua}
\end{prp}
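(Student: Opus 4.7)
The plan is to handle (1) first since it is a direct consequence of the observation made just before Fact~\ref{fct:=}, namely that if $X\sim_S Y$ then $[X]=[Y]$ in $\M(\catE)$. So if $N\subseteq \M(\catE)$, $X\in\catD_N$ and $X\sim_S Y$, then $[Y]=[X]\in N$, giving $Y\in \catD_N$. This makes $\Psi$ well-defined into the set of S-closed subcategories, and $\Phi$ is automatically well-defined into the power set.

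For (2), I would verify $\Phi\circ\Psi = \id$ and $\Psi\circ\Phi = \id$ separately. For $\M_{\catD_N}=N$: the inclusion $\M_{\catD_N}\subseteq N$ is immediate from the definitions. For the reverse, I use the fact that every element of $\M(\catE)$ is of the form $[X]$ for some single object $X\in\catE$; this follows because the set $\{[X] : X\in\catE\}$ contains $0=[0]$ and is closed under addition via $[X]+[Y]=[X\oplus Y]$, hence coincides with $\M(\catE)$. Thus given $n\in N$, choose $X\in\catE$ with $n=[X]$; then $X\in \catD_N$ and $n\in\M_{\catD_N}$.

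For $\catD_{\M_\catD}=\catD$ when $\catD$ is S-closed: the inclusion $\catD\subseteq \catD_{\M_\catD}$ is obvious. Conversely, if $X\in \catD_{\M_\catD}$, then $[X]\in \M_\catD$, so there exists $Y\in \catD$ with $[X]=[Y]$ in $\M(\catE)$. Apply Fact~\ref{fct:=} to obtain a chain $X=X_0,X_1,\dots,X_m=Y$ with $X_{i-1}\sim_S X_i$. Since $\sim_S$ is symmetric and $\catD$ is S-closed, a backward induction starting from $Y\in\catD$ yields $X_{m-1},\dots,X_0=X\in \catD$.

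The only step with any content is the realizability statement ``every element of $\M(\catE)$ equals $[X]$ for some $X$'', which is the main conceptual point; everything else is formal. I do not anticipate any genuine obstacle, so the proof should be a short, clean verification.
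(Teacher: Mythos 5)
Your proposal is correct and follows essentially the same route as the paper: part (1) is the one-line observation that $X\sim_S Y$ forces $[X]=[Y]\in N$, and part (2) checks the two compositions using Fact~\ref{fct:=} together with the fact that every element of $\M(\catE)$ is of the form $[X]$ for a single object. The only cosmetic difference is that you spell out this realizability fact (which the paper treats as clear, cf.\ Remark~\ref{rmk:compare grp mon}(3)) and you verify $\M_{\catD_N}\subseteq N$ directly from the definitions rather than via S-closedness; both variants are fine.
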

\begin{proof}
(1)
Let $X\in\catD_N$ and $Y\in \catE$ such that $X \sim_S Y$.
Then we have $[Y]=[X]\in N$,
and hence $Y$ also belongs to $\catD_N$,
which proves $\catD_N$ is S-closed.

(2)
We first prove that $\M_{\catD_N}=N$.
Clearly, we have $\M_{\catD_N}\supseteq N$.
Take $[X] \in \M_{\catD_N}$.
Then there exists $Y\in \catD_N$ such that $[X]=[Y]$.
Because $\catD_N$ is closed under S-equivalences by (1),
the object $X$ also belongs to $\catD_N$, and thus $[X]\in N$,
which shows that $\M_{\catD_N}=N$.
Next, we prove that $\catD_{\M_{\catD}}=\catD$ 
for an S-closed subcategory $\catD$ of $\catE$.
It is obvious that $\catD_{\M_{\catD}} \supseteq \catD$.
Take $X\in \catD_{\M_{\catD}}$.
We have $[X]\in \M_\catD$,
and then there exists $Y \in \catD$ such that $[X]=[Y]$.
Since $\catD$ is closed under S-equivalences, we obtain that $X\in\catD$,
which proves $\catD_{\M_{\catD}}=\catD$.
Therefore $\Phi$ and $\Psi$ are mutually inverse bijections.
\end{proof}

In what follows, we translate the properties of subcategories of $\catE$
into those of subsets of the Grothendieck monoid $\M(\catE)$.
We recall that properties of a subcategory of an exact category $\catE$.
\begin{itemize}
\item 
An additive subcategory $\catD$ of $\catE$ is said to be \emph{closed under direct summands}
if $X\oplus Y \in\catD$ implies that both $X$ and $Y$ belong to $\catD$
for any objects $X,Y\in \catE$.
\item 
An additive subcategory $\catS$ of $\catE$ is called a \emph{Serre subcategory}
if for any conflation $0\to X \to Y \to Z \to 0$ in $\catE$,
we have that $X,Z\in\catS$ if and only if $Y\in\catS$.
\end{itemize}
Serre subcategories are clearly conflation-closed.
Thus they are also exact categories.

\begin{lem}[{cf. \cite[Proposition 3.7]{En22}}]\label{lem:Serre S-closed}
A Serre subcategory $\catS$ of $\catE$ is closed under direct summands and S-equivalences.
\end{lem}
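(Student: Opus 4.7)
The plan is to handle the two closure properties separately, each via the two-way implication built into the definition of a Serre subcategory. Both arguments only require unwinding definitions and iterating conflations; I do not expect a real obstacle.

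For closure under direct summands, I would simply apply the Serre property to the split conflation $0\to X\to X\oplus Y\to Y\to 0$. If $X\oplus Y\in\catS$, then the direction ``middle term in $\catS$ implies both outer terms in $\catS$'' immediately yields $X,Y\in\catS$.

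For closure under S-equivalences, assume $X\in\catS$ and $X\sim_S Y$, witnessed by admissible subobject series $0=X_0\le X_1\le\cdots\le X_n=X$ and $0=Y_0\le Y_1\le\cdots\le Y_n=Y$ together with a permutation $\sg\in\symg_n$ satisfying $X_i/X_{i-1}\iso Y_{\sg(i)}/Y_{\sg(i)-1}$. I would proceed in three steps. First, descend along the $X$-series: starting from $X_n=X\in\catS$ and applying the ``middle $\imply$ outer'' direction to the conflation $0\to X_{i-1}\to X_i\to X_i/X_{i-1}\to 0$ for $i=n,n-1,\dots,1$, conclude by downward induction that every $X_i$ and every factor $X_i/X_{i-1}$ lies in $\catS$. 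Second, transport the factors to $Y$: since each $Y_{\sg(i)}/Y_{\sg(i)-1}$ is isomorphic to some $X_i/X_{i-1}\in\catS$ and $\catS$ is closed under isomorphism by our blanket convention on subcategories, every $Y_j/Y_{j-1}$ lies in $\catS$. Third, ascend along the $Y$-series: starting from $Y_0=0\in\catS$ and applying the ``outer $\imply$ middle'' direction to $0\to Y_{i-1}\to Y_i\to Y_i/Y_{i-1}\to 0$ for $i=1,\dots,n$, conclude by upward induction that $Y_i\in\catS$ for all $i$; in particular $Y=Y_n\in\catS$.

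The only (minor) subtlety is to invoke both directions of the Serre property in the right places: the descent along the $X$-series uses ``middle $\imply$ outer'', while the ascent along the $Y$-series uses ``outer $\imply$ middle''. The rest is a routine verification, and the admissible-subobject framework contributes nothing beyond packaging the inflations and their cokernels into honest conflations.
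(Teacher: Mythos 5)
Your proof is correct and follows essentially the same route as the paper's: the split conflation $0\to X\to X\oplus Y\to Y\to 0$ for direct summands, then descent along the $X$-series to put all factors $X_i/X_{i-1}$ in $\catS$, transport of the factors through the permutation, and ascent along the $Y$-series using conflation-closedness. Your write-up merely makes explicit the two inductions that the paper compresses into one sentence each.
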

\begin{proof}
Let $\catS$ be a Serre subcategory of $\catE$.
Then $\catS$ is closed under direct summands
since there is a conflation $0\to X \to X\oplus Y\to Y \to 0$ for any $X,Y\in\catE$.
We prove that $\catS$ is closed under S-equivalences.
Let $X\in \catS$ and $Y\in \catE$ satisfying $X \sim_S Y$.
There are admissible subobject series 
$0 = X_0 \le X_1 \le \cdots \le X_n=X$ and $0 = Y_0 \le Y_1 \le \cdots \le Y_n=Y$,
and a permutation $\sg\in\symg_n$ such that $X_i/X_{i-1} \iso Y_{\sg(i)}/Y_{\sg(i)-1}$ for all $i$.
Since $\catS$ is Serre and $X\in\catS$,
we have that $X_i/X_{i-1}\in \catS$,
which implies $Y_i/Y_{i-1}\in \catS$ for all $i$.
Thus we conclude that $Y$ belongs to $\catS$ since it is conflation-closed.
\end{proof}

\begin{lem}\label{lem:sub M}
The following hold.
\begin{enua}
\item
If $\catD$ is an additive subcategory of $\catE$,
then $\M_{\catD}$ is a submonoid of $\M(\catE)$.
\item
If $N$ is a submonoid of $\M(\catE)$,
then $\catD_N$ is a conflation-closed subcategory of $\catE$.
\item
If $\catS$ is a Serre subcategory,
then $\M_{\catS}$ is a face of $\M(\catE)$ (see Definition \ref{dfn:face}).
\item
If $F$ is a face of $\M(\catE)$,
then $\catD_F$ is a Serre subcategory of $\catE$.\end{enua}
\end{lem}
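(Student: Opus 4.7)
The plan is to verify the four items in order, since (1) and (2) are the ``monoid-theoretic shadows'' of the additive/conflation-closed structure, and then (3) and (4) upgrade these to the Serre/face correspondence.

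For (1), I would note that $0 \in \catD$ gives $0 = [0] \in \M_{\catD}$, and for any $[X], [Y] \in \M_{\catD}$ with $X, Y \in \catD$ we have $[X] + [Y] = [X \oplus Y]$; since additive subcategories are closed under finite direct sums, $X \oplus Y \in \catD$, so $[X] + [Y] \in \M_{\catD}$. For (2), $[0] = 0 \in N$ shows $0 \in \catD_N$, and closure under direct sums follows from $[X \oplus Y] = [X] + [Y]$; finally, given a conflation $0 \to X \to Y \to Z \to 0$ with $X, Z \in \catD_N$, the identity $[Y] = [X] + [Z]$ together with $N$ being a submonoid gives $Y \in \catD_N$.

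The main content is (3). By (1), $\M_{\catS}$ is already a submonoid, so the remaining point is the ``face'' implication: if $x + y \in \M_{\catS}$ for some $x, y \in \M(\catE)$, then $x, y \in \M_{\catS}$. Writing $x = [X]$, $y = [Y]$, we have $[X \oplus Y] = x + y = [S]$ for some $S \in \catS$. The key step is then to invoke Fact \ref{fct:=}, which provides a chain of objects connecting $X \oplus Y$ and $S$ by S-equivalences, combined with Lemma \ref{lem:Serre S-closed}, which says $\catS$ is closed under S-equivalences and direct summands. Propagating along the chain yields $X \oplus Y \in \catS$, and then direct-summand closure gives $X, Y \in \catS$, i.e.\ $x, y \in \M_{\catS}$.

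For (4), conflation-closure of $\catD_F$ is (2) applied to the submonoid $F$, so the only new content is the Serre converse: if $0 \to X \to Y \to Z \to 0$ is a conflation with $Y \in \catD_F$, then $[X] + [Z] = [Y] \in F$, and the face property of $F$ immediately yields $[X], [Z] \in F$, i.e.\ $X, Z \in \catD_F$. The main obstacle in the whole argument is really the face direction of (3), where one must pass from an equality in $\M(\catE)$ to an actual membership in $\catS$; this is exactly where the interplay between Fact \ref{fct:=} and the S-closure of Serre subcategories (Lemma \ref{lem:Serre S-closed}) becomes essential, and where one sees why ``S-closed'' is the right intermediate notion.
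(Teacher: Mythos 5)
Your proof is correct and follows essentially the same route as the paper: items (1), (2), (4) are the immediate verifications, and the substance of (3) is exactly the paper's argument — find $S\in\catS$ with $[S]=[X\oplus Y]$, use Fact \ref{fct:=} together with the S-closedness of $\catS$ (Lemma \ref{lem:Serre S-closed}) to conclude $X\oplus Y\in\catS$, and then use closure under direct summands. Your explicit propagation along the chain of S-equivalences is just a slightly more detailed spelling-out of what the paper compresses into ``because $\catS$ is S-closed.''
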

\begin{proof}
(1), (2) and (4) follow immediately.
We only prove (3).
Let $\catS$ be a Serre subcategory of $\catE$.
Then $\M_{\catS}$ is a submonoid of $\M(\catE)$ by (1).
Suppose that $[X]+[Y] \in \M_{\catS}$ for some objects $X,Y\in\catE$.
Then there exists $Z\in \catS$ such that $[Z]=[X]+[Y]=[X\oplus Y]$.
This yields $X\oplus Y \in\catS$ because $\catS$ is S-closed.
Since $\catS$ is closed under direct summands,
both $X$ and $Y$ belong to $\catS$,
and hence we have that both $[X] \in \M_{\catS}$ and $[Y]\in\M_{\catS}$.
This proves $\M_\catS$ is a face of $\M(\catE)$.
\end{proof}


\begin{cor}\label{cor:Serre bij}
The bijections in Proposition \ref{prp:S-closed bij} restricts to
inclusion-preserving bijections between the following sets:
\begin{itemize}
\item
The set $\Serre(\catE)$ of Serre subcategories of $\catE$.
\item
the set $\Face(\M(\catE))$ of faces of $\M(\catE)$.
\end{itemize}
\end{cor}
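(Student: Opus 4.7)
The proof proposal is essentially to assemble the pieces already established. The plan is to restrict the bijections $\Phi \colon \catD \mapsto \M_{\catD}$ and $\Psi \colon N \mapsto \catD_N$ of Proposition \ref{prp:S-closed bij} to the desired subsets on each side and then verify that the restrictions still land where we want.

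First, I would observe that any Serre subcategory of $\catE$ lies in the domain of $\Phi$: by Lemma \ref{lem:Serre S-closed}, a Serre subcategory is closed under S-equivalences, so it is in particular an S-closed subcategory. Hence $\Phi$ makes sense on $\Serre(\catE)$, and Lemma \ref{lem:sub M}(3) tells us that $\Phi(\catS) = \M_{\catS}$ is a face of $\M(\catE)$, so the restriction indeed maps into $\Face(\M(\catE))$. Conversely, for any subset $N$ of $\M(\catE)$ (in particular for any face), $\Psi(N) = \catD_N$ is an S-closed subcategory by Proposition \ref{prp:S-closed bij}(1); and when $N$ is a face, Lemma \ref{lem:sub M}(4) upgrades this to $\catD_N$ being a Serre subcategory. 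So $\Psi$ restricts to a map $\Face(\M(\catE)) \to \Serre(\catE)$.

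Next, since $\Phi$ and $\Psi$ are mutually inverse on the larger pair of sets by Proposition \ref{prp:S-closed bij}(2), the restrictions to $\Serre(\catE)$ and $\Face(\M(\catE))$ are automatically mutually inverse, giving the claimed bijection.

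Finally, inclusion-preservation is immediate from the definitions: if $\catS_1 \subseteq \catS_2$ are Serre subcategories, then $\M_{\catS_1} \subseteq \M_{\catS_2}$ by definition of $\M_{(-)}$, and similarly $F_1 \subseteq F_2$ implies $\catD_{F_1} \subseteq \catD_{F_2}$. There is no substantial obstacle here: all the real work was done in Lemma \ref{lem:Serre S-closed} and Lemma \ref{lem:sub M}(3)(4), and this corollary is a packaging of those results together with Proposition \ref{prp:S-closed bij}.
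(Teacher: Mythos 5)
Your proposal is correct and follows exactly the paper's argument: the paper's proof simply cites Proposition \ref{prp:S-closed bij}, Lemma \ref{lem:Serre S-closed} and Lemma \ref{lem:sub M}, and your write-up is a careful unpacking of how those three results combine. Nothing is missing.
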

\begin{proof}
It follows from Proposition \ref{prp:S-closed bij}, Lemma \ref{lem:Serre S-closed} and Lemma \ref{lem:sub M}.
\end{proof}

We think of classifying Serre categories of an exact category
as a special case of classifying faces of a monoid
by this corollary.
Following this philosophy, we classify faces of an abstract monoid
and apply it to classify Serre subcategories in \S \ref{s:ex fd alg} and \ref{s:ex curve}.

Finally, we compare $\M_{\catS}$ with $\M(\catS)$ for a Serre subcategory $\catS$.
\begin{prp}\label{prp:inj Serre}
Let $\catS$ be a Serre subcategory of $\catE$,
and let $\iota \colon \catS \inj\catE$ be the natural inclusion functor.
Then $\M(\iota) \colon \M(\catS) \to \M(\catE)$ is an injective monoid homomorphism
whose image is $\M_{\catS} \subseteq \M(\catE)$.
\end{prp}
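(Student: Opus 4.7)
The plan is to verify three things: that $\M(\iota)$ is a well-defined monoid homomorphism, that its image equals $\M_{\catS}$, and that it is injective. The first two are routine; the real content is in injectivity.

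First, I would observe that the inclusion $\iota\colon\catS\inj\catE$ is an exact functor, because conflations in $\catS$ (with its induced exact structure) are by definition precisely the conflations in $\catE$ whose three terms lie in $\catS$. Hence $\M(\iota)$ is a monoid homomorphism sending $[X]_{\catS}$ to $[X]_{\catE}$. For the image, every element of $\M(\catS)$ can be written as $[X]_{\catS}$ for a single object $X\in\catS$ (using finite direct sums), so the image of $\M(\iota)$ equals $\{[X]_{\catE}\mid X\in\catS\}=\M_{\catS}$ by definition.

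The main step is injectivity. Suppose $X,Y\in\catS$ satisfy $[X]_{\catE}=[Y]_{\catE}$. By Fact \ref{fct:=} applied to $\catE$, there exists a sequence $X=Z_0,Z_1,\dots,Z_m=Y$ of objects in $\catE$ with $Z_{i-1}\sim_S Z_i$ in $\catE$ for each $i$. I claim each $Z_i$ lies in $\catS$ and that the witnessing admissible subobject series can be regarded as series in $\catS$. For the first claim, $\catS$ is closed under S-equivalences by Lemma \ref{lem:Serre S-closed}, so $Z_0=X\in\catS$ forces $Z_1\in\catS$, and so on by induction. For the second claim, suppose $0=W_0\le W_1\le\cdots\le W_n=Z_i$ is an admissible subobject series in $\catE$ with $Z_i\in\catS$. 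Each inflation $W_{j}\infl W_{j+1}$ fits into a conflation in $\catE$; applying the Serre property inductively to the conflations $0\to W_j\to Z_i\to Z_i/W_j\to 0$ shows $W_j\in\catS$ and $Z_i/W_j\in\catS$, and then each quotient $W_j/W_{j-1}\in\catS$ again by the Serre property. Hence every arrow in the series is an inflation in $\catS$, so the series lives in $\catS$.

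Consequently, the relation $Z_{i-1}\sim_S Z_i$ also holds inside $\catS$, so by Fact \ref{fct:=} applied now to $\catS$ we obtain $[X]_{\catS}=[Y]_{\catS}$, proving injectivity. The only delicate point is the second claim above: one must check that an admissible subobject series in $\catE$ whose endpoint lies in $\catS$ actually consists of admissible subobjects in $\catS$, which is exactly where the Serre hypothesis (closure under subobjects and quotients in conflations) gets used in an essential way.
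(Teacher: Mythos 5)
Your proof is correct and follows essentially the same route as the paper: reduce injectivity via Fact \ref{fct:=} to showing that S-equivalence in $\catE$ between objects of $\catS$ implies S-equivalence in $\catS$, and establish this by the same argument as in Lemma \ref{lem:Serre S-closed} (all terms and subquotients of the witnessing series lie in $\catS$, so the series lives in $\catS$). The paper leaves this last verification as "the same consideration as in Lemma \ref{lem:Serre S-closed}," which you have simply written out in full.
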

\begin{proof}
It is clear that the image of $M(\iota)$ is $\M_{\catS}$.
Thus we only show that $\M(\iota)$ is injective.
By Lemma \ref{lem:Serre S-closed} and Fact \ref{fct:=},
it is enough to show that
for any $X,Y \in\catS$,
if $X$ and $Y$ are S-equivalent in $\catE$,
then they are also S-equivalent in $\catS$.
The same consideration as in Lemma \ref{lem:Serre S-closed} works to prove this.
\end{proof}
For a Serre subcategory $\catS$ of $\catE$,
we often identify $\M(\catS)$ with the face $\M_{\catS}$ of $\M(\catE)$
by this proposition.

\begin{rmk}\label{rmk:Brookfield}
Corollary \ref{cor:Serre bij} and Proposition \ref{prp:inj Serre} are not entirely new.
Those are originally mentioned in \cite[Proposition 16.8]{Br97} for the category $\Mod \LL$
of \emph{all} modules over a (\emph{not necessarily noetherian}) ring $\LL$.
Brookfield defined the Grothendieck monoids $\M(\catS)$ 
for Serre subcategories $\catS$ of $\Mod \LL$.
He studied mainly the case $\catS=\noeth \LL$,
the category of noetherian $\LL$-modules,
and used the bijection to identify $\M(\noeth \LL)$
with the face $\M_{\noeth \LL} \subseteq \M(\Mod \LL)$ in our terminologies.
Our approach using the notion of S-closed subcategories is quite different from Brookfield's one
and has a broader application for classifying certain subcategories.
See, for example, \cite[Section 3.2]{ES}.
\end{rmk}

\section{The case of exact categories related to finite dimensional algebras}\label{s:ex fd alg}
In this section,
we give concrete examples of classifying Serre subcategories of an exact category $\catE$ 
by using its Grothendieck monoid $\M(\catE)$.
Our strategy is the following:
\begin{enua}
\item
Relate the Grothendieck monoid $\M(\catE)$ with an abstract monoid $M$.
\item
Classify faces of the abstract monoid $M$.
\item
Classify Serre subcategories of $\catE$ by using (1) and (2).
\end{enua}
Although we think that some results in this section are well-known to experts,
we give the proofs from the viewpoint of Grothendieck monoids.

In \S \ref{ss:more on face}, 
we classify faces of some abstract monoid generated by a subset.
In \S \ref{ss:length exact}, 
we classify Serre subcategories of an exact category with finiteness conditions 
by using the result of \S \ref{ss:more on face}.
In particular, we give an explicit example of classifying Serre subcategories
of a complicated exact category related to a finite dimensional algebra
(Example \ref{ex:explicit fd}).
\subsection{More properties on faces}\label{ss:more on face}
Hereafter $M$ is a monoid.
We first give a description of the face generated by a subset.
This explicit description is useful to study faces.
\begin{fct}[{\cite[Proposition I.1.4.2]{Og}}]\label{fct:description face}
Let $S$ be a subset of $M$.
\begin{enua}
\item
The smallest submonoid of $M$ containing $S$
is equal to
\[
\gen{S}_{\bbN}
:=\gen{x \mid x\in S}_{\bbN}
:=\left\{\sum_{i=1}^m n_ix_i \middle|\; m,n_i \in \bbN, x_i\in S  \right\}.
\]
We call it the \emph{submonoid of $M$ generated by $S$}.
\item
The smallest face of $M$ containing $S$
is equal to
\[
\la S \ra_{\face}
:=\gen{x \mid x\in S}_{\face}
:=\left\{x\in M \mid
\text{there exists $y\in M$ such that $x+y\in \la S \ra_{\bbN}$} \right\}.
\]
We call it the \emph{face of $M$ generated by $S$}.
\end{enua}
\end{fct}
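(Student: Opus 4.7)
The plan is to verify the two claimed descriptions by exhibiting explicit candidates and checking universality. Part (1) is essentially bookkeeping: the set $\langle S\rangle_{\bbN}$ on the right is clearly closed under $+$ and contains $0$ (as the empty sum), hence is a submonoid; it contains $S$ by taking one-term sums with $n_i=1$; and any submonoid containing $S$ must be closed under finite $\bbN$-linear combinations, so it contains $\langle S\rangle_{\bbN}$. This gives the minimality.

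For part (2) the plan has three steps. First, verify that the set
\[
F := \{x \in M \mid \exists\, y \in M,\ x+y \in \langle S\rangle_{\bbN}\}
\]
is a face of $M$. It is nonempty since $0 \in \langle S\rangle_{\bbN}$ (witness $y=0$), and it is closed under addition because if $x+y$ and $x'+y'$ lie in the submonoid $\langle S\rangle_{\bbN}$, then $(x+x')+(y+y')$ does too. For the face condition, suppose $x+x' \in F$; then there is some $y \in M$ with $(x+x')+y \in \langle S\rangle_{\bbN}$, and rewriting this as $x + (x'+y)$ exhibits $x \in F$, and symmetrically $x' \in F$. The converse direction is just closure under addition.

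Second, $F$ contains $S$: for any $s \in S$, take $y = 0$ so that $s + 0 = s \in \langle S\rangle_{\bbN}$. Third, I claim $F$ is the smallest such face. Let $F'$ be any face containing $S$. Since $F'$ is in particular a submonoid, part (1) gives $\langle S\rangle_{\bbN} \subseteq F'$. Now for an arbitrary $x \in F$, choose $y$ with $x+y \in \langle S\rangle_{\bbN} \subseteq F'$; the defining face property of $F'$ then forces $x \in F'$ (and incidentally $y \in F'$). Hence $F \subseteq F'$, completing the minimality argument.

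There is no real obstacle here; the only mildly subtle point is to notice that the face condition applied \emph{once} inside $F'$ is enough to conclude $x \in F'$ from $x+y \in F'$, which is precisely the content of the face axiom. All other steps are direct manipulations of the definitions of submonoid and face.
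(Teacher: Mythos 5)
Your proof is correct and complete: part (1) is the routine verification, and for part (2) you correctly check that the candidate set is a submonoid containing $S$, satisfies the face axiom via the rewriting $(x+x')+y = x+(x'+y)$, and is minimal because any face containing $S$ contains $\gen{S}_{\bbN}$ and is closed under passing from a sum to its summands. Note that the paper gives no proof of this statement at all—it is quoted as a Fact from \cite[Proposition I.1.4.2]{Og}—and your argument is exactly the standard one found there.
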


We study the relationship between faces of $M$ and those of its quotient monoid $M/N$.
Unlike the case of vector spaces,
the submonoids of $M/N$ do not correspond to
the submonoids of $M$ containing $N$.
\begin{ex}
Let $M:=\bbN^{\oplus 2}$ and $N:=\bbN(1,0)+\bbN(1,1) \subseteq M$.
Then we have $M/N = 0$ but $M$ and $N$ themselves are distinct submonoids of $M$ containing $N$.
\end{ex}

However, we have a bijection for faces.
\begin{prp}\label{prp:face quot}
Let $N$ be a submonoid of $M$,
and let $\pi \colon M \to M/N$ be the quotient homomorphism.
\begin{enua}
\item
If $F$ is a face of $M$ containing $N$,
then $F/N:=\pi(F)$ is also a face of $M/N$.
\item
If $F'$ is a face of $M/N$,
then $\pi^{-1}(F')$ is also a face of $M$ containing $N$.
\item
The assignments given in (1) and (2) give inclusion-preserving bijections
between the set of faces of $M$ containing $N$ and that of $M/N$.
\end{enua}
\end{prp}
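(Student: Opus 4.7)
The plan is to unwind the definition of the equivalence relation $\sim_N$ that defines the quotient, and then verify each face condition by reducing to it. Throughout, the key technical tool will be: for $x,y \in M$, $\pi(x) = \pi(y)$ in $M/N$ if and only if there exist $n, n' \in N$ with $x + n = y + n'$. Combined with the containment $N \subseteq F$ in (1), this will let face conditions pass between $M$ and $M/N$ cleanly.

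For (1), I would start with a face $F \supseteq N$ and show that $\pi(F)$ is a submonoid (immediate, as $\pi$ is a monoid homomorphism) that satisfies the face axiom. Suppose $\pi(x) + \pi(y) \in \pi(F)$; then $\pi(x+y) = \pi(f)$ for some $f \in F$, so $x + y + n = f + n'$ for some $n, n' \in N$. Since $N \subseteq F$ and $F$ is closed under addition, the right-hand side lies in $F$, hence $x + y + n \in F$. Applying the face property of $F$ twice (first to split off $n$, then to split $x+y$) yields $x, y \in F$, so $\pi(x), \pi(y) \in \pi(F)$.

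For (2), with $F'$ a face of $M/N$, the preimage $\pi^{-1}(F')$ contains $N$ because $\pi(N) = \{0\} \subseteq F'$, and it is a submonoid since $F'$ is. The face condition is immediate: if $x + y \in \pi^{-1}(F')$, then $\pi(x) + \pi(y) \in F'$, hence $\pi(x), \pi(y) \in F'$ by the face property of $F'$, so $x, y \in \pi^{-1}(F')$.

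For (3), the identity $\pi(\pi^{-1}(F')) = F'$ follows from surjectivity of $\pi$, so the only content is the converse $\pi^{-1}(\pi(F)) = F$ for a face $F$ containing $N$. This is where I expect the subtlety to lie: the inclusion $\supseteq$ is trivial, but for $\subseteq$, given $x$ with $\pi(x) = \pi(f)$ for some $f \in F$, we again get $n, n' \in N$ with $x + n = f + n'$; since $N \subseteq F$, the right-hand side is in $F$, so $x + n \in F$, and the face axiom then forces $x \in F$. Preservation of inclusion in both directions is formal from the definitions, finishing the bijection.
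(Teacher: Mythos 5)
Your proposal is correct and follows essentially the same route as the paper's proof: unwinding the definition of $\sim_N$ to get $x+y+n = f+n'$ (resp.\ $x+n = f+n'$), observing that the right-hand side lies in $F$ because $N\subseteq F$, and then applying the face axiom to conclude. The only cosmetic difference is that the paper omits the routine part (2) and phrases (3) as $\pi^{-1}(F/N)\subseteq F$ rather than $\pi^{-1}(\pi(F))=F$, which is the same statement.
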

\begin{proof}
We only prove (1) and (3) since the proof of (2) is straightforward.

(1)
It is clear that $F/N$ is a submonoid of $M/N$.
Let $a,b\in M$ such that $\pi(a)+\pi(b) \in F/N$.
Then there exist $x\in F$ and $n, n'\in N$ such that $x+n=(a+b)+n'$ in $M$.
Since $x+n \in F$ and $F$ is a face of $M$, we have that $a,b \in F$.
Thus both $\pi(a)$ and $\pi(b)$ belong to $F/N$,
which proves $F/N$ is a face.

(3)
We have that $\pi^{-1}(F')/N=\pi(\pi^{-1}(F'))=F'$ since $\pi$ is surjective.
It is easy to check that $\pi^{-1}(F/N)\supseteq F$.
It remains to show that $\pi^{-1}(F/N) \subseteq F$.
Let $a \in \pi^{-1}(F/N)$.
Then we have $\pi(a)\in F/N$.
There exist $x\in F$ and $n, n'\in N$ such that $x+n=a+n'$.
Since $x+n \in F$ and $F$ is a face, we have $a\in F$,
which proves $\pi^{-1}(F/N) \subseteq F$.
\end{proof}

\begin{cor}\label{cor:face unit}
For a submonoid $N$ of $M$,
there is an inclusion-preserving bijection between $\Face(M/N)$ and $\Face(M/\la N \ra_{\face})$.
In particular,
we have an inclusion-preserving bijection $\Face(M) \iso \Face(M/M^{\times})$,
where $M^{\times}$ is the set of units of $M$.
\end{cor}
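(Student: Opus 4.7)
The plan is to reduce everything to Proposition \ref{prp:face quot} by observing that forming the quotient by $N$ is ``the same'' for the face-lattice as forming the quotient by the face generated by $N$.

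First, I would apply Proposition \ref{prp:face quot} twice: once to $N$ itself and once to $\la N \ra_{\face}$. This gives inclusion-preserving bijections between $\Face(M/N)$ and the set of faces of $M$ containing $N$, and between $\Face(M/\la N \ra_{\face})$ and the set of faces of $M$ containing $\la N \ra_{\face}$. Thus it suffices to show that a face $F$ of $M$ contains $N$ if and only if it contains $\la N \ra_{\face}$. The nontrivial direction uses Fact \ref{fct:description face}(2): if $F \supseteq N$ is a face and $x \in \la N \ra_{\face}$, then there exists $y \in M$ with $x + y \in \la N \ra_{\bbN} \subseteq F$, and the face property forces $x \in F$. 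Composing the two bijections yields the desired inclusion-preserving bijection between $\Face(M/N)$ and $\Face(M/\la N \ra_{\face})$.

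For the ``in particular'' part, I would specialize to $N = M^{\times}$. By Remark \ref{rmk:Face}, $M^{\times}$ is already the smallest face of $M$, so $\la M^{\times}\ra_{\face} = M^{\times}$ and every face of $M$ automatically contains $M^{\times}$. Hence the set of faces of $M$ containing $M^{\times}$ equals $\Face(M)$, and applying Proposition \ref{prp:face quot} to $N = M^{\times}$ gives an inclusion-preserving bijection $\Face(M) \iso \Face(M/M^{\times})$.

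The whole argument is essentially a compatibility check, so there is no real obstacle; the only subtle point to verify carefully is that ``face contains $N$'' and ``face contains $\la N \ra_{\face}$'' define the same subset of $\Face(M)$, which is immediate from the extremal characterization of $\la N \ra_{\face}$ as the smallest face containing $N$.
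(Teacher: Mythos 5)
Your proof is correct and follows essentially the same route as the paper, which simply cites Remark \ref{rmk:Face} and Proposition \ref{prp:face quot}; you have just spelled out the intended details (that the faces containing $N$ are exactly the faces containing $\la N\ra_{\face}$, by minimality of the generated face). No gaps.
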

\begin{proof}
It follows from Remark \ref{rmk:Face} and Proposition \ref{prp:face quot}.
\end{proof}

Let $f\colon M\to N$ be a monoid homomorphism.
For any face $F$ of $N$,
the inverse image $f^{-1}(F)$ is also a face of $M$.
Thus we have an inclusion-preserving map $\Face(f)\colon \Face(N) \to \Face(M)$.
The following lemma is obvious but useful.
\begin{lem}\label{lem:surj induce inj on face}
The map $\Face(f)\colon \Face(N) \to \Face(M)$ is injective
for a surjective monoid homomorphism $f\colon M \to N$.
\end{lem}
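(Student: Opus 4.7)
The plan is to observe that the injectivity of $\Face(f)$ is really just a set-theoretic fact about preimages under a surjection, rather than something that uses the face structure in any essential way. Concretely, I would recover a face $F \subseteq N$ from its preimage $f^{-1}(F)$ by applying $f$ again.

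First, I would record the following standard fact: if $f\colon M \to N$ is a surjective map of sets and $F$ is any subset of $N$, then $f(f^{-1}(F)) = F$. The inclusion $f(f^{-1}(F)) \subseteq F$ is automatic, and for the reverse inclusion, given $n \in F$, surjectivity of $f$ produces some $m \in M$ with $f(m) = n$; then $m \in f^{-1}(F)$, so $n = f(m) \in f(f^{-1}(F))$.

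Next, suppose $F_1$ and $F_2$ are faces of $N$ with $\Face(f)(F_1) = \Face(f)(F_2)$, i.e.\ $f^{-1}(F_1) = f^{-1}(F_2)$. Applying $f$ to both sides and using the fact above yields $F_1 = f(f^{-1}(F_1)) = f(f^{-1}(F_2)) = F_2$, so $\Face(f)$ is injective.

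There is essentially no obstacle here; the only mild subtlety is to make sure that the argument does not require $F_1, F_2$ to be faces (it does not — the conclusion holds for arbitrary subsets of $N$), so the face hypothesis only enters in defining the map $\Face(f)$ in the first place. I expect the write-up to be a single short paragraph.
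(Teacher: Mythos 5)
Your proof is correct: the injectivity of $F \mapsto f^{-1}(F)$ is indeed the purely set-theoretic fact that $f(f^{-1}(F)) = F$ for a surjection, and the face structure is only needed (via the preceding discussion in the paper) to know the map $\Face(f)$ is well defined. The paper dismisses this lemma with ``It is straightforward,'' and what you have written is precisely the standard argument being alluded to.
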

\begin{proof}
It is straightforward.
\end{proof}


Let us consider a finiteness condition on a monoid
and classify faces of a monoid satisfying it.
\begin{dfn}\hfill
\begin{enua}
\item
$M$ is \emph{finitely generated}
if $M=\gen{S}_{\bbN}$ for a finite subset $S$ of $M$.
\item
A face $F$ of $M$ is \emph{finitely generated}
if $F=\gen{S}_{\face}$ for a finite subset $S$ of $F$.
\end{enua}
\end{dfn}

\begin{rmk}\hfill
\begin{enua}
\item
If $M$ is finitely generated, then it is finitely generated as a face.
\item
A face $F$ of $M$ is finitely generated if and only if
$F=\gen{x}_{\face}$ for some element $x\in M$.
Indeed, if $F=\gen{S}_{\face}$ for a finite subset $S$ of $M$,
then we can easily see that $F=\gen{\sum_{s\in S} s}_{\face}$.
\end{enua}
\end{rmk}

\begin{lem}\label{lem:gen face}
If $M$ is generated by a (not necessarily finite) subset $S\subseteq M$,
then the map
\[
\gen{-}_{\face} \colon \pow(S) \to \Face(M),\quad 
A \mapsto \gen{A}_{\face}
\]
is an inclusion-preserving surjection,
where $\pow(S)$ is the power set of $S$.
\end{lem}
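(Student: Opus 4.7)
The plan is to show both properties directly from the description of $\gen{-}_{\face}$ in Fact \ref{fct:description face}. Inclusion-preservation is immediate: if $A \subseteq B \subseteq S$, then every element of $\gen{A}_{\bbN}$ lies in $\gen{B}_{\bbN}$, so the condition ``there exists $y\in M$ with $x+y \in \gen{A}_{\bbN}$'' forces the analogous condition with $B$, giving $\gen{A}_{\face} \subseteq \gen{B}_{\face}$.

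For surjectivity, given a face $F$ of $M$, the natural candidate is $A := F \cap S$. The inclusion $\gen{A}_{\face} \subseteq F$ is clear because $F$ is itself a face of $M$ containing $A$, and $\gen{A}_{\face}$ is the smallest such face.

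The reverse inclusion $F \subseteq \gen{A}_{\face}$ is where the face axiom does the real work. I would take $x \in F$ and use the assumption $M = \gen{S}_{\bbN}$ to write $x = \sum_{i=1}^m n_i s_i$ with $n_i \in \bbN$ and $s_i \in S$. The defining property of a face, applied iteratively to the decomposition $x = s_{i_1} + s_{i_2} + \cdots + s_{i_k}$ (each $s_i$ repeated $n_i$ times), yields that each $s_i$ with $n_i > 0$ lies in $F$, hence in $A = F \cap S$. Therefore $x \in \gen{A}_{\bbN} \subseteq \gen{A}_{\face}$, completing the argument.

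I do not expect any real obstacle here; the only subtle point is making sure that the face axiom applies to arbitrary finite sums (not just two summands), which is an immediate induction on the number of summands from Definition \ref{dfn:face}. So the entire proof boils down to a short verification once $A = F \cap S$ is proposed as the preimage.
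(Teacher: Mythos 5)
Your proof is correct and follows essentially the same route as the paper: the paper likewise takes $S_F := \{x \in S \mid x \in F\} = F \cap S$ as the preimage, writes $x \in F$ as $\sum_{i=1}^m n_i s_i$ via Fact \ref{fct:description face}, and applies the face axiom (iterated over the summands) to conclude each $s_i \in F$, so that $x \in \gen{S_F}_{\bbN} \subseteq \gen{S_F}_{\face}$. The only cosmetic difference is that the paper reduces to $F \ne 0$ and $0 \ne x$ with all $n_i \ne 0$, which your treatment of the terms with $n_i > 0$ covers equally well.
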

\begin{proof}
Let $F$ be a face of $M$ and set $S_F:=\{x\in S \mid x \in F \}$.
We want to show that $F=\gen{S_F}_{\face}$.
We may assume that $F \ne 0$.
It is clear that $F\supseteq \gen{S_F}_{\face}$.
Take $0\ne x\in F$.
Then $x = \sum_{i=1}^m n_i s_i$ for some $s_i\in S$ and $0 \ne n_i\in \bbN$
by Fact \ref{fct:description face}.
Since $F$ is a face, we obtain that $s_i \in F$ for all $i$,
which shows $x\in \gen{S_F}_{\face}$.
Thus we conclude that $F=\gen{S_F}_{\face}$ and 
$\gen{-}_{\face} \colon \pow(S) \to \Face(M)$ is surjective.
Note that we actually prove $F=\gen{S_F}_{\bbN}$.
\end{proof}

\begin{cor}
If $M$ is finitely generated, then $\Face(M)$ is a finite set.
\end{cor}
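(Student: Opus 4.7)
The plan is to apply Lemma \ref{lem:gen face} directly. Since $M$ is finitely generated by hypothesis, I can choose a finite subset $S \subseteq M$ with $M = \gen{S}_{\bbN}$. The lemma then produces a surjection
\[
\gen{-}_{\face} \colon \pow(S) \twoheadrightarrow \Face(M),
\]
and because $S$ is finite, the power set $\pow(S)$ has cardinality $2^{|S|} < \infty$. A set that admits a surjection from a finite set is itself finite, so $\Face(M)$ is finite.

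There is essentially no obstacle here: the content is entirely packaged in Lemma \ref{lem:gen face}, and the corollary is the immediate specialization to the case where $|S| < \infty$. The only thing worth noting is the (sharper) bound $|\Face(M)| \le 2^{|S|}$, though the statement asks only for finiteness. This bound is typically far from tight, since distinct subsets of $S$ can generate the same face, as already implicit in the proof of Lemma \ref{lem:gen face} where one recovers $F = \gen{S_F}_{\face}$ from the intersection $S_F = S \cap F$.
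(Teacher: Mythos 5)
Your proof is correct and follows exactly the same route as the paper: invoke Lemma \ref{lem:gen face} to obtain the surjection $\pow(S)\twoheadrightarrow\Face(M)$ for a finite generating set $S$, and conclude finiteness from the finiteness of $\pow(S)$. The extra remark about the bound $2^{|S|}$ not being tight is accurate but not needed.
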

\begin{proof}
There is a finite subset $S$ of $M$ such that $M=\gen{S}_{\bbN}$
because $M$ is finitely generated.
Then $\pow(S)$ is also a finite set,
and we conclude that $\Face(M)$ is a finite set by Lemma \ref{lem:gen face}.
\end{proof}

\begin{ex}\label{ex:face free monoid}
Let $M$ be a free monoid with a basis $\{e_i\mid i\in I\}$.
Then it is clear that 
the map $\gen{-}_{\face} \colon \pow(\{e_i\mid i\in I\}) \to \Face(M)$ is bijective.
\end{ex}

\subsection{Serre subcategories of lenght exact categories}\label{ss:length exact}
In this subsection,
we study the Grothendieck monoid of an exact category with finiteness conditions
and apply it to classify Serre subcategories of exact categories 
related to finite dimensional algebras.

We quickly review terminologies related to composition series
to introduce finiteness conditions of exact categories.
Let $\catE$ be an exact category.
An admissible subobject series $0 = X_0 \le X_1 \le \cdots \le X_n=X$ of $X\in \catE$
is \emph{proper} if $X_{i+1}/X_{i}\ne 0$ for all $i$.
In this case, we say that this proper admissible subobject series has \emph{length $n$}.
A nonzero object $X \in \catE$ is said to be \emph{simple}
if it has no admissible subobject except $0$ and $X$ itself.
We denote by $\simp \catE$ the set of isomorphism classes of simple objects of $\catE$.
An admissible subobject series $0 = X_0 \le X_1 \le \cdots \le X_n=X$ of $X\in \catE$
is a \emph{composition series} if $X_{i+1}/X_{i}$ is simple for all $i$.
\begin{dfn}\label{dfn:length}
Let $\catE$ be an exact category.
\begin{enua}
\item
An object $X$ of $\catE$ is \emph{of finite length}
if the lengths of proper admissible subobject series of $X$ have an upper bound.
\item
$\catE$ is said to be \emph{length} if every object in $\catE$ is of finite length.
\item
A length exact category $\catE$ \emph{satisfies the Jordan-H\"{o}lder property}
if, for every $X\in\catE$, all composition series of $X$ are isomorphic to each other.
\end{enua}
\end{dfn}

Note that any object of finite length has a composition series
since proper admissible subobject series of a maximal length 
are composition series (see \cite[Proposition 2.5]{En22}).

\begin{ex}\label{ex:length ex}
Let $\catE$ be an exact category.
\begin{enua}
\item
A \emph{length-like function} is an additive function $\ell \colon \abs{\catE} \to \bbN$
such that $\ell(X)=0$ implies $X\iso 0$.
If $\catE$ has a length-like function,
then $\catE$ is a length exact category (see \cite[Lemma 4.3]{En22}).
\item
Let $\LL$ be a finite dimensional algebra over a field $\bbk$.
Then $\catmod \LL$ is a length abelian category
since the dimension as vector spaces gives rise to 
a length-like function $\dim_{\bbk} \colon \abs{\catmod \LL} \to \bbN$.
An extension-closed subcategory of $\catmod \LL$ is also a length exact category.
\item
A length abelian category satisfies the Jordan-H\"{o}lder property
(see \cite[p.92, Examples 2]{St}).
\end{enua}
\end{ex}

The following facts are basics to study the Grothendieck monoid of a length exact category.
\begin{fct}[{\cite[Proposition 4.8]{En22}}]\label{fct:length atomic}
Let $\catE$ be a length exact category.
Then $\M(\catE)$ is generated by the set $\{[S] \mid S\in \simp \catE\}$.
Moreover,
$\M(\catE)$ is finitely generated if and only if $\simp \catE$ is a finite set.
\end{fct}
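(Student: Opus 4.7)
The first assertion is proved directly from the hypothesis. Since $\catE$ is length, every $X\in\catE$ admits a composition series $0=X_0\le X_1\le\cdots\le X_n=X$ (as noted after Definition \ref{dfn:length}), and the additivity of the map $X\mapsto[X]$ yields $[X]=\sum_{i=1}^n[X_i/X_{i-1}]$, an $\bbN$-linear combination of classes of simples. Because every element of $\M(\catE)$ is of the form $[X]$ for some $X\in\catE$ (Remark \ref{rmk:compare grp mon}(3)), we obtain $\M(\catE)=\gen{[S]\mid S\in\simp\catE}_{\bbN}$, which is the first assertion.

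For the second assertion, the implication ``$\simp\catE$ finite $\Rightarrow$ $\M(\catE)$ finitely generated'' is immediate from the first part. For the converse, suppose $\M(\catE)=\gen{[X_1],\ldots,[X_m]}_{\bbN}$. Expanding each $X_j$ via a composition series produces a finite set $\{S_1,\ldots,S_n\}\subseteq\simp\catE$ whose classes already generate $\M(\catE)$. It then suffices to show that every simple $S\in\simp\catE$ is isomorphic to some $S_i$. Writing $[S]=\sum_{i=1}^n a_i[S_i]=[\bigoplus_i S_i^{\oplus a_i}]$, this reduces to the key claim: \emph{if $S$ is simple and $[S]=[X]$ in $\M(\catE)$, then $S\iso X$}. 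Granted this, $X:=\bigoplus_i S_i^{\oplus a_i}\iso S$ forces $\sum a_i=1$ (otherwise $X$ would admit a proper admissible subobject series of length $\ge 2$, which $S$ does not), so exactly one $a_i$ equals $1$ and the rest vanish, yielding $S\iso S_i$.

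The proof of the key claim is where the main obstacle lies. By Fact \ref{fct:=} there exists a chain $S=Y_0\sim_S Y_1\sim_S\cdots\sim_S Y_r=X$ of S-equivalences, and the plan is to show by induction on $i$ that $Y_i\iso S$. The inductive step is the heart of the matter: given $Y_i\iso S$ simple, fix admissible subobject series of $Y_i$ and $Y_{i+1}$ witnessing $Y_i\sim_S Y_{i+1}$; since the only admissible subobjects of the simple $Y_i$ are $0$ and $Y_i$ itself, its series has exactly one nonzero factor (isomorphic to $S$), and via the prescribed permutation of factors the same holds for $Y_{i+1}$. Using that an inflation with vanishing cokernel is an isomorphism, the series of $Y_{i+1}$ collapses to $0\le Y_{i+1}$ with unique nonzero factor $Y_{i+1}$ itself, whence $Y_{i+1}\iso S$. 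This completes the induction and, together with the length-bound argument above, establishes both assertions.
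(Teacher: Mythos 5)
Your proof is correct. Note that the paper itself offers no argument for this statement: it is quoted as a Fact with a citation to \cite[Proposition 4.8]{En22}, so there is no in-paper proof to compare against. What you have written is a sound, self-contained reconstruction along the same lines as the cited source. The first assertion is immediate from the existence of composition series in a length exact category together with the observation that every element of $\M(\catE)$ equals $[X]$ for a single object $X$. The substance lies in your key claim that $[S]=[X]$ with $S$ simple forces $S\iso X$ (this is, in effect, Enomoto's identification of the atoms of $\M(\catE)$ with the classes of simple objects), and your induction along a chain of S-equivalences handles it correctly: a simple object has only $0$ and itself as admissible subobjects, so any admissible subobject series of it has exactly one nonzero factor, the permutation transports this to the partner series, and collapsing the zero factors via the fact that an inflation with vanishing cokernel is an isomorphism gives $Y_{i+1}\iso S$. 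The final counting step (that $S\iso\bigoplus_i S_i^{\oplus a_i}$ forces $\sum_i a_i=1$, using $[S]\ne 0$ by Fact \ref{fct:sharp} for the case $\sum_i a_i=0$) is also fine. No gaps.
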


\begin{fct}[{\cite[Theorem 4.12]{En22}}]\label{fct:JHP}
The following are equivalent for an exact category $\catE$.
\begin{enua}
\item
$\catE$ satisfies the Jordan-H\"{o}lder property.
\item
$\M(\catE)$ is a free monoid with a basis $\{[S] \mid S\in \simp \catE\}$.
\end{enua}
In particular,
if $\catA$ is a length abelian category,
then $\M(\catA)$ is a free monoid with a basis $\{[S] \mid S\in \simp \catA\}$.
\end{fct}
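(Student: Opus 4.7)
The plan is to use the universal property of the Grothendieck monoid together with Fact \ref{fct:length atomic}, which already guarantees that in a length exact category $\M(\catE)$ is generated by the classes of simple objects. The key observation is that the Jordan–H\"older property lets the assignment of composition-factor multiplicities define a well-defined additive function into a free monoid; conversely, freeness of $\M(\catE)$ on the simples reads off uniqueness of composition series from uniqueness of expressions in a free monoid.

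For the implication $(1)\Rightarrow(2)$, for each $S\in\simp\catE$ I would define $m_S\colon \abs{\catE}\to\bbN$ by letting $m_S(X)$ be the number of indices $i$ with $X_i/X_{i-1}\iso S$ in a composition series $0=X_0\le X_1\le\cdots\le X_n=X$; this is independent of the choice of series by the Jordan–H\"older property. Packaging them together yields a map $f\colon \abs{\catE}\to\bbN^{\oplus\simp\catE}$. To verify $f$ is an additive function in the sense of Definition \ref{dfn:Gro mon UMP}, given a conflation $0\to X\to Y\to Z\to 0$ I would lift a composition series $0=Z_0\le\cdots\le Z_m=Z$ to an admissible series $X=Y_0\le Y_1\le\cdots\le Y_m=Y$ by iterated pullbacks $Y_j:=Y\times_Z Z_j$ along the deflation $Y\defl Z$, producing conflations $0\to X\to Y_j\to Z_j\to 0$ with $Y_j/Y_{j-1}\iso Z_j/Z_{j-1}$; concatenating with a composition series of $X$ then gives a composition series of $Y$ whose factors are exactly those of $X$ together with those of $Z$, which is the desired additivity. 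The universal property then produces $\overline{f}\colon \M(\catE)\to\bbN^{\oplus\simp\catE}$ with $\overline{f}([S])=e_S$, while freeness of $\bbN^{\oplus\simp\catE}$ yields a converse $g\colon \bbN^{\oplus\simp\catE}\to\M(\catE)$ sending $e_S\mapsto[S]$; by Fact \ref{fct:length atomic} the two composites agree with the identities on generators, hence on the whole monoids.

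For $(2)\Rightarrow(1)$, suppose $\M(\catE)$ is free with basis $\{[S]\mid S\in\simp\catE\}$. Given two composition series $0=X_0\le\cdots\le X_n=X$ and $0=X_0'\le\cdots\le X_m'=X$ of the same object, taking classes in $\M(\catE)$ gives
\[
\sum_{i=1}^{n}[X_i/X_{i-1}] \;=\; [X] \;=\; \sum_{j=1}^{m}[X_j'/X_{j-1}'].
\]
Each summand is a basis element of the free monoid $\M(\catE)$, so unique expression in a free monoid forces $n=m$ together with a permutation $\sg\in\symg_n$ identifying the two multisets of composition factors up to isomorphism, which is exactly the Jordan–H\"older property.

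The main technical obstacle is the lifting step in $(1)\Rightarrow(2)$: one must check that the iterated pullbacks $Y_j=Y\times_Z Z_j$ genuinely fit into compatible conflations $0\to X\to Y_j\to Z_j\to 0$ and that the successive quotients $Y_j/Y_{j-1}$ are canonically isomorphic to $Z_j/Z_{j-1}$, which comes down to standard $3\times 3$-style manipulations in an exact category (pullbacks of deflations exist as deflations, and snake-type identifications hold). The ``in particular'' clause is then immediate, since Example \ref{ex:length ex}(3) records that every length abelian category satisfies the Jordan–H\"older property, and so $(1)\Rightarrow(2)$ applies.
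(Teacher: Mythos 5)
This statement is quoted in the paper as a Fact imported from \cite[Theorem 4.12]{En22}; the paper itself gives no proof, so there is nothing internal to compare against. Your argument is correct and is essentially the standard one: the multiplicity-of-composition-factors map $f\colon\abs{\catE}\to\bbN^{\oplus\simp\catE}$ is well defined by the Jordan--H\"older property, additive by the pullback lifting of a composition series of $Z$ along the deflation $Y\defl Z$ (the fact that the pullback of a deflation along an inflation is again an inflation with the expected cokernel is indeed standard in exact categories), and is inverse on generators to $e_S\mapsto[S]$, with Fact \ref{fct:length atomic} supplying generation; the converse direction reads off Jordan--H\"older from uniqueness of expressions in a free commutative monoid, using that the basis is indexed by isomorphism classes of simples so that $[S]=[T]$ forces $S\iso T$. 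The only caveat worth recording is that the statement, as in \cite[Theorem 4.12]{En22}, carries the implicit ambient hypothesis that $\catE$ is a \emph{length} exact category: the Jordan--H\"older property is only defined for length exact categories (Definition \ref{dfn:length}), and your construction of $m_S$ needs every object to admit a composition series, which is exactly what the length hypothesis guarantees. With that hypothesis made explicit, the proof is complete.
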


\begin{ex}\label{ex:M fd alg}
Let $\LL$ be a finite dimensional algebra over a field $\bbk$.
Then $\M(\catmod \LL)\iso \bbN^{\oplus n}$,
where $n$ is the number of isomorphism classes of simple $\LL$-modules.
The number of maximal right ideals of $\LL$ is also $n$.
Thus if $\LL$ is local, we have $\M(\catmod \LL)\iso\bbN$.
\end{ex}

We will now begin to classify Serre subcategories of a length exact category.
For a subcategory $\catX$ of an exact category $\catE$,
the \emph{Serre subcategory generated by $\catX$} is 
the smallest Serre subcategory $\gen{\catX}_{\Serre}$ containing $\catX$.
A Serre subcategory of the form $\gen{X}_{\Serre}$ is said to be \emph{finitely generated},
where $X\in\catE$.
\begin{prp}\label{prp:sim gen Serre}
Let $\catE$ be a length exact category.
Then we have an inclusion-preserving surjection
\[
\gen{-}_{\Serre} \colon \pow(\simp \catE) \to \Serre(\catE),\quad 
\catX \mapsto \gen{\catX}_{\Serre}.
\]
\end{prp}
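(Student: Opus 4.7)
The plan is to reduce the surjectivity statement to an analogous statement about faces of the Grothendieck monoid, which has already been handled abstractly. Inclusion-preservation is immediate from the definition of $\gen{-}_{\Serre}$ as the smallest Serre subcategory containing the given subset, so the whole content is surjectivity.

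I would transport the problem into $\M(\catE)$ via the inclusion-preserving bijection $\Serre(\catE) \iso \Face(\M(\catE))$ of Corollary \ref{cor:Serre bij}. Because this bijection is order-preserving in both directions, the smallest Serre subcategory containing $\catX$ corresponds to the smallest face containing $\M_{\catX} = \{[S] \mid S \in \catX\}$, i.e., to $\gen{\{[S] \mid S \in \catX\}}_{\face}$. Thus the composite
\[
\pow(\simp \catE) \xr{\catX \mapsto \{[S] \mid S\in \catX\}} \pow(\{[S] \mid S\in\simp\catE\}) \xr{\gen{-}_{\face}} \Face(\M(\catE))
\]
corresponds, under the Serre--face bijection, to the map $\gen{-}_{\Serre}$ that we wish to show is surjective. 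The first arrow is surjective by construction (every element of the codomain is of the form $[S]$ for some $S \in \simp\catE$, which we can lift to $\simp\catE$). The second arrow is surjective by Lemma \ref{lem:gen face} applied to the generating set $\{[S] \mid S\in\simp\catE\}$ of $\M(\catE)$, whose existence is guaranteed by Fact \ref{fct:length atomic} (this is the only place the length hypothesis enters). Composing yields the required surjectivity.

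I do not anticipate a real obstacle: the only thing to verify carefully is the transport of $\gen{-}_{\Serre}$ through Corollary \ref{cor:Serre bij} to $\gen{-}_{\face}$, and this is a formal consequence of both directions of the bijection being inclusion-preserving together with the universal characterisation of ``smallest containing.'' As a sanity check one can also give a direct argument: for $\catS \in \Serre(\catE)$ take $\catX := \catS \cap \simp\catE$; the inclusion $\gen{\catX}_{\Serre}\subseteq\catS$ is automatic, and for the converse, each $X\in\catS$ admits a composition series $0 = X_0 \le X_1 \le \cdots \le X_n = X$ since $\catE$ is length, each factor $X_i/X_{i-1}$ is simple and lies in $\catS$ by the Serre property (hence in $\catX$), and an induction along the series using the conflations $0 \to X_{i-1} \to X_i \to X_i/X_{i-1} \to 0$ places $X$ in $\gen{\catX}_{\Serre}$.
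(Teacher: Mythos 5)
Your proposal is correct and follows essentially the same route as the paper, whose proof is precisely the combination of Corollary \ref{cor:Serre bij}, Lemma \ref{lem:gen face} and Fact \ref{fct:length atomic} that you describe. The direct composition-series argument you add at the end is a valid alternative check but is not needed.
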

\begin{proof}
It follows from
Corollary \ref{cor:Serre bij}, Lemma \ref{lem:gen face} and Fact \ref{fct:length atomic}.
\end{proof}

As a corollary,
we obtain a classification of Serre subcategories 
of an exact category satisfying the Jordan-H\"{o}lder property.
\begin{cor}\label{cor:Serre JHP}
Let $\catE$ be an exact category satisfying the Jordan-H\"{o}lder property.
Then we have an inclusion-preserving bijection
\[
\gen{-}_{\Serre} \colon \pow(\simp \catE) \to \Serre(\catE),\quad 
\catX \mapsto \gen{\catX}_{\Serre}.
\]
\end{cor}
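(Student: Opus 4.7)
The plan is to combine Proposition \ref{prp:sim gen Serre} with the Jordan-H\"older hypothesis by transporting everything through the bijection $\Serre(\catE) \iso \Face(\M(\catE))$ of Corollary \ref{cor:Serre bij}. Proposition \ref{prp:sim gen Serre} already gives surjectivity of $\gen{-}_{\Serre}$ for any length exact category, so only injectivity needs new input, and that input is the much rigidity of the free monoid provided by Fact \ref{fct:JHP}.

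Concretely, I would first note that under the Jordan-H\"older hypothesis Fact \ref{fct:JHP} identifies $\M(\catE)$ with the free monoid on the basis $\{[S] \mid S \in \simp \catE\}$. Then Example \ref{ex:face free monoid} gives an inclusion-preserving bijection
\[
\gen{-}_{\face} \colon \pow(\{[S] \mid S \in \simp\catE\}) \isoto \Face(\M(\catE)),
\]
and via the identification $\simp\catE \leftrightarrow \{[S] \mid S \in \simp \catE\}$ (injective because distinct simples have distinct classes by Fact \ref{fct:sharp}(1) together with Fact \ref{fct:JHP}, since they are part of a basis) we get a bijection $\pow(\simp \catE) \isoto \Face(\M(\catE))$. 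Composing with the inverse of the bijection $\Serre(\catE) \isoto \Face(\M(\catE))$ from Corollary \ref{cor:Serre bij}, we obtain an inclusion-preserving bijection $\pow(\simp \catE) \isoto \Serre(\catE)$.

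The remaining step, and the one requiring a small amount of care, is to verify that this composite bijection agrees with the map $\gen{-}_{\Serre}$ in the statement. For a subset $\catX \subseteq \simp\catE$, under the correspondence of Corollary \ref{cor:Serre bij} the Serre subcategory $\gen{\catX}_{\Serre}$ corresponds to the face $\M_{\gen{\catX}_{\Serre}}$, and I need to check this face equals $\gen{\{[S] \mid S \in \catX\}}_{\face}$. The inclusion $\supseteq$ is immediate since every $[S]$ with $S \in \catX$ lies in $\M_{\gen{\catX}_{\Serre}}$, which is a face by Lemma \ref{lem:sub M}(3). For $\subseteq$, one uses that the subcategory $\catD_{\gen{\{[S] \mid S\in\catX\}}_{\face}}$ is a Serre subcategory (Lemma \ref{lem:sub M}(4)) containing $\catX$, hence contains $\gen{\catX}_{\Serre}$ by minimality, which translates back to the desired face containment.

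The only mild obstacle is this last compatibility check, but it is essentially formal given the machinery already assembled: the bijection of Corollary \ref{cor:Serre bij} is inclusion-preserving and sends ``generated by'' on one side to ``generated by'' on the other, so once one observes that the classes $[S]$ of simples form a basis of $\M(\catE)$, injectivity of $\gen{-}_{\Serre}$ on $\pow(\simp \catE)$ is forced by the injectivity of $\gen{-}_{\face}$ on $\pow(\{[S] \mid S \in \simp \catE\})$ from Example \ref{ex:face free monoid}.
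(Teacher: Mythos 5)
Your proof is correct and follows essentially the same route as the paper, which likewise deduces the corollary from Example \ref{ex:face free monoid}, Fact \ref{fct:JHP} and Proposition \ref{prp:sim gen Serre}; you have simply made explicit the compatibility check (that $\M_{\gen{\catX}_{\Serre}} = \gen{\{[S]\mid S\in\catX\}}_{\face}$) which the paper leaves implicit.
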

\begin{proof}
It follows from
Example \ref{ex:face free monoid}, Fact \ref{fct:JHP} and Proposition \ref{prp:sim gen Serre}.
\end{proof}

We give a nontrivial example of classifying Serre subcategories of a length exact category
which does not satisfy the Jordan-H\"{o}lder property.
We first introduce the Cayley quiver, 
which is a monoid version of the Cayley graph of a group.
\begin{dfn}[{\cite[Definition 7.5]{En22}}]
Let $M$ be a monoid generated by $A \subseteq M$.
Then the \emph{Cayley quiver} of $M$ with respect to $A$ is a quiver defined as follows:
\begin{itemize}
\item
The vertex set is $M$.
\item
For each $a\in A$ and $m\in M$,
we draw a (labeled) arrow $m\xr{a} m+a$.
\end{itemize}
\end{dfn}
For a length exact category $\catE$,
the natural choice of $A$ above is $\{[S] \mid S\in\simp\catE\}$. 

\begin{ex}[{cf. \cite[Section 7.2]{En22}}]\label{ex:explicit fd}
Let $\LL$ be the path algebra of the quiver $1 \leftarrow 2$ over a field $\bbk$.
Then $\catmod \LL$ is a length abelian category whose indecomposable objects
are exactly two simple modules $S_1, S_2$ and one projective injective module $P$.
Thus $\M(\catmod\LL) = \bbN[S_1]\oplus \bbN[S_2] \iso \bbN^{\oplus 2}$ by Fact \ref{fct:JHP}.
We identify $\M(\catmod\LL)$ with $\bbN^{\oplus 2}$ via this isomorphism.
Set $N:=\bbN (m,n) \subseteq \M(\catmod \LL)$ for $(0,0) \ne (m,n)\in \bbN^{\oplus 2}$.
Consider the extension-closed subcategory $\catD_{N}$ of $\catmod\LL$
corresponding to $N$ (see \S \ref{ss:Serre face}).
Then $\catD_{N}$ is a length exact category by Example \ref{ex:length ex}.
The structure of $\M(\catD_{N})$ is determined by Enomoto \cite[Proposition 7.6]{En22} as follows:
\begin{enua}
\item
$\catD_{N}$ has exactly $l+1$ distinct simple objects $A_0, \dots, A_l$,
where $l:=\min\{m,n\}$ and
\[
A_i:=P^{\oplus i} \oplus S_1^{\oplus (m-i)} \oplus S_2^{\oplus (n-i)}.
\]
Thus $\M(\catD_{N})$ is generated by $[A_0],\dots,[A_l]$.
\item
Set $a_i:=[A_i]$ for $0\le i \le l$.
Then the Cayley quiver of $\M(\catD_{N})$ with respect to $\{a_i\mid 0\le i \le l\}$
is determined as follows,
where $\xr{a_{0 \sim k}}$ denotes $k+1$ arrows $a_0, \dots, a_k$ for $0\le k \le l$.
\begin{enumerate}[label=\textbf{(Case \arabic*)},leftmargin=*]
\item The case $m\ne n$:
\[
\begin{tikzpicture}[auto]
\node (0) at (0,0) {$0$};
\node[above right =of 0] (a0) {$a_0$};
\node[below ={0.2cm of a0}] (a1) {$a_1$};
\node[below ={0.1cm of a1}] (vdot) {$\vdots$};
\node[below ={0.1cm of vdot}] (al) {$a_l$};
\node[below right =of a0] (2a0) {$2a_0$};
\node[right =of 2a0] (3a0) {$3a_0$};
\node[right =of 3a0] (cdot) {$\cdots$.};

\draw[->] (0) to node[yshift=-3pt] {$a_0$} (a0);
\draw[->] (0) to node[xshift=12pt, yshift=-12pt] {$a_1$} (a1);
\draw[->] (0) to node[xshift=-8pt, yshift=-13pt] {$a_l$} (al);

\draw[->] (a0) to node[xshift=-2pt, yshift=-3pt] {$a_{0 \sim l}$} (2a0);
\draw[->] (a1) to node[xshift=-18pt, yshift=-12pt] {$a_{0 \sim l}$} (2a0);
\draw[->] (al) to node[xshift=18pt, yshift=-13pt] {$a_{0 \sim l}$} (2a0);

\draw[->] (2a0) to node[xshift=-1pt] {$a_{0 \sim l}$} (3a0);
\draw[->] (3a0) to node[xshift=-1pt] {$a_{0 \sim l}$} (cdot);
\end{tikzpicture}
\]
In particular, $\M(\catE)$ is free if and only if either $m=0$ or $n=0$.  
\item The case $m=n$:
\[
\begin{tikzpicture}[auto]
\node (0) at (0,0) {$0$};

\node[above right =of 0] (a0) {$a_0$};
\node[below ={0.1cm of a0}] (a1) {$a_1$};
\node[below ={0.1cm of a1}] (vdot) {$\vdots$};
\node[below ={0.1cm of vdot}] (al) {$a_n$};

\node[right =of a0] (2a0) {$2a_0$};
\node[right =of 2a0] (3a0) {$3a_0$};
\node[right =of 3a0] (cdot) {$\cdots$};

\node[right =of al] (2al) {$2a_n$};
\node[right =of 2al] (3al) {$3a_n$};
\node[right =of 3al] (cdot2) {$\cdots$.};

\draw[->] (0) to node[yshift=-3pt] {$a_0$} (a0);
\draw[->] (0) to node[xshift=12pt, yshift=-12pt] {$a_1$} (a1);
\draw[->] (0) to node[xshift=-8pt, yshift=-13pt] {$a_n$} (al);

\draw[->] (a0) to node[xshift=-2pt] {$a_{0 \sim n}$} (2a0);
\draw[->] (a1) to node[xshift=11pt, yshift=-15pt] {$a_{0 \sim n}$} (2a0);
\draw[->] (al) to node[xshift=35pt, yshift=-13pt] {$a_{0 \sim n-1}$} (2a0);

\draw[->] (2a0) to node[xshift=-1pt] {$a_{0 \sim n}$} (3a0);
\draw[->] (3a0) to node[xshift=-1pt] {$a_{0 \sim n}$} (cdot);

\draw[->] (al) to node {$a_n$} (2al);
\draw[->] (2al) to node {$a_n$} (3al);
\draw[->] (3al) to node {$a_n$} (cdot2);

\draw[->] (2al) to node[xshift=35pt, yshift=-13pt] {$a_{0 \sim n-1}$} (3a0);
\draw[->] (3al) to node[xshift=35pt, yshift=-13pt] {$a_{0 \sim n-1}$} (cdot);
\end{tikzpicture}
\]
\end{enumerate}
\end{enua}
Now we determine the faces of $\M(\catD_N)$ to classify the Serre subcategories of $\catD_N$:
\begin{description}
\item[(Case 1)]
Any face $F$ of $\M(\catD_N)$ is of the form $\gen{a_i \mid i\in I}_{\face}$ 
for some $I \subseteq \{0,\dots,l\}$ by Lemma \ref{lem:gen face}.
If $I$ is not empty, then $F$ contains $2a_0$.
Thus all $a_i$ belong to $F$ since it is a face, and then $F=\M(\catD_N)$. 
Therefore $\catD_N$ has no nontrivial Serre subcategories.
\item[(Case 2)]
Let $F=\gen{a_i \mid i\in I}_{\face}$ be a face of $\M(\catD_N)$ 
for some $I \subseteq \{0,\dots,n\}$.
If $i\in I$ for $0\le i \le n-1$, then $2a_0 \in F$, and thus $F=\M(\catD_N)$.
Unlike the case $m\ne n$, $\M(\catD_N)$ has a nontrivial face $F=\gen{a_n}_{\face}$.
Hence $\catD_N$ has exactly three Serre subcategories
$0$, $\catD_N$ and $\gen{P^{\oplus n}}_{\Serre}$.
\end{description}
\end{ex}
%
%

\section{The case of exact categories related to a smooth projective curve}\label{s:ex curve}
Hereafter $C$ is a smooth projective curve over an algebraically closed field $\bbk$.
There are three exact categories related to $C$:
\begin{itemize}
\item
The category $\coh(C)$ of coherent sheaves on $C$.
\item
The category $\vect(C)$ of vector bundles over $C$.
\item 
The category $\tor(C)$ of coherent torsion sheaves on $C$.
\end{itemize}
We determine the Grothendieck monoids of them and classify Serre subcategories of them.

For the basics of algebraic geometry, we refer to \cite{Ha,GW}.
We fix notation on schemes.
Let $X$ be a noetherian scheme with structure sheaf $\shO_X$.
A point of $X$ is not necessarily assumed to be closed.
For a point $x\in X$,
we denote by $\mm_x$ the maximal ideal of $\shO_{X,x}$ 
and $\kappa(x):=\shO_{X,x}/\mm_x$ the residue field of $x$.
We denote by $\coh X$ the category of coherent sheaves on $X$.
Let $\shF$ and $\shG$ be coherent sheaves on $X$.
We set $\Hom_{\shO_X}(\shF,\shG):=\Hom_{\coh X}(\shF,\shG)$.
The tensor product of $\shF$ and $\shG$ over $\shO_X$ is denoted by $\shF \otimes_{\shO_X} \shG$.
The sheaf of homomorphisms from $\shF$ to $\shG$ is denoted by $\shHom_{\shO_X}(\shF,\shG)$.
If no ambiguity can arise, we will often omit the subscript $\shO_X$.
The support of $\shF$ is a closed subset of $X$
defined by $\Supp \shF :=\{x\in X \mid \shF_x \ne 0 \}$.
For a noetherian commutative ring $R$,
we identify $\catmod R$ with $\coh(\Spec R)$.
For a morphism $f\colon X \to Y$ of noetherian schemes,
we denote by $f_*\shF$ the direct image of $\shF\in \coh X$
and $f^*\shG$ the pull-back of $\shG \in \coh Y$.
We always have a functor $f^*\colon \coh Y \to \coh X$,
while we have a functor $f_*\colon \coh X \to \coh Y$ when $f$ is proper
(e.g., $f$ is a closed immersion).

We will review the categorical properties of $\coh C$ in each of the following subsections.
Those are well-known and easy to prove
but we write down the proofs since we do not find a suitable reference.
The results of this section are new, except for the lemmas.

\subsection{The case of coherent torsion sheaves}\label{ss:M tor}
We first review a categorical characterization of coherent torsion sheaves on a curve.
Let $i:Z \inj X$ be a closed immersion into a noetherian scheme $X$
and let $\shI$ be the quasi-coherent ideal sheaf corresponding to $Z$.
Then the functor $i_*:\coh Z \to \coh X$ is a fully faithful exact functor
whose essential image $\Ima i_*$ is the subcategory consisting of
coherent sheaves $\shF$ such that $\shI\shF=0$ 
(cf. \cite[\href{https://stacks.math.columbia.edu/tag/01QX}{Tag 01QX}]{SP}).
It follows immediately that $\Ima i_*$ is closed under subobjects in $\coh X$.
This means that there is no difference between
subobjects of $\shF \in \coh Z$ and subobjects of $i_*\shF \in \coh X$.
For a closed point $x\in X$,
consider the natural closed immersion $i \colon \Spec\kappa(x) \inj X$.
Then $\shO_x :=i_*\shO_{\Spec\kappa(x)}$ is a simple object of $\coh X$ by the above discussion.

\begin{lem}\label{lem:char simp sh}
The following are equivalent for a coherent sheaf $\shF$ on a noetherian scheme $X$:
\begin{enua}
\item
$\shF$ is a simple object in $\coh X$.
\item
$\shF\iso \shO_x$ for some closed point $x\in X$. 
\end{enua}
\end{lem}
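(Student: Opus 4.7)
The plan is to handle the two directions of the equivalence separately; all the substance lies in $(1)\Rightarrow(2)$, since $(2)\Rightarrow(1)$ follows directly from the paragraph preceding the statement. Explicitly, for the closed immersion $i\colon\Spec\kappa(x)\inj X$ the functor $i_*$ is fully faithful with essential image closed under subobjects of $\coh X$, so subobjects of $\shO_x = i_*\kappa(x)$ correspond to subobjects of $\kappa(x)$ in $\coh(\Spec\kappa(x)) = \catmod\kappa(x)$; since $\kappa(x)$ is a field, only $0$ and $\kappa(x)$ itself remain.

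For $(1)\Rightarrow(2)$, my strategy is to produce a nonzero morphism from a simple $\shF$ into $\shO_x$ for a suitable closed point $x$, and then to conclude by the simplicity of both source and target. First I would choose a closed point $x\in\Supp\shF$: because $\shF\ne 0$ its support is a nonempty closed subset of the noetherian scheme $X$, and every such subset contains a closed point of $X$. Next I would exploit Nakayama's lemma: the stalk $\shF_x$ is a nonzero finitely generated $\shO_{X,x}$-module, so $i^*\shF = \shF_x/\mm_x\shF_x$ (where $i\colon\Spec\kappa(x)\inj X$ is the closed immersion of $x$) is a nonzero finite-dimensional $\kappa(x)$-vector space. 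In particular it admits a nonzero $\kappa(x)$-linear functional to $\kappa(x)$, which under the adjunction $\Hom_{\shO_X}(\shF,i_*-)\cong\Hom_{\kappa(x)}(i^*\shF,-)$ corresponds to a nonzero morphism $\varphi\colon\shF\to\shO_x$.

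With $\varphi$ in hand the conclusion is essentially automatic: $\Ker\varphi$ is a proper subobject of the simple sheaf $\shF$ and hence zero, so $\varphi$ is injective; $\Ima\varphi$ is a nonzero subobject of the simple sheaf $\shO_x$ (its simplicity being already noted) and hence all of $\shO_x$, so $\varphi$ is surjective. Therefore $\varphi$ is an isomorphism, yielding $\shF\cong\shO_x$. I do not anticipate a real obstacle beyond checking that $\Supp\shF$ contains a closed point of $X$, which is standard for noetherian schemes.
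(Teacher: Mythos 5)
Your proposal is correct and follows essentially the same route as the paper: both proofs pick a closed point $x$ of the nonempty closed set $\Supp\shF$, use Nakayama's lemma to see that $i^*\shF=\shF_x/\mm_x\shF_x$ is nonzero, and then exploit simplicity to identify $\shF$ with $\shO_x$. The only cosmetic difference is in the last step: the paper uses the surjective unit $\shF\to i_*i^*\shF$ and the simplicity of $\shF$ to get $\shF\iso i_*i^*\shF$ with $i^*\shF$ simple over $\kappa(x)$, whereas you compose with a nonzero functional and run a Schur-type argument on the nonzero map $\shF\to\shO_x$ between simple objects; both are equally valid.
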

\begin{proof}
We have already proved that (2) implies (1).
Hence we only prove that (1) implies (2).
Suppose that $\shF$ is a simple object in $\coh X$.
Recall that a simple object is nonzero,
so we have $\Supp \shF \ne \emptyset$.
There is a closed point $x$ of $\Supp \shF$ 
because $X$ is noetherian (cf. \cite[Lemma1.25, Exercise 3.13]{GW}).
Let $i \colon \Spec \kappa(x) \inj X$ be the natural closed immersion.
Then $\shF(x):=i^*\shF=\shF_{x}/\shF_{x}\mm_x \in \catmod \kappa(x)$ 
is nonzero by Nakayama's lemma.
Because the unit morphism $\shF \to i_*i^*\shF =i_*\shF(x)$ is surjective and $\shF$ is simple,
we have that $\shF \isoto i_*\shF(x)$.
Then $\shF(x)$ is also a simple object in $\catmod \kappa(x)$.
This means $\shF(x)\iso \kappa(x)$,
and we obtain the desired result.
\end{proof}

\begin{lem}\label{lem:char fl}
The following are equivalent for a coherent sheaf $\shF$ on a noetherian scheme $X$:
\begin{enua}
\item
$\shF$ is of finite length in $\coh X$ (see Definition \ref{dfn:length}).
\item
$\Supp(\shF)$ consists of only finitely many closed points.
\end{enua}
In this case, the following hold:
\begin{enur}
\item
$\shF_x$ is an $\shO_{X,x}$-module of finite length for any $x\in X$.
\item
The natural morphism $\shF \to \bigoplus_{x\in\Supp \shF} {i_x}_*\shF_x$
is an isomorphism,
where $i_x$ is the natural morphism $\Spec \shO_{X,x} \to X$.
\end{enur}
\end{lem}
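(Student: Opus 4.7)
The plan is to prove the equivalence by handling each direction with the help of Lemma \ref{lem:char simp sh}, and then to derive the decomposition (ii) from which (i) and the length condition follow.

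For the direction (1) $\imply$ (2), I would start from a composition series $0 = \shF_0 \le \shF_1 \le \cdots \le \shF_n = \shF$. Each composition factor $\shF_i/\shF_{i-1}$ is simple, hence of the form $\shO_{x_i}$ for some closed point $x_i \in X$ by Lemma \ref{lem:char simp sh}. Since support is sub-additive on short exact sequences, $\Supp \shF \subseteq \bigcup_i \Supp \shO_{x_i} = \{x_1,\dots,x_n\}$, which is a finite set of closed points. (The inclusion in the other direction is not needed here, but in fact it is an equality since support is preserved under extensions.)

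For the direction (2) $\imply$ (1), assume $\Supp \shF = \{x_1,\dots,x_n\}$ consists of finitely many closed points. The first step is to prove the decomposition (ii), which simultaneously yields (i) and (1). I would consider the natural morphism $\varphi \colon \shF \to \bigoplus_{i=1}^n (i_{x_i})_*\shF_{x_i}$ coming from the units $\shF \to (i_{x_i})_*(i_{x_i})^*\shF = (i_{x_i})_*\shF_{x_i}$. To check $\varphi$ is an isomorphism it suffices to compare stalks. At a point $y \notin \Supp \shF$ both sides vanish. At a point $x_j$, the summand $(i_{x_j})_*\shF_{x_j}$ has stalk $\shF_{x_j}$, while each other summand $(i_{x_i})_*\shF_{x_i}$ with $i \ne j$ has zero stalk at $x_j$ (since $x_i$ is a closed point different from $x_j$ and $\{x_i\}$ is closed); the map is the identity on the surviving factor.

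Given this decomposition, (i) follows because $\shF_{x_i}$ is a finitely generated $\shO_{X,x_i}$-module whose support (as a module) is $\{\mm_{x_i}\}$, and such a module has finite length by standard commutative algebra. Finally, each summand $(i_{x_i})_*\shF_{x_i}$ has finite length in $\coh X$: a composition series of $\shF_{x_i}$ over $\shO_{X,x_i}$ has successive quotients isomorphic to $\kappa(x_i)$, and pushing it forward along $i_{x_i}$ gives an admissible subobject series of $(i_{x_i})_*\shF_{x_i}$ with successive quotients $\shO_{x_i}$, which are simple by Lemma \ref{lem:char simp sh}. Taking the direct sum of these series yields a composition series of $\shF$, proving (1). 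The main delicate point, in my view, is verifying that the canonical map $\varphi$ is a stalk-wise isomorphism; everything else is essentially formal consequences of this decomposition.
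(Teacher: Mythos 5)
Your proof is correct, and for the implication $(2)\imply(1)$ it takes a genuinely different route from the paper. The paper first equips $Z:=\Supp\shF$ with the closed subscheme structure given by the annihilator $\Ann(\shF)$, invokes the structure result that $\coprod_{x\in Z}\Spec\shO_{Z,x}\to Z$ is an isomorphism with each $\shO_{Z,x}$ artinian local, and then obtains the decomposition (ii) from $\shF\isoto j_*j^*\shF$ together with a computation of $j^*\shF$; you instead write down the canonical map $\varphi\colon\shF\to\bigoplus_i (i_{x_i})_*\shF_{x_i}$ directly and verify stalk-wise that it is an isomorphism. Your route is more elementary in that it avoids the scheme-theoretic support and the cited decomposition of zero-dimensional noetherian schemes; the paper's route buys a cleaner conceptual reason for the splitting (the support scheme is literally a finite disjoint union of artinian local spectra) and packages the coherence of the summands for free, since each $(jj_x)_*$ is pushforward along a closed immersion. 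The direction $(1)\imply(2)$ and the final step (pushing forward composition series of the stalks to get a composition series of $\shF$ with factors $\shO_{x_i}$) are essentially identical in both arguments.

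One point you should make explicit: the vanishing of the stalk of $(i_{x_i})_*\shF_{x_i}$ at any point $y\ne x_i$ is \emph{not} a consequence of $x_i$ being closed alone --- for a general finitely generated $\shO_{X,x_i}$-module $M$, the sheaf $(i_{x_i})_*\widetilde{M}$ can have nonzero stalks at generalizations of $x_i$ (e.g.\ $(i_x)_*\shO_{X,x}$ on an integral scheme). What makes it work here is that hypothesis (2) forces $\shF_{x_i}$ to be supported only at the maximal ideal $\mm_{x_i}$ of $\Spec\shO_{X,x_i}$ (its support is the preimage of $\Supp\shF$, and no other $x_j$ generalizes $x_i$), so that $\widetilde{\shF_{x_i}}$ vanishes on the punctured spectrum and hence $(i_{x_i})_*\shF_{x_i}$ has zero stalk away from $x_i$. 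You do record exactly this support fact, but only afterwards, when deriving (i); it should be established \emph{before} the stalk computation, since the computation depends on it. With that reordering the argument is complete.
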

\begin{proof}
It is clear when $\shF=0$.
We assume that $\shF\ne0$, and hence $\Supp \shF \ne \emptyset$.

$(1)\imply (2)$:
There is a composition series $0 = \shF_0 \subseteq \shF_1 \subseteq \cdots \subseteq \shF_n=\shF$
in $\coh X$ since $\shF$ is of finite length.
Then $\shF_{i}/\shF_{i-1}\iso \shO_{x_i}$ for some closed point $x_i\in X$ 
by Lemma \ref{lem:char simp sh}.
Thus we have $\Supp \shF = \bigcup_{i=1}^{n} \Supp \shO_{x_i}=\{x_i \mid 1\le i \le n\}$.

$(2)\imply (1)$:
We regard $Z:=\Supp \shF$ as a closed subscheme of $X$
which corresponds to the annihilator $\Ann(\shF)$ of $\shF$ (cf. \cite[Subsection 7.17]{GW}).
Note that $\shO_{Z,x}\iso \shO_{X,x}/\Ann_{\shO_{X,x}}(\shF_x)$ as rings.
Then the natural morphism
$\coprod_{x\in \Supp \shF} \Spec\shO_{Z,x} \to Z$
is an isomorphism 
and $\shO_{Z,x}$ is an artinian local ring
by (2) and \cite[Proposition 5.11]{GW}.
Since $\shF_x$ is finitely generated over the artinian ring $\shO_{Z,x}$,
it is of finite length as an $\shO_{Z,x}$-module,
and hence (i) also holds.
Let $j\colon Z \inj X$ and $j_x\colon \Spec\shO_{Z,x} \to Z$
be the natural closed immersions.
Then we have
\[
j^*\shF 
\iso \bigoplus_{x\in\Supp\shF} {j_x}_*\left(j^*\shF \right)_x
\iso \bigoplus_{x\in\Supp\shF} {j_x}_*\left(\shF_x/\shF_x \cdot \Ann_{\shO_{X,x}}(\shF_x) \right)
=\bigoplus_{x\in\Supp\shF} {j_x}_*\shF_x.
\]
Thus we obtain isomorphisms
\begin{align}\label{eq:char fl}
\shF \isoto j_*j^*\shF \iso j_*\left( \bigoplus_{x\in\Supp\shF} {j_x}_*\shF_x \right) 
\iso \bigoplus_{x\in\Supp \shF} {(jj_x)}_*\shF_x.
\end{align}
See \cite[Remark 7.36]{GW} for the first isomorphism.
Since $jj_x$ is a closed immersion and $\shF_x$ is of finite length,
we conclude that $\shF$ is also of finite length in $\coh X$.
Then (ii) holds by \eqref{eq:char fl} and the following commutative diagram:
\[
\begin{tikzcd}
Z \arr{r,"j",hook} & X\\
\Spec\shO_{Z,x} \arr{u,"j_x",hook} \arr{r,hook} & \Spec\shO_{X,x} \arr{u,"i_x"'}.
\end{tikzcd}
\]
\end{proof}

Let us characterize coherent sheaves of finite length on a smooth projective curve $C$.
For any closed point $x\in C$, 
we set $\shO_{nx}:=i_*\left(\shO_{C,x}/\mm_x^n\right)$,
where $i$ is the natural morphism $\Spec \shO_{C,x} \to C$.
\begin{lem}\label{lem:char tor}
The following are equivalent for a coherent sheaf $\shF$ on $C$:
\begin{enumerate}
\item
$\shF$ is of finite length in $\coh C$.
\item
$\Supp(\shF)$ has only finitely many points.
\item
$\shF_{\eta}=0$ holds, where $\eta$ is the generic point of $C$. 
\end{enumerate}
In this case, the following hold:
\begin{enur}
\item
$\shF_x$ is a torsion $\shO_{C,x}$-module for any $x\in C$.
\item
$\shF \iso \bigoplus_{x\in\Supp \shF} \shO_{n_x x}$ for some positive integers $n_x > 0$.
\end{enur}
\end{lem}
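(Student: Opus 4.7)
The plan is to leverage Lemma \ref{lem:char fl} together with the one-dimensional nature of $C$. First I would record two basic geometric facts: since $C$ is integral of dimension one, the only non-closed point is the generic point $\eta$, whose closure is all of $C$; and since $C$ is smooth, $\shO_{C,x}$ is a discrete valuation ring at every closed point $x$ (while $\shO_{C,\eta}$ is the function field $K(C)$).

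For the equivalence $(2) \Leftrightarrow (3)$, the generic point $\eta$ lies in $\Supp\shF$ if and only if $\shF_\eta \ne 0$. If $\eta \in \Supp\shF$, then because $\Supp\shF$ is closed and $\overline{\{\eta\}}=C$, we obtain $\Supp\shF=C$, which has infinitely many points. Conversely, if $\Supp\shF$ is a finite set, then all its points must be closed (since $\eta$ would drag along the whole curve), so $\shF_\eta=0$. The equivalence $(1) \Leftrightarrow (2)$ then follows from Lemma \ref{lem:char fl}: finite length is equivalent to $\Supp\shF$ being a finite set of closed points, which by the previous paragraph is the same as $\Supp\shF$ being finite.

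For (i), at a closed point $x$ the ring $\shO_{C,x}$ is a DVR, and Lemma \ref{lem:char fl}(i) says $\shF_x$ is of finite length over this DVR, which forces $\shF_x$ to be torsion; at $x=\eta$ the claim is vacuous because $\shF_\eta=0$ by (3). For (ii), I would combine Lemma \ref{lem:char fl}(ii), which gives the isomorphism $\shF \iso \bigoplus_{x\in\Supp\shF}(i_x)_*\shF_x$, with the structure theorem for finitely generated torsion modules over a DVR: each $\shF_x$ decomposes as a direct sum of cyclic modules $\shO_{C,x}/\mm_x^{n}$, which push forward under $i_x$ to the sheaves $\shO_{nx}$.

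The only real subtlety I foresee is cosmetic: the displayed formula in (ii) lists one summand $\shO_{n_x x}$ per support point $x$, whereas the DVR structure theorem may produce several cyclic summands at each $x$; I would interpret the direct sum as ranging over the (multi-)set of indecomposable direct summands and rewrite it accordingly. Everything else reduces to routine application of the two prior lemmas and well-known facts about smooth curves.
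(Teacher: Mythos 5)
Your proof is correct and follows essentially the same route as the paper: the equivalence of (1)--(3) via Lemma \ref{lem:char fl} together with the observation that a non-empty closed subset of the one-dimensional integral scheme $C$ is finite exactly when it avoids $\eta$, and (i)--(ii) via the DVR structure at closed points. Your caveat about (ii) is apt --- the paper's proof likewise invokes only cyclic modules $\shO_{C,x}/\mm_x^{n_x}$ and glosses over the fact that a finite-length module over a DVR may be a direct sum of several cyclic summands, so the displayed formula should indeed be read as allowing several summands $\shO_{n x}$ per support point.
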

\begin{proof}
It is clear when $\shF=0$.
We assume that $\shF\ne0$, and hence $\Supp \shF \ne \emptyset$.
Since $C$ is a $1$-dimensional integral scheme of finite type over $\bbk$,
the following are equivalent for a non-empty closed subset $Z$ of $C$
(cf. \cite[Proposition 5.20]{GW}):
\begin{itemize}
\item 
$\dim Z = 0$.
\item
$Z$ has only finitely many points.
\item
$Z$ consists of finitely many closed points.
\item
$\eta \not\in Z$.
\end{itemize}
The equivalence of (1), (2) and (3) follows from Lemma \ref{lem:char fl} and the above.
For a finitely generated module over the discrete valuation ring $\shO_{C,x}$,
it is of finite length if and only if it is torsion.
Moreover, it is of the form $\shO_{C,x}/\mm_x^{n_x}$ for some integer $n_x\ge 0$.
Thus (i) and (ii) follow from Lemma \ref{lem:char fl}.
\end{proof}
A coherent sheaf $\shF$ on $C$ is said to be \emph{torsion}
if it satisfies the equivalent conditions of Lemma \ref{lem:char tor}.
We denote by $\tor C$ the category of coherent torsion sheaves.
It is immediate that $\tor C$ is a Serre subcategory of the abelian category $\coh C$.
It is also clear that $\tor C$ is a length abelian category.


We will calculate the Grothendieck monoid $\M(\tor C)$ 
and classify Serre subcategories of $\tor C$.
For this, we recall divisors on $C$.
Let $C(\bbk)$ be the set of closed points of $C$.
We denote by $\Div(C)$ the free abelian group generated by $C(\bbk)$.
An element $D=\sum_{i=1}^n m_i x_i$ of $\Div(C)$ is called a \emph{divisor} on $C$.
The integer $\deg D:=\sum_{i=1}^n m_i$ is called the \emph{degree} of $D$.
A divisor $D=\sum_{i=1}^n m_i x_i$ is said to be \emph{effective} if $m_i\ge 0$ for all $i$.
$\Div^+(C)$ denotes the set of effective divisors on $C$, which is a submonoid of $\Div(C)$.
\begin{prp}\label{prp:M tor}
The following hold.
\begin{enua}
\item
$\simp(\tor C)=\{ \shO_x \mid x \in C(\bbk) \}$ holds 
(see \S \ref{ss:length exact} for the notation).
\item
There is a monoid isomorphism
\[
\Div^+(C) \isoto \M(\tor C),\quad
\sum_{i=1}^n m_i x_i \mapsto  \sum_{i=1}^n m_i [\shO_{x_i}].
\]
\end{enua}
\end{prp}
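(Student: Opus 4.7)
The plan is to reduce the statement to the Jordan--H\"{o}lder structure of $\tor C$ via Fact \ref{fct:JHP}. The key observation is that $\tor C$ is actually a \emph{length abelian category}: by definition every $\shF \in \tor C$ satisfies the equivalent conditions of Lemma \ref{lem:char tor}, and in particular $\shF$ is of finite length as an object of $\coh C$. Since $\tor C$ is a Serre subcategory of the abelian category $\coh C$, the ambient abelian structure restricts to an abelian structure on $\tor C$, and admissible subobjects in $\tor C$ coincide with subobjects in $\coh C$ that happen to be torsion; thus $\shF$ is also of finite length inside $\tor C$. Example \ref{ex:length ex}(3) then gives the Jordan--H\"{o}lder property for free.

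For part (1), I would first show $\simp(\tor C) = \{\shO_x \mid x \in C(\bbk)\}$. Every $\shO_x$ lies in $\tor C$ since $\Supp \shO_x = \{x\}$ is a single closed point (Lemma \ref{lem:char tor}), and Lemma \ref{lem:char simp sh} tells us $\shO_x$ is simple already in $\coh C$, hence simple in the Serre subcategory $\tor C$. Conversely, a simple object $S$ of $\tor C$ is in particular a nonzero torsion sheaf whose only admissible (= coherent) subobjects are $0$ and $S$; this is precisely being a simple object of $\coh C$, so Lemma \ref{lem:char simp sh} again forces $S \iso \shO_x$ for some $x \in C(\bbk)$.

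For part (2), I would combine part (1) with Fact \ref{fct:JHP}: since $\tor C$ is a length abelian category satisfying the Jordan--H\"{o}lder property, its Grothendieck monoid is the free monoid on the basis $\{[\shO_x] \mid x \in C(\bbk)\}$. On the other hand, $\Div^+(C)$ is by construction the free commutative monoid on the set $C(\bbk)$ of closed points. The assignment $x \mapsto [\shO_x]$ therefore extends uniquely to a monoid isomorphism $\Div^+(C) \isoto \M(\tor C)$, which is exactly the map in the statement.

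There is essentially no hard step here: the entire argument is bookkeeping on top of the general length--category machinery and the already established classification of simples. The only mild technical point worth being explicit about is the passage between ``simple in $\tor C$'' and ``simple in $\coh C$,'' which relies on $\tor C$ being closed under subobjects in $\coh C$; this is immediate from Lemma \ref{lem:char tor}, since any coherent subsheaf of a torsion sheaf is again torsion.
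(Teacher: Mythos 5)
Your proposal is correct and follows exactly the paper's route: the paper's proof is the one-line ``It follows from Lemma \ref{lem:char simp sh} and Fact \ref{fct:JHP},'' relying on the observations (stated just before the proposition) that $\tor C$ is a Serre subcategory of $\coh C$ and a length abelian category. You have simply written out the bookkeeping — closure under subobjects to transfer simplicity, and freeness of the Grothendieck monoid on the simples — that the paper leaves implicit.
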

\begin{proof}
It follows from Lemma \ref{lem:char simp sh} and Fact \ref{fct:JHP}.
\end{proof}

\begin{cor}\label{cor:Serre tor}
There is an inclusion-preserving bijection
\[
\pow(C(\bbk)) \isoto \Serre(\tor C),\quad
A \mapsto \gen{\shO_x \mid x\in A}_{\Serre}.
\]
\end{cor}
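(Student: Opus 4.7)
The plan is to reduce this statement immediately to Corollary \ref{cor:Serre JHP} via the bijection $x \mapsto \shO_x$ between $C(\bbk)$ and $\simp(\tor C)$. I would proceed in three short steps.

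First, I would verify that $\tor C$ satisfies the Jordan--H\"older property, so that Corollary \ref{cor:Serre JHP} applies. The category $\tor C$ is a Serre subcategory of $\coh C$, as noted just before Proposition \ref{prp:M tor}, so it is an abelian category. Moreover, by Lemma \ref{lem:char tor}, every coherent torsion sheaf is of finite length in $\coh C$, and hence also of finite length in $\tor C$ (since admissible subobjects in a Serre subcategory of an abelian category are the same as subobjects in the ambient category). Thus $\tor C$ is a length abelian category, and Example \ref{ex:length ex}(3) guarantees it satisfies the Jordan--H\"older property.

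Second, I would invoke Corollary \ref{cor:Serre JHP} to obtain the inclusion-preserving bijection
\[
\gen{-}_{\Serre} \colon \pow(\simp(\tor C)) \isoto \Serre(\tor C).
\]
Third, Proposition \ref{prp:M tor}(1) identifies $\simp(\tor C)$ with $\{\shO_x \mid x \in C(\bbk)\}$, and the assignment $x \mapsto \shO_x$ is a bijection $C(\bbk) \isoto \simp(\tor C)$ (distinct closed points give coherent sheaves with distinct supports, hence distinct). Taking power sets gives an inclusion-preserving bijection $\pow(C(\bbk)) \isoto \pow(\simp(\tor C))$, and composing with the bijection from the previous step yields the stated bijection.

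There is really no main obstacle here: once $\tor C$ is identified as a length abelian category, everything follows formally from Corollary \ref{cor:Serre JHP} together with the description of $\simp(\tor C)$ from Proposition \ref{prp:M tor}. The only small point of care is to observe that being of finite length in $\coh C$ implies being of finite length in $\tor C$, which is automatic from the Serre-subcategory property.
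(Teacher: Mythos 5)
Your proposal is correct and follows essentially the same route as the paper, which proves the corollary by combining Example \ref{ex:length ex}~(3) (so that $\tor C$, being a length abelian category, satisfies the Jordan--H\"older property), Corollary \ref{cor:Serre JHP}, and the identification of $\simp(\tor C)$ with $\{\shO_x \mid x\in C(\bbk)\}$ from Proposition \ref{prp:M tor}. Your extra remarks (finite length passing from $\coh C$ to the Serre subcategory $\tor C$, and injectivity of $x\mapsto \shO_x$) are just the details the paper leaves implicit.
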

\begin{proof}
It follows from
Example \ref{ex:length ex} (3), Corollary \ref{cor:Serre JHP} and Proposition \ref{prp:M tor}.
\end{proof}
Note that $\pow(C(\bbk))$ is exactly the set of specialization-closed subsets except $C$ itself
(see Definition \ref{dfn:spcl order}).
\subsection{The case of vector bundles}\label{ss:M vect}
We begin with a review of vector bundles on a noetherian scheme $X$.
A \emph{locally free sheaf} of rank $n$ on $X$
is a coherent sheaf $\shF$ such that $\shF_x \iso \shO_{X,x}^{\oplus n}$ for all $x\in X$
(cf. \cite[Proposition 7.41]{GW}).
We call a locally free sheaf of finite rank on $X$ a \emph{vector bundle} over $X$.
We denote by $\vect X$ the category of vector bundled over $X$.
Then $\vect X$ is an extension-closed subcategory of $\coh X$.
Indeed, for any exact sequence $0\to \shF \to \shG \to \shH \to 0$ in $\coh X$
with $\shF,\shH \in \vect X$ and any $x\in X$,
the exact sequence $0\to \shF_x \to \shG_x \to \shH_x \to 0$ splits 
since $\shH_x$ is a free $\shO_{X,x}$-module.
Thus $\shG_x \iso \shF_x \oplus \shH_x$ is also a free $\shO_{X,x}$-module for any $x\in X$.
This implies $\shG \in \vect X$, and hence $\vect X$ is extension-closed.
Then $\vect X$ is a length exact category
because the ranks of vector bundles give rise to
a length-like function $\rk \colon \abs{\vect X} \to \bbN$.
An admissible subobject in $\vect X$ is called a \emph{subbundle}.

Before studying the Grothendieck monoid $\M(\vect C)$,
we recall the structure of the Grothendieck group $\K_0(\vect C)$.
For this, we will introduce the Picard group of a noetherian scheme $X$.
A \emph{line bundle} $\shL$ is a vector bundle of rank $1$.
It gives rise to an exact equivalence $-\otimes \shL \colon \coh X \simto \coh X$,
which restricts to an exact equivalence $\vect X \simto \vect X$.
It is clear that $\rk(\shU \otimes \shV)=\rk(\shU)\rk(\shV)$
for any vector bundles $\shU$ and $\shV$.
In particular, we have that $\rk(\shL \otimes \shV)=\rk(\shV)$ if $\shL$ is a line bundle.
The set $\Pic X$ of isomorphism classes of line bundles over $X$ becomes a group
whose operation is the tensor product $\otimes$ and unit is $\shO_X$.
The inverse of $\shL$ in $\Pic(X)$ is given by
the dual $\shL^{\vee}:=\shHom_{\shO_X}(\shL,\shO_X)$ of $\shL$.
The group $\Pic X$ is called the \emph{Picard group} of $X$.
We can assign a vector bundle $\shV$ of rank $r\ge1$
with a line bundle $\det \shV := \bigwedge^{r} \shV$, 
which is called the \emph{determinant bundle} of $\shV$.
We define the determinant bundle of the zero sheaf $0$ by $\det(0):=\shO_X$.
It gives rise to an additive function $\det \colon \abs{\vect X} \to \Pic X$.
\begin{fct}[{\cite[Section 2.6]{LeP}}]\label{fct:K0 vect}
The following holds for a smooth projective curve $C$.
\begin{enua}
\item
The inclusion functor $\vect C \inj \coh C$ induces
a group isomorphism $K_0(\vect C) \isoto \K_0(\coh C)$.
\item
There is a group isomorphism
\[
K_0(\vect C) \isoto \Pic(C) \times \bbZ,\quad
[\shV] \mapsto (\det \shV, \rk \shV).
\]
\end{enua}
\end{fct}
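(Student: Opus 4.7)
The plan is to establish (1) first and then deduce (2) from it. For (1), I construct an explicit inverse to the map $\iota_*\colon \K_0(\vect C) \to \K_0(\coh C)$ induced by the inclusion. Since $C$ is a smooth curve, the local rings $\shO_{C,x}$ are discrete valuation rings and hence of global dimension at most one; consequently every coherent sheaf $\shF$ admits a locally free resolution of length at most one, $0 \to \shE_1 \to \shE_0 \to \shF \to 0$ with $\shE_i \in \vect C$. Setting $r(\shF) := [\shE_0] - [\shE_1] \in \K_0(\vect C)$ and invoking the standard comparison-of-resolutions argument shows $r$ is independent of the chosen resolution; the horseshoe lemma then yields additivity on conflations in $\coh C$, so $r$ descends to a homomorphism $\K_0(\coh C) \to \K_0(\vect C)$ inverse to $\iota_*$.

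For (2), I first check that $(\det, \rk)$ is a well-defined homomorphism: rank is additive on conflations in $\vect C$, and for $0 \to \shE \to \shF \to \shG \to 0$ in $\vect C$ the classical identity $\det \shF \iso \det \shE \otimes \det \shG$ gives additivity of $\det$. I then define a candidate inverse $\psi\colon \Pic C \times \bbZ \to \K_0(\vect C)$ by $(\shL, n) \mapsto [\shL] + (n-1)[\shO_C]$; the composite $(\det, \rk) \circ \psi$ is visibly the identity on $\Pic C \times \bbZ$.

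The crux is showing that $\psi$ is a group homomorphism, which reduces to the identity
\[
[\shL \otimes \shM] + [\shO_C] = [\shL] + [\shM] \qquad \text{in } \K_0(\vect C).
\]
Invoking (1), it suffices to verify this in $\K_0(\coh C)$. For $\shM = \shO_C(p)$ with $p$ a closed point, the conflations $0 \to \shO_C \to \shO_C(p) \to \shO_p \to 0$ and $0 \to \shL \to \shL \otimes \shO_C(p) \to \shL \otimes \shO_p \to 0$ combined with the local triviality isomorphism $\shL \otimes \shO_p \iso \shO_p$ give the identity directly. The general case then follows by writing $\shM \iso \shO_C(D)$ for a divisor $D = \sum n_i p_i$ and inducting on $\sum \abs{n_i}$, using that $[\shO_C(-p)] + [\shO_C(p)] = 2[\shO_C]$ to handle negative coefficients.

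Finally, to see that $\psi \circ (\det, \rk)$ is the identity on $\K_0(\vect C)$, I induct on the rank of a vector bundle. The rank-one case is trivial, and for $\rk \shV = r \ge 2$ I invoke the classical result that every vector bundle on a smooth projective curve admits a line subbundle with locally free quotient, obtained by saturating a nonzero section of $\shV \otimes \shH^{\otimes n}$ for a fixed ample line bundle $\shH$ and $n$ sufficiently large; this yields a conflation $0 \to \shN \infl \shV \defl \shV' \to 0$ with $\shN \in \Pic C$ and $\shV' \in \vect C$, and combining the inductive hypothesis on $\shV'$ with $\det \shV \iso \shN \otimes \det \shV'$ and the displayed identity closes the induction. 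The principal obstacle is the displayed identity itself: it cannot be verified purely inside $\vect C$ since the natural conflations witnessing it have torsion cokernels, which is precisely why part (1) is indispensable for part (2).
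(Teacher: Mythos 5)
The paper does not actually prove this statement: it is imported as a \textsf{Fact} with a citation to Le Potier, and nothing in Section \ref{ss:M vect} or elsewhere supplies an argument. Your proposal therefore cannot be compared to an internal proof; judged on its own, it is correct and is essentially the standard argument one would find in the cited source. For (1) you use the resolution theorem: the only point you gloss is that regularity of the local rings alone gives local resolutions, whereas the global length-one resolution $0\to\shE_1\to\shE_0\to\shF\to 0$ also needs a surjection from a vector bundle onto $\shF$, which comes from Serre's theorem (Fact \ref{fct:ample}); the kernel is then locally free because it is torsion-free over the DVRs $\shO_{C,x}$. Granting that, the comparison-of-resolutions and horseshoe arguments go through and give the inverse to $\iota_*$. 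For (2), the reduction of the homomorphism property of $\psi(\shL,n)=[\shL]+(n-1)[\shO_C]$ to the identity $[\shL\otimes\shM]+[\shO_C]=[\shL]+[\shM]$ is right, and your verification of that identity in $\K_0(\coh C)$ via the sequences $0\to\shO_C\to\shO_C(p)\to\shO_p\to 0$ and their twists by $\shL$ (using $\shL\otimes\shO_p\iso\shO_p$, and $\Pic C\iso\Cl C$ to write $\shM=\shO_C(D)$ on the smooth curve) is exactly where part (1) earns its keep, as you observe. The final induction on rank via a line subbundle with locally free quotient is also sound; note that the paper's Fact \ref{fct:Atiyah} (Atiyah) would let you extract the required conflation $0\to\shN\to\shV\to\shV'\to 0$ without the saturation argument, since a globally generated twist $\shV(n)$ of rank $r\ge 2$ already contains a trivial subbundle of rank $r-1$ with vector bundle quotient. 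In short: no gap, a complete and self-contained proof of a statement the paper leaves to the literature.
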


We will determine the Grothendieck monoid $\M(\vect C)$ in Proposition \ref{prp:M vect} below.
Let us give a few preliminaries for Proposition \ref{prp:M vect}.
A coherent sheaf $\shF$ on a noetherian scheme $X$ is \emph{globally generated}
if there exists a surjective morphism $\shO_X^{\oplus n} \surj \shF$.
We do not define \emph{very ample} line bundles which appear in the following fact.
See \cite[Section II.5, page 120]{Ha} for the definition.
We only note that any projective variety has a very ample line bundle.
\begin{fct}[Serre {\cite[Theorem 66.2]{FAC}, cf. \cite[Theorem II.5.17]{Ha}}]\label{fct:ample}
Let $X$ be a projective variety over $\bbk$,
and let $\shO(1)$ be a very ample line bundle on $X$.
Then for any coherent sheaf $\shF$ on $X$,
there is an integer $n_0$ such that $\shF\otimes \shO(1)^{\otimes n}$
is globally generated for all $n \ge n_0$.
\end{fct}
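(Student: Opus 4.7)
The plan is to reduce to the case of projective space and then argue by lifting local sections. Since $X$ is a projective variety, there exists a closed immersion $i \colon X \inj \bbP^N$ for some $N$ such that $\shO(1) \iso i^*\shO_{\bbP^N}(1)$. For a coherent sheaf $\shF$ on $X$, the projection formula gives $i_*(\shF \otimes \shO(1)^{\otimes n}) \iso i_* \shF \otimes \shO_{\bbP^N}(n)$, and global generation on $X$ is equivalent to global generation of $i_*\shF(n)$ on $\bbP^N$ (a closed immersion is affine, so surjectivity of $\shO_X^{\oplus m} \surj \shF(n)$ can be checked after pushforward, using that $i^*i_*\shG \iso \shG$ for $\shG \in \coh X$). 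Hence I may assume $X = \bbP^N$ and $\shO(1) = \shO_{\bbP^N}(1)$.

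On $\bbP^N$, consider the standard affine open cover $\{D_+(x_j)\}_{j=0}^N$ and restrict $\shF$ to each $D_+(x_j) \iso \Spec \bbk[x_0/x_j, \dots, x_N/x_j]$. Each $\shF|_{D_+(x_j)}$ corresponds to a finitely generated module, so I can choose finitely many local sections $t_{j,1}, \dots, t_{j,k_j} \in \shF(D_+(x_j))$ generating it. The key technical step is an extension lemma: for any section $t \in \shF(D_+(x_j))$, there exists an integer $n$ such that $x_j^{\,n} \cdot t$ (viewed as a section of $\shF \otimes \shO(n)$ via the trivialization of $\shO(n)|_{D_+(x_j)}$ by $x_j^{\,n}$) extends to a global section of $\shF(n)$ on $\bbP^N$. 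This is proved by covering the closed complement by the other $D_+(x_i)$, using that on overlaps $D_+(x_j) \cap D_+(x_i)$ the section $t$ becomes a section of $\shF|_{D_+(x_i)}$ with denominators only in powers of $x_j$, and multiplying through by a sufficiently high power of $x_j$ clears them and allows gluing to zero on the complement; this is the standard computation underlying Serre's theorem.

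Taking $n_0$ large enough to extend all finitely many generators $t_{j,r}$ simultaneously, the resulting global sections $\tilde{t}_{j,r} \in \Gamma(\bbP^N, \shF(n_0))$ generate the stalk $(\shF(n_0))_x$ at every point $x \in \bbP^N$: indeed, at any $x \in D_+(x_j)$ the sections $\tilde{t}_{j,r}$ restrict (up to a unit coming from the trivialization of $\shO(n_0)$ on $D_+(x_j)$) to the chosen generators $t_{j,r}$ of $\shF|_{D_+(x_j)}$. Hence $\shF(n_0)$ is globally generated, and the same holds for all $n \ge n_0$ by applying the same argument to $\shF(n_0)$ in place of $\shF$ (alternatively, tensor the surjection $\shO^{\oplus m} \surj \shF(n_0)$ with a globally generated line bundle $\shO(n - n_0)$, which is globally generated since $\shO(1)$ is very ample). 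The principal obstacle is the extension lemma, which requires a careful but classical local computation with denominators; the rest of the argument is formal.
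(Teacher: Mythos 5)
The paper states this as a known Fact and gives no proof of its own, deferring entirely to the cited references (Serre, FAC Th\'eor\`eme 66.2, and Hartshorne, Theorem II.5.17); your argument is precisely the standard proof from those sources --- reduction to $\bbP^N$ via the closed immersion and projection formula, the extension lemma producing a global section of $\shF(n)$ from $x_j^{\,n}t$ for $t\in\shF(D_+(x_j))$, and generation of all stalks by the finitely many extended generators --- and it is correct in outline. The only small slip is the first suggestion for passing from $n_0$ to all $n\ge n_0$ (``apply the same argument to $\shF(n_0)$''), which by itself would only yield a new, possibly larger threshold; the alternative you immediately give, tensoring the surjection $\shO^{\oplus m}\twoheadrightarrow\shF(n_0)$ with the globally generated line bundle $\shO(n-n_0)$, is the correct way to finish.
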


\begin{fct}[{Atiyah \cite[Theorem 2]{Ati57}}]\label{fct:Atiyah}
Let $X$ be a smooth projective variety of dimension $d$ over $\bbk$,
and let $\shV$ be a globally generated vector bundle of rank $r$ over $X$.
If $r > d$,
then $\shV$ contains a trivial subbundle of rank $r-d$,
that is, there is an inflation $\shO_X^{\oplus (r-d)} \infl \shV$ in $\vect X$.
\end{fct}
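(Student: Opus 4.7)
The plan is to find the desired trivial subbundle directly by a parameter count on $(r-d)$-tuples of global sections of $\shV$. Set $V := H^0(X,\shV)$, which is a finite-dimensional $\bbk$-vector space since $X$ is projective. For each closed point $x \in X$, the fiber $\shV(x) := \shV_x/\mm_x \shV_x$ is an $r$-dimensional $\bbk$-vector space, and the evaluation map $\ev_x \colon V \to \shV(x)$ is surjective by global generation, with kernel of constant codimension $r$ in $V$.

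First I would introduce the incidence variety
\[
Z := \left\{ \bigl((s_1,\dots,s_{r-d}),x\bigr) \in V^{r-d} \times X \;\middle|\; s_1(x),\dots,s_{r-d}(x) \text{ are linearly dependent in } \shV(x) \right\}
\]
and bound its dimension. For fixed $x \in X$, the fiber $Z_x \subseteq V^{r-d}$ is the preimage under $\ev_x^{r-d} \colon V^{r-d} \to \shV(x)^{r-d}$ of the locus $D_x$ of linearly dependent tuples in $\shV(x) \cong \bbk^r$. Viewing such a tuple as an $r \times (r-d)$ matrix of rank $< r-d$, the classical codimension formula for determinantal varieties gives $\codim(D_x, \shV(x)^{r-d}) = d+1$. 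Since $\ev_x^{r-d}$ is surjective with fibers of constant dimension, $Z_x$ also has codimension $d+1$ in $V^{r-d}$. Combining this with $\dim X = d$ via the projection $Z \to X$ yields $\dim Z \le \dim V^{r-d} - 1$, so the projection $Z \to V^{r-d}$ cannot be surjective. Hence there exists a tuple $(s_1,\dots,s_{r-d})$ whose evaluations $s_1(x),\dots,s_{r-d}(x)$ are linearly independent in $\shV(x)$ for every $x \in X$. The induced morphism $\shO_X^{\oplus (r-d)} \to \shV$ is then injective on every fiber, so by Nakayama its cokernel is locally free of rank $d$, making the morphism an inflation in $\vect X$.

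The main obstacle I expect is the determinantal codimension estimate: the formula $\codim = (m-k)(n-k)$ for the variety of $m \times n$ matrices of rank at most $k$ is classical but needs some care, and it is precisely the geometric input that makes the hypothesis $r > d$ decisive (forcing $\codim > d$). Everything else is a clean parameter count, with the passage from \emph{fiberwise injective} to \emph{inflation in $\vect X$} resting on the standard fact that a morphism of vector bundles is a subbundle inclusion precisely when it is injective on every fiber.
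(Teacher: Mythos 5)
The paper does not prove this statement at all: it is imported as a black-box \emph{Fact} with a citation to Atiyah, so there is no internal proof to compare against. Your argument is a correct, self-contained proof of the cited result, and it is essentially the classical general-position argument. The dimension count is right: the locus $D_x$ of linearly dependent $(r-d)$-tuples in $\shV(x)\cong\bbk^r$ is the determinantal variety of $r\times(r-d)$ matrices of rank at most $r-d-1$, of codimension $(r-(r-d-1))\cdot 1=d+1$; pulling back along the surjective linear map $\ev_x^{r-d}$ preserves codimension, the fiber-dimension theorem applied to the closed incidence locus $Z\subseteq V^{r-d}\times X$ gives $\dim Z\le\dim V^{r-d}-1$, and properness of $X$ makes the image of $Z$ in $V^{r-d}$ a proper closed subset. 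Two small points deserve a word but are covered by the paper's standing conventions: you need $\bbk$ algebraically closed (or at least infinite) to produce a rational point of $V^{r-d}$ outside that proper closed subset, and you check fiberwise injectivity only at closed points, which suffices because the degeneracy locus of $\shO_X^{\oplus(r-d)}\to\shV$ is closed and, if nonempty, contains a closed point. The final step (fiberwise injective between vector bundles implies locally split with locally free cokernel of rank $d$, hence an inflation in $\vect X$) is correctly invoked. Compared with Atiyah's original treatment, which peels off one nowhere-vanishing general section at a time (the zero locus of a general section of a globally generated bundle of rank $>d$ is empty by the same kind of count) and then inducts on the rank of the quotient, your determinantal formulation produces all $r-d$ sections simultaneously; the inductive route avoids the determinantal codimension formula at the cost of having to verify that each successive quotient is again globally generated, while yours concentrates all the geometry into that one classical formula. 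Note also that smoothness of $X$ is never used in your argument, so you in fact prove slightly more than the stated Fact.
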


We prepare notations to use the following proof.
Let $\shO(1)$ be a very ample line bundle on a smooth projective curve $C$.
We set $\shO(n):=\shO(1)^{\otimes n}$ when $n\ge 0$
and $\shO(n):=\left({\shO(1)^{\vee}}\right)^{\otimes n}$ when $n<0$.
For a coherent sheaf $\shF$ on $C$, we set $\shF(n):=\shF\otimes \shO(n)$.
Then $\shF(n) \otimes \shO(m)\iso \shF(n+m)$ holds for any integers $n$ and $m$.
\begin{prp}\label{prp:M vect}
The following hold.
\begin{enua}
\item
A vector bundle is simple in $\vect C$ if and only if
it is a line bundle.
\item
$\M(\vect C)$ is a cancellative monoid,
that is, the natural monoid homomorphism $\M(\vect C) \to \K_0(\vect C)$ is injective
(see Definition \ref{dfn:property mon} and Remark \ref{rmk:compare grp mon}).
\item
There is a monoid isomorphism
\[
\M(\vect C) \isoto \left(\Pic C \times \bbN^+\right) \cup\{(\shO_C,0)\}
\subseteq \Pic C \times \bbZ,\quad
[\shV] \mapsto (\det \shV, \rk \shV),
\]
where $\bbN^{+}:=\bbN\setminus\{0\}$ is the semigroup of strictly positive integers.
\end{enua}
\end{prp}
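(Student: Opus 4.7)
The plan is to handle (1) first (a direct rank argument), derive (2) as a formal consequence of (3), and devote the bulk of the work to (3) by establishing the normal form $[\shV] = [\det \shV] + (\rk \shV - 1)[\shO_C]$ in $\M(\vect C)$ for every nonzero vector bundle $\shV$.

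For (1), one direction is immediate: if $\shL$ is a line bundle and $\shU \infl \shL$ is an inflation in $\vect C$, then both $\shU$ and $\shL/\shU$ are vector bundles whose ranks sum to $1$, forcing $\shU = 0$ or $\shU = \shL$. Conversely, if $\shV$ has rank $n \ge 2$, choose $m$ large so that $\shV(m)$ is globally generated (Fact~\ref{fct:ample}); Fact~\ref{fct:Atiyah} with $d = 1$ produces an inflation $\shO_C^{\oplus(n-1)} \infl \shV(m)$ in $\vect C$, which untwists via $-\otimes \shO(-m)$ to a proper nontrivial subbundle of $\shV$, contradicting simplicity. Granting the normal form in (3), the determinant--rank map $\phi \colon [\shV] \mapsto (\det \shV, \rk \shV)$ lands in $(\Pic C \times \bbN^+) \cup \{(\shO_C, 0)\}$, is surjective onto this set (hit $(\shL, n)$ with $\shL \oplus \shO_C^{\oplus(n-1)}$), and is injective because both sides of the normal form are determined by $(\det, \rk)$. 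Part (2) is then immediate: $\M(\vect C)$ embeds into the group $\Pic C \times \bbZ$, so it is cancellative by Proposition~\ref{prp:grp comp}, and the embedding factors through $\K_0(\vect C) \iso \Pic C \times \bbZ$ by Fact~\ref{fct:K0 vect}.

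For the normal form, the main ingredient is a Key Lemma asserting that
\[
[\shL_1] + [\shL_2] = [\shO_C] + [\shL_1 \otimes \shL_2] \quad \text{in } \M(\vect C)
\]
for arbitrary line bundles $\shL_1, \shL_2$. In the special case where both are globally generated, I pick any nonzero $s \in H^0(\shL_1)$, note that the zero locus $Z(s)$ is finite, and use that $\bbk$ is infinite to pick $t \in H^0(\shL_2)$ avoiding the finitely many evaluation hyperplanes $\{t' : t'(x) = 0\}$ for $x \in Z(s)$; the map $(s, t) \colon \shO_C \to \shL_1 \oplus \shL_2$ is then fiberwise nonzero, hence an inflation in $\vect C$ with quotient a line bundle of determinant $\shL_1 \otimes \shL_2$, so isomorphic to $\shL_1 \otimes \shL_2$. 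For arbitrary $\shL_1, \shL_2$, choose $m$ so that $\shL_1(m)$ and $\shL_2(m)$ are both globally generated (Fact~\ref{fct:ample}); the special case combined with $-\otimes \shO(-m)$ gives $[\shL_1] + [\shL_2] = [\shO(-m)] + [\shL_1 \otimes \shL_2 \otimes \shO(m)]$, and the same procedure applied to the pair $(\shO_C, \shL_1 \otimes \shL_2)$ yields the identical right-hand side, so the Key Lemma follows by equating left-hand sides. Iterating produces $\sum_{i=1}^k [\shL_i] = [\bigotimes_i \shL_i] + (k-1)[\shO_C]$. Finally, for a vector bundle $\shV$ of rank $n \ge 2$, Fact~\ref{fct:Atiyah} applied to a globally generated twist $\shV(m)$ provides a conflation $0 \to \shO_C^{\oplus(n-1)} \to \shV(m) \to \det \shV \otimes \shO(mn) \to 0$ in $\vect C$; untwisting by $-\otimes \shO(-m)$ and invoking the iterated Key Lemma on the resulting $n$ line bundles collapses the expression to $[\det \shV] + (n-1)[\shO_C]$.

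The main obstacle is the Key Lemma. One might hope to find a single rank-$2$ bundle $\shW$ fitting simultaneously into sequences $0 \to \shL_1 \to \shW \to \shL_2 \to 0$ and $0 \to \shO_C \to \shW \to \shL_1 \otimes \shL_2 \to 0$, but such a common extension is not obviously available, so instead the argument must go through globally generated twists and exploit the fact that two different twisted identities have matching right-hand sides to cancel the twist. Once the Key Lemma is established, the rest of (3) is a mechanical application of Fact~\ref{fct:Atiyah} together with the exact equivalence $-\otimes \shO(m)$, and the computation of the monoid isomorphism itself is purely formal.
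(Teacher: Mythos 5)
Your proof is correct, but it reaches part (3) by a genuinely different route than the paper. The paper proves injectivity of $(\det,\rk)$ by comparing the two bundles $\shU,\shV$ directly: it picks a single twist $n$ making both globally generated, applies Fact \ref{fct:Atiyah} to split $\shO(-n)^{\oplus(r-1)}$ off each, and notes that the two line-bundle quotients must agree because their determinants do; the resulting normal form $(r-1)[\shO(-n)]+[\shL]$ depends on the pair through $n$ and is never made canonical. You instead establish the absolute normal form $[\shV]=[\det\shV]+(\rk\shV-1)[\shO_C]$, which requires the extra relation $[\shL_1]+[\shL_2]=[\shO_C]+[\shL_1\otimes\shL_2]$ --- a relation the paper never states or needs. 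This costs you the twist-cancellation trick (matching the two twisted right-hand sides to eliminate the auxiliary $\shO(\pm m)$), but it buys an explicit, presentation-flavoured description of $\M(\vect C)$ and makes the final identification of the image purely formal. One simplification worth noting: your hand-built nowhere-vanishing section of $\shL_1\oplus\shL_2$ (choosing $t$ off finitely many evaluation hyperplanes) is exactly the rank-$2$, dimension-$1$ case of Fact \ref{fct:Atiyah} applied to the globally generated bundle $\shL_1\oplus\shL_2$, so the special case of your Key Lemma follows from that fact plus the determinant computation, just as in your higher-rank step. Parts (1) and (2) are handled essentially as in the paper, with (2) correctly reduced to the embedding into the group $\Pic C\times\bbZ$ via Fact \ref{fct:K0 vect}.
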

\begin{proof}
(1)
Let $\shV$ be a vector bundle of rank $r$.
Then there is some integer $n$ such that $\shV(n)$ is globally generated
by Fact \ref{fct:ample}.
If $r>1$,
then there is an inflation $\shO_C^{\oplus (r-1)} \infl \shV(n)$ in $\vect(C)$
by Fact \ref{fct:Atiyah}.
Since the functor $-\otimes \shO(-n)\colon \vect C \simto \vect C$ is exact,
we have an inflation $\shO(-n)^{\oplus (r-1)} \infl \shV$.
Thus a simple object in $\vect C$ has to be a line bundle.
Conversely, a line bundle is a simple object in $\vect C$ 
because $\rk \colon \abs{\vect C} \mapsto \bbN$ is a length-like function.

(2)
Define a monoid homomorphism by $\Phi :=(\det,\rk) \colon \M(\vect C) \to \Pic C \times \bbN$.
Consider the following commutative diagram:
\[
\begin{tikzcd}
\M(\vect C) \arr{d,"\Phi"'} \arr{r} & \K_0(\vect C) \arr{d,"\iso"sloped}\\
\Pic C \times \bbN \arr{r,hook} & \Pic C \times \bbZ.
\end{tikzcd}
\]
It is enough to show that $\Phi$ is injective.
Take vector bundles $\shU$ and $\shV$ such that $\Phi(\shU)=\Phi(\shV)$.
That is, they satisfy $\det \shU\iso \det \shV$ and $r:=\rk\shU = \rk\shV$.
It follows from Fact \ref{fct:ample} that
$\shU(n)$ and $\shV(n)$ are globally generated for some same integer $n$.
Then there are conflations
\[
0\to\shO(-n)^{\oplus (r-1)} \to \shU \to \shL \to 0
\quad\text{and}\quad
0\to\shO(-n)^{\oplus (r-1)} \to \shV \to \shM \to 0
\]
in $\vect C$ by Fact \ref{fct:Atiyah}.
Here $\shL$ and $\shM$ are line bundles.
Then we have
\[
\shL=\det \shL 
\iso \det\shU\otimes\det\left( \shO(-n)^{\oplus (r-1)} \right)^{\vee}
\iso \det\shV\otimes\det\left( \shO(-n)^{\oplus (r-1)} \right)^{\vee}
\iso \det \shM=\shM.
\]
Hence we obtain $[\shU]=[\shL]+(r-1)[\shO(-n)]=[\shM]+(r-1)[\shO(-n)]=[\shV]$ in $\M(\vect C)$.
This proves $\Phi$ is injective.

(3)
It follows from $\Ima\Phi=\left(\Pic C \times \bbN^+\right) \cup\{(\shO_C,0)\}$.
\end{proof}

\begin{cor}\label{cor:Serre vect}
The exact category $\vect C$ has no nontrivial Serre subcategories.
\end{cor}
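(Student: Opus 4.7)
The plan is to apply the face/Serre correspondence together with the explicit description of the Grothendieck monoid of $\vect C$.

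By Corollary \ref{cor:Serre bij}, classifying Serre subcategories of $\vect C$ is equivalent to classifying faces of $\M(\vect C)$. By Proposition \ref{prp:M vect}(3), there is a monoid isomorphism
\[
\M(\vect C) \iso \bigl(\Pic C \times \bbN^+\bigr) \cup \{(\shO_C,0)\} \subseteq \Pic C \times \bbZ,
\quad [\shV]\mapsto (\det\shV,\rk\shV),
\]
where addition on the right is the obvious componentwise one $(\shL_1,n_1)+(\shL_2,n_2)=(\shL_1\otimes\shL_2,n_1+n_2)$. Thus it suffices to show that this monoid $M$ has only the two trivial faces $\{(\shO_C,0)\}$ and $M$ itself, which correspond to $0$ and $\vect C$.

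The main step is as follows. Let $F$ be a face of $M$ with $F\ne \{(\shO_C,0)\}$, and pick $(\shL,n)\in F$ with $n\ge 1$. I claim every $(\shM,m)\in M$ with $m\ge 1$ lies in $F$. Given such $(\shM,m)$, choose an integer $k$ with $kn>m$, and set
\[
\shN := \shL^{\otimes k}\otimes \shM^{\vee},\qquad p := kn-m \ge 1.
\]
Then $(\shN,p)\in M$ (since $p\ge 1$), and in $M$ one computes
\[
(\shM,m)+(\shN,p) = (\shL^{\otimes k}, kn) = k\cdot(\shL,n).
\]
The right hand side lies in $F$ because $F$ is a submonoid containing $(\shL,n)$, and then the face property forces both summands, in particular $(\shM,m)$, to belong to $F$. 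Combined with $(\shO_C,0)\in F$, this gives $F=M$.

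There is no real obstacle; the only thing to be careful about is verifying that the computed element $(\shN,p)$ genuinely lies in $M$, which is precisely why one chooses $k$ large enough to make $p=kn-m\ge 1$, placing $(\shN,p)$ in the $\Pic C\times\bbN^+$ part. The Picard component then works out automatically because tensoring by $\shM^{\vee}$ was rigged to make the determinants match.
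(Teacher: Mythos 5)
Your proposal is correct and follows essentially the same route as the paper: reduce via Corollary \ref{cor:Serre bij} and Proposition \ref{prp:M vect}(3) to showing the monoid $\bigl(\Pic C \times \bbN^+\bigr)\cup\{(\shO_C,0)\}$ has no nontrivial faces, then use the invertibility of line bundles to exhibit a given element as a summand of an element already in the face. The paper's only (cosmetic) difference is that it first derives $(\shO_C,1)\in F$ from $2(\shL,r)=(\shL^{\otimes 2},2r-1)+(\shO_C,1)$ and then writes $(\shM,s)+(\shM^{\vee},s)=2s(\shO_C,1)$, whereas you absorb $(\shM,m)$ directly into a large multiple $k(\shL,n)$.
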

\begin{proof}
It is enough to show that the monoid
$M:=\left(\Pic C \times \bbN^+\right) \cup\{(\shO_C, 0)\}\subseteq \Pic C \times \bbZ$
has no nontrivial faces by Corollary \ref{cor:Serre bij} and Proposition \ref{prp:M vect} (3).
Let $F$ be a nonzero face of $M$.
There is $(\shL, r) \in F$ such that $(\shL, r) \ne (\shO_C,0)$.
Then we have $(\shO_C,1) \in F$ 
since $2(\shL,r)=(\shL^{\otimes 2},2r-1)+(\shO_C,1)$ in $M$ and $F$ is a face.
For any non-zero element $(\shM, s) \in M$,
we obtain $(\shM,s)+(\shM^{\vee},s)=(\shO_C,2s)=2s(\shO_C,1)\in F$,
and thus $(\shM,s)\in F$.
This means $F=M$, and hence $M$ has no nontrivial faces. 
\end{proof}
\subsection{The case of coherent sheaves}\label{ss:M coh}
We finally deal with the case of the category $\coh C$ of coherent sheaves.
We begin with the relationship between $\tor C$, $\vect C$ and $\coh C$.
\begin{lem}\label{lem:tor pair curve}
The following hold.
\begin{enua}
\item
$\Hom_{\shO_C}(\shT,\shV)=0$ holds for all $\shT\in\tor C$ and $\shV\in\vect C$.
\item
For every coherent sheaf $\shF$ on $C$,
there exists an exact sequence
\[
0\to \shF_{\mathrm{tor}} \to \shF \to \shF_{\mathrm{vect}} \to 0
\]
in $\coh C$ such that $\shF_{\mathrm{tor}} \in \tor C$ and $\shF_{\mathrm{vect}} \in \vect C$.
\end{enua}
In particular, $(\tor C, \vect C)$ is a torsion pair in $\coh C$
(see \cite[Section VI.2]{St} for the definition).
\end{lem}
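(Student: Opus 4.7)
The plan is to handle the two parts independently, after which the torsion pair assertion is immediate from the definition of a torsion pair. Both parts rest on the fact that $C$ is a smooth curve, so $\shO_{C,x}$ is a discrete valuation ring for every closed point $x$ and the function field $K=\shO_{C,\eta}$ for the generic point $\eta$.

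For part (1), I would argue stalk-by-stalk. Let $\varphi\colon \shT\to\shV$ be a morphism with $\shT\in\tor C$ and $\shV\in\vect C$. At any point $x\in C$, the stalk $\shV_x$ is a free $\shO_{C,x}$-module (by definition of a vector bundle, or the generic stalk is a $K$-vector space), hence torsion-free. On the other hand, $\shT_x$ is a torsion $\shO_{C,x}$-module by Lemma \ref{lem:char tor}(i) (and $\shT_\eta=0$ by Lemma \ref{lem:char tor}(3)). Any homomorphism from a torsion module to a torsion-free module over a domain is zero, so $\varphi_x=0$ for every $x$, whence $\varphi=0$.

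For part (2), I would define $\shF_{\mathrm{tor}}$ as the torsion subsheaf of $\shF$: on any affine open $\Spec A\subseteq C$, with $A$ a Dedekind domain and $\shF$ corresponding to the finitely generated $A$-module $M$, set $(\shF_{\mathrm{tor}})|_{\Spec A}$ to be the subsheaf associated to $M_{\mathrm{tor}}=\{m\in M\mid am=0\text{ for some }0\ne a\in A\}$. These glue to a coherent subsheaf $\shF_{\mathrm{tor}}\subseteq\shF$, and its stalks $(\shF_{\mathrm{tor}})_x$ are exactly the torsion submodules of $\shF_x$. By Lemma \ref{lem:char tor} (condition (3): the generic stalk vanishes) we have $\shF_{\mathrm{tor}}\in\tor C$. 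Setting $\shF_{\mathrm{vect}}:=\shF/\shF_{\mathrm{tor}}$, each stalk $(\shF_{\mathrm{vect}})_x$ is a finitely generated torsion-free module over the DVR $\shO_{C,x}$ (and a finite-dimensional $K$-vector space at $\eta$), hence free of finite rank. Thus $\shF_{\mathrm{vect}}$ is locally free of finite rank, i.e. $\shF_{\mathrm{vect}}\in\vect C$, yielding the required conflation
\[
0\to\shF_{\mathrm{tor}}\to\shF\to\shF_{\mathrm{vect}}\to 0.
\]

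Combining (1) and (2) gives exactly the two axioms of a torsion pair in the abelian category $\coh C$. The only mildly delicate point is verifying that the pointwise-defined $\shF_{\mathrm{tor}}$ really is a coherent subsheaf (and that the quotient is a locally free sheaf of uniform rank), but this is standard once one works on an affine cover by spectra of Dedekind domains and invokes the structure theorem for finitely generated modules over a DVR; I expect this to be the only place requiring any care.
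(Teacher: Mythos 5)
Your proof is correct, and part (1) is word-for-word the paper's argument: stalks of a torsion sheaf are torsion modules, stalks of a vector bundle are free, so every stalk of the map vanishes. For part (2) you use the same decomposition (torsion subsheaf, locally free quotient), but you build $\shF_{\mathrm{tor}}$ by gluing the torsion submodules $M_{\mathrm{tor}}$ over an affine cover, whereas the paper defines it in one stroke as the kernel of the unit morphism $\shF \to j_*j^*\shF = j_*\shF_{\eta}$ for $j\colon \Spec K(C)\to C$. The two constructions give the same subsheaf, but the paper's version dissolves exactly the point you flag as delicate: coherence is automatic (a kernel of a map out of a coherent sheaf into a quasi-coherent one, cut out inside $\shF$), and the quotient $\shF/\shF_{\mathrm{tor}}$ embeds into the constant sheaf $j_*\shF_{\eta}$, so its stalks are visibly torsion-free $\shO_{C,x}$-submodules of the fixed $K(C)$-vector space $\shF_{\eta}$ --- which also makes the rank constant, equal to $\dim_{K(C)}\shF_{\eta}$, without a separate connectedness argument. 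Your gluing route does work (torsion submodules of finitely generated modules over noetherian domains commute with localization, since $M_{\mathrm{tor}}=\Ker(M\to M\otimes_A K)$), but if you want to keep it you should say this one sentence rather than leave it as ``standard''; otherwise I would simply adopt the kernel description.
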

\begin{proof}
(1)
Let $f\colon \shT \to \shV$ be a morphism 
from a coherent torsion sheaf to a vector bundle  in $\coh C$.
Then $\shT_x$ is a torsion module and $\shV_x$ is a free module for any $x\in C$
by Lemma \ref{lem:char tor} and the definition of vector bundles.
Hence $f_x\colon \shT_x \to \shV_x$ is equal to zero for all $x\in C$.
This implies $f=0$.

(2)
Let $\eta$ be the generic point of $C$ and 
$K(C):=\shO_{C,\eta}$ the function field of $C$.
Consider the natural morphism $j\colon \Spec K(C) \to C$.
Define a coherent sheaf $\shF_{\mathrm{tor}}$ by the kernel of the unit morphism
$\shF \to j_*j^*\shF=j_*\shF_{\eta}$.
Note that $j_*\shF_{\eta}$ is a constant sheaf on $C$ with value $\shF_{\eta}$.
Thus we have $\shF_{\mathrm{tor}}(U)=\{s\in\shF(U) \mid s_{\eta}=0\}$
for every open subset $U$ of $C$.
Then it is clear that $\shF_{\mathrm{tor},\eta}=0$,
and thus $\shF_{\mathrm{tor}}$ is a coherent torsion sheaf.
Set $\shF_{\mathrm{vect}}:=\shF/\shF_{\mathrm{tor}} \in \coh C$.
Then $\shF_{\mathrm{vect}}$ is a subsheaf of the constant sheaf $j_*\shF_{\eta}$.
Hence $\left(\shF_{\mathrm{vect}}\right)_x$ is an $\shO_{C,x}$-submodule of $\shF_{\eta}$ 
for every point $x\in C$.
This implies $\left(\shF_{\mathrm{vect}}\right)_x$ is a torsion-free $\shO_{C,x}$-module,
and thus it is a free $\shO_{C,x}$-module since $\shO_{C,x}$ is a discrete valuation ring.
For this reason, $\shF_{\mathrm{vect}}$ is a vector bundle.
\end{proof}

We will determine the structure of the Grothendieck monoid $\M(\coh C)$ 
in Proposition \ref{prp:M coh} below.
For this, we recall the relation between divisors and line bundles.
We can attach to a divisor $D$ a line bundle $\shO_C(D)$.
It gives rise to a group homomorphism
\[
\Div C \to \Pic C,\quad
D \mapsto \shO_C(D).
\]
For any effective divisor $D = \sum_{i=1}^n n_i x_i$ on $C$,
we set $\shO_D :=\bigoplus_{i=1}^n \shO_{n_i x_i}$
(see the sentence before Lemma \ref{lem:char tor} for the definition of $\shO_{n x}$).
Then there is the following exact sequence in $\coh C$:
\begin{equation}\label{eq:div ses}
0\to \shO_C(-D) \to \shO_C \to \shO_D \to 0.
\end{equation}

Note that the abelian category $\coh C$ is not length 
since there is an infinite subobject series of $\shO_C$:
\[
\cdots \subsetneq \shO_C(-3 x) \subsetneq \shO_C(-2 x) \subsetneq \shO_C(-x)  \subsetneq \shO_C,
\]
where $x$ is a closed point of $C$.
Thus we cannot use the results in \S \ref{ss:length exact}.
\begin{prp}\label{prp:M coh}
The following hold.
\begin{enua}
\item
The inclusion functors $\tor C \inj \coh C$ and $\vect C \inj \coh C$ induce
injective monoid homomorphisms $\M(\tor C) \inj \M(\coh C)$ 
and $\M(\vect C) \inj \M(\coh C)$, respectively.
\item
For any line bundle $\shL$ and any effective divisor $D$,
we have $[\shL]+[\shO_D]=[\shL\otimes \shO_C(D)]$.
\item
$\M(\coh C)$ is the disjoint union of $\M_{\tor C}$ 
and $\M_{\vect C}^+:=\M_{\vect C}\setminus\{0\}$ as a set.
\end{enua}
\end{prp}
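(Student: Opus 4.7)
The plan is to handle the three parts in order, leveraging what has already been established. For (1), the statement for $\tor C$ is immediate from Proposition \ref{prp:inj Serre}, since $\tor C$ is a Serre subcategory of $\coh C$. For $\vect C$, I will exploit the commutative square
\[
\begin{tikzcd}
\M(\vect C) \arr{r} \arr{d} & \M(\coh C) \arr{d}\\
\K_0(\vect C) \arr{r,"\iso"'} & \K_0(\coh C),
\end{tikzcd}
\]
in which the bottom row is an isomorphism by Fact \ref{fct:K0 vect}\,(1) and the left arrow is injective by Proposition \ref{prp:M vect}\,(2). Hence the composite $\M(\vect C) \to \K_0(\coh C)$ is injective, forcing the top arrow to be injective as well.

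For (2), I will tensor the fundamental sequence \eqref{eq:div ses} by the line bundle $\shL \otimes \shO_C(D)$; since tensoring by a line bundle is exact and $\shO_C(-D) \otimes \shO_C(D) \iso \shO_C$, this yields a conflation in $\coh C$
\[
0 \to \shL \to \shL \otimes \shO_C(D) \to \shO_D \otimes \shL \otimes \shO_C(D) \to 0.
\]
The small but crucial observation is that $\shO_D \otimes \shN \iso \shO_D$ for any line bundle $\shN$: $\shO_D$ is supported at finitely many closed points $x$, and at each such $x$ the stalk $\shN_x$ is a free $\shO_{C,x}$-module of rank one, so the tensor product is unchanged stalk-wise. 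Applying the universal additive function to the resulting conflation yields the asserted identity.

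For (3) I split the claim into disjointness and covering. Disjointness will follow from the rank function $\rk \colon \M(\coh C) \to \bbN$ induced by the additive function $\shF \mapsto \dim_{K(C)} \shF_\eta$: by Lemma \ref{lem:char tor}\,(3) it vanishes on $\M_{\tor C}$, whereas $\rk([\shV]) = \rk \shV \ge 1$ for any nonzero vector bundle, so $\M_{\tor C} \cap \M_{\vect C}^+ = \emptyset$. For the covering, take $\shF \in \coh C$ and apply the torsion pair decomposition (Lemma \ref{lem:tor pair curve}\,(2)) to write $[\shF] = [\shF_{\mathrm{tor}}] + [\shF_{\mathrm{vect}}]$. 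If $\shF_{\mathrm{vect}} = 0$ the class lies in $\M_{\tor C}$; otherwise, using Lemma \ref{lem:char tor}\,(ii), I write $\shF_{\mathrm{tor}} \iso \shO_D$ for some effective divisor $D$, and reuse the Atiyah-type construction from the proof of Proposition \ref{prp:M vect} (via Fact \ref{fct:ample} and Fact \ref{fct:Atiyah}) to produce a conflation $0 \to \shO_C(-n)^{\oplus (r-1)} \to \shF_{\mathrm{vect}} \to \shL \to 0$ with $\shL$ a line bundle, $n$ sufficiently large, and $r = \rk \shF_{\mathrm{vect}} \ge 1$. Combining with part (2) then gives
\[
[\shF] = [\shO_C(-n)]^{\oplus (r-1)} + [\shL] + [\shO_D] = [\shO_C(-n)^{\oplus (r-1)} \oplus (\shL \otimes \shO_C(D))],
\]
which is the class of a rank-$r$ vector bundle, hence lies in $\M_{\vect C}^+$.

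The only real technical content is the isomorphism $\shO_D \otimes \shN \iso \shO_D$ used in (2); once that is in hand, everything else is an assembly of already-proven facts (the cancellativity of $\M(\vect C)$, the torsion/vector-bundle decomposition, Fact \ref{fct:K0 vect}, and the Atiyah inflation argument from Proposition \ref{prp:M vect}).
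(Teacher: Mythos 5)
Your proposal is correct and follows essentially the same route as the paper: part (1) via Proposition \ref{prp:inj Serre} and the commutative square with $\K_0(\vect C)\isoto\K_0(\coh C)$, part (2) by tensoring \eqref{eq:div ses} with $\shL\otimes\shO_C(D)$ and using $\shO_D\otimes\shN\iso\shO_D$, and part (3) via the torsion pair decomposition, writing $[\shV]$ as a sum of line bundle classes, and absorbing $[\shO_D]$ into one of them using (2). The only difference is that you spell out the disjointness of $\M_{\tor C}$ and $\M_{\vect C}^+$ via the generic-rank additive function, which the paper leaves implicit; this is a harmless and welcome completion rather than a different argument.
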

\begin{proof}
(1)
The natural monoid homomorphism $\M(\tor C) \to \M(\coh C)$ is injective
by Proposition \ref{prp:inj Serre}.
We prove that the natural monoid homomorphism $\iota \colon \M(\vect C) \to \M(\coh C)$
is injective.
Recall that $\M(\vect C)$ is cancellative
and the natural homomorphism $\K_0(\vect C) \to \K_0(\coh C)$ is an isomorphism
by Proposition \ref{prp:M vect} and Fact \ref{fct:K0 vect}.
It follows that $\iota$ is injective by the following commutative diagram:
\[
\begin{tikzcd}
\M(\vect C) \arr{r,"\iota"} \arr{d,hook} & \M(\coh C) \arr{d}\\
\K_0(\vect C) \arr{r,"\iso"} & \K_0(\coh C).
\end{tikzcd}
\]

(2)
We first note that $\shT \otimes \shL \iso \shT$ for any coherent torsion sheaf $\shT$.
Applying the exact functor $-\otimes(\shL\otimes \shO_C(D)) \colon \coh C \iso \coh C$ 
to the exact sequence \eqref{eq:div ses},
we get an exact sequence
\[
0\to \shL \to \shL\otimes\shO_C(D) \to \shO_D \to 0.
\]
Hence we have the equality $[\shL]+[\shO_D]=[\shL\otimes \shO_C(D)]$.

(3)
For any coherent sheaf $\shF$,
there exists a coherent torsion sheaf $\shT$ and a vector bundle $\shV$
such that $[\shF]=[\shT]+[\shV]$ by Lemma \ref{lem:tor pair curve}.
Then there is an effective divisor $D$ such that $\shT\iso \shO_D$.
We can write $[\shV]=\sum_{i=1}^r [\shL_i]$ for some line bundles $\shL_i$
by Proposition \ref{prp:M vect}.
If $\shV$ is a nonzero vector bundle,
we have
\[
[\shF]=[\shO_D]+\sum_{i=1}^r [\shL_i]
=[\shL_1 \otimes \shO_C(D)] + \sum_{i=2}^r [\shL_i]
=\left[\bigl(\shL_1 \otimes \shO_C(D)\bigr)\oplus \bigoplus_{i=2}^r \shL_i\right] \in \M_{\vect C}.
\]
This proves the desired conclusion.
\end{proof}

As a corollary of Proposition \ref{prp:M coh},
we recover Fact \ref{fct:Gab 1} for smooth projective curves.
See Definition \ref{dfn:spcl order} for the definition of specialization-closed subsets.
We only note here that a specialization-closed subset of $C$
is either a set of closed points or $C$ itself. 
\begin{cor}[{cf. \cite[Proposition VI.2.4]{Gab62}}]
There is an inclusion-preserving bijection between the following sets:
\begin{itemize}
\item
The set of Serre subcategories of $\coh C$.
\item
The set of specialization-closed subsets of $C$.
\end{itemize}
\end{cor}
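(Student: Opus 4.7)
The plan is to apply Corollary \ref{cor:Serre bij} to translate Serre subcategories of $\coh C$ into faces of $\M(\coh C)$, and then to classify the latter directly. As noted in the paragraph preceding the statement, the specialization-closed subsets of $C$ are exactly $\pow(C(\bbk)) \cup \{C\}$, so the target is to show that the faces of $\M(\coh C)$ are precisely $\{\gen{[\shO_x] \mid x \in A}_\bbN : A \subseteq C(\bbk)\}$ together with $\M(\coh C)$ itself, with the former family corresponding to subsets of closed points and the latter maximal face corresponding to $C$.

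Let $F$ be a face of $\M(\coh C)$. Since $\M(\coh C) = \M_{\tor C} \sqcup \M_{\vect C}^+$ by Proposition \ref{prp:M coh}(3), I split into two cases. In the first case, assume $F \subseteq \M_{\tor C}$; the face property of $F$ in the ambient monoid immediately restricts to make $F$ a face of $\M_{\tor C}$. Under the identification $\M_{\tor C} \cong \M(\tor C) \cong \Div^+(C)$ coming from Proposition \ref{prp:inj Serre} (applied to the Serre subcategory $\tor C \subseteq \coh C$) together with Proposition \ref{prp:M tor}, the monoid $\M_{\tor C}$ is free on the basis $\{[\shO_x] \mid x \in C(\bbk)\}$, so Example \ref{ex:face free monoid} yields $F = \gen{[\shO_x] \mid x \in A}_\bbN$ for a unique $A \subseteq C(\bbk)$. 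The corresponding Serre subcategory is $\catD_F = \{\shF \in \coh C \mid \Supp \shF \subseteq A\}$, matching the specialization-closed subset $A$.

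In the second case, $F$ meets $\M_{\vect C}^+$. The intersection $F \cap \M_{\vect C}$ is a submonoid of $\M_{\vect C}$ and, by a short direct verification using that $F$ is a face of the ambient $\M(\coh C)$, is actually a face of $\M_{\vect C}$. Via the identification $\M_{\vect C} \cong \M(\vect C)$ from Proposition \ref{prp:M coh}(1), Corollary \ref{cor:Serre vect} (equivalently, the monoid-theoretic argument in its proof) forces every nonzero face of $\M_{\vect C}$ to equal $\M_{\vect C}$ itself. Hence $\M_{\vect C} \subseteq F$, and in particular $[\shO_C] \in F$. To pull every torsion class into $F$, I apply Proposition \ref{prp:M coh}(2) with $\shL = \shO_C(-D)$, producing the key identity $[\shO_C(-D)] + [\shO_D] = [\shO_C] \in F$; since $F$ is a face, this forces $[\shO_D] \in F$ for every effective divisor $D$. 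By Proposition \ref{prp:M tor} the classes $[\shO_D]$ exhaust $\M_{\tor C}$, so $\M_{\tor C} \subseteq F$, and together with the previous containment we obtain $F = \M(\coh C)$, corresponding to the Serre subcategory $\coh C$ and to the specialization-closed subset $C$ itself.

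The main obstacle I anticipate is the verification in the second case that $F \cap \M_{\vect C}$ is a face of $\M_{\vect C}$ rather than merely a submonoid; without this reduction Corollary \ref{cor:Serre vect} does not apply. Once it is settled, Proposition \ref{prp:M coh}(2) provides precisely the bridge identity needed to transport the conclusion from $\M_{\vect C}$ to all of $\M(\coh C)$, and the remaining translation between $\pow(C(\bbk)) \cup \{C\}$ and the enumerated faces is pure bookkeeping via Corollary \ref{cor:Serre bij}.
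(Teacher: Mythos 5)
Your proof is correct and follows essentially the same route as the paper: reduce to classifying faces of $\M(\coh C)$ via Corollary \ref{cor:Serre bij}, show that any face meeting $\M_{\vect C}^{+}$ must be all of $\M(\coh C)$ by combining Corollary \ref{cor:Serre vect} with the identity $[\shO_C(-D)]+[\shO_D]=[\shO_C]$ (the paper invokes the exact sequence \eqref{eq:div ses} directly, which is the same content as Proposition \ref{prp:M coh}(2)), and otherwise observe the face lands in the free monoid $\M_{\tor C}$. The one step you flag as the main obstacle is in fact immediate from the definitions: the intersection of a face of $\M(\coh C)$ with \emph{any} submonoid $N$ is automatically a face of $N$, so no further verification is needed.
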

\begin{proof}
It is enough to classify faces of $\M(\coh C)$ by Corollary \ref{cor:Serre bij}.
Let $F$ be a face of $\M(\coh C)$.
If $[\shV] \in F$ for some nonzero vector bundle $\shV$,
it contains $\M_{\vect C}$ by Corollary \ref{cor:Serre vect}.
Then $F$ must coincide with $\M(\coh C)$ by the exact sequence \eqref{eq:div ses}.
Thus if $F\ne \M(\coh C)$, it is contained in $\M_{\tor C}$.
The faces of $\M(\tor C)$ bijectively correspond to 
the subsets of the set $C(\bbk)$ of closed points 
by Corollary \ref{cor:Serre tor}.
Extending this bijection by assigning $\M(\coh C)$ with $C$,
we obtain the desired bijection.
\end{proof}

Now we compare the Grothendieck monoid $\M(\coh C)$ with the Grothendieck group $\K_0(\coh C)$.
There are unique group homomorphisms $\deg, \rk \colon \K_0(\coh C) \to \bbZ$ 
satisfying the following conditions (see \cite[Section 2.6]{LeP}):
\begin{itemize}
\item 
$\rk(\shF) = \rk(\shF_{\mathrm{vect}})$ for any coherent sheaf $\shF$ on $C$.
\item
$\deg(\shO_C(D))=\deg D$ for any divisor $D$ on $C$.
\item
$\deg(\shO_D)=\deg D$ for any effective divisor $D$ on $C$.
\end{itemize}
The image of the map $(\rk,\deg)\colon \K_0(\coh C) \to \bbZ^{\oplus 2}$ is illustrated as follows:
\[
\begin{tikzpicture}[auto]
\fill[gray!30]
(-0.6,-0.6) -- (-0.6,4) -- (4,4) -- (4,-4) -- (0.5,-4) -- (0.5, -0.6) -- cycle;
\fill[gray]
(0.6,-4) -- (0.6,4) -- (1.7,4) -- (1.7,-4);

\draw[thick,black,pattern=north east lines,pattern color=black, rounded corners]
(0.5, 4) -- (0.5, -0.5) -- (-0.5, -0.5) -- (-0.5, 4);
\draw[thick]
(0.6, -4) -- (0.6, 4);
\draw[pattern=grid]
(0.6, -4) -- (0.6, 4) -- (4, 4) -- (4, -4);

\coordinate[label=below left:O] (O) at (0,0); 
\coordinate (XS) at (-1,0); 
\coordinate (XL) at (4,0); 
\coordinate (YS) at (0,-4); 
\coordinate (YL) at (0,4); 
\draw[semithick,->,>=stealth] (XS)--(XL) node[right] {$\rk$}; 
\draw[semithick,->,>=stealth] (YS)--(YL) node[above] {$\deg$}; 

\node (tor) at (0,2) {\contour{white}{$\M_{\tor C}$}};
\node (vect) at (3,2) {\contour{white}{$\M_{\vect C}$}};
\node (pic) at (1.15,2) {\contour{white}{$\M_{\Pic C}$}};
\end{tikzpicture}
\]
Here the gray region corresponds to the Grothendieck monoid $\M(\coh C)$.
Let $\rho \colon \M(\coh C) \to \K_0(\coh C)$ be the natural map.
The map $\rho$ is injective on $\M_{\vect C}$ by Proposition \ref{prp:M vect} and \ref{prp:M coh}.
Whereas, the map $\rho$ loses a lot of information on $\M_{\tor C}$.
Indeed, for two effective divisors $D$ and $E$,
the equality $[\shO_D]=[\shO_E]$ holds in $\K_0(\coh C)$
if and only if $\shO_C( D ) = \shO_C(E)$ in $\Pic C$.

\begin{ex}\label{ex:compare M K_0}
Let $\bbP^1$ be the projective line.
Then $\deg \colon \Pic C \to \bbZ$ is a group isomorphism
(cf. \cite[Example 11.45]{GW}).
In particular,
the map $(\rk,\deg)\colon \K_0(\coh \bbP^1) \to \bbZ^{\oplus 2}$ is a group isomorphism.
For two effective divisors $D$ and $E$ on $\bbP^1$,
the equality $[\shO_D]=[\shO_E]$ holds in $\K_0(\coh C)$
if and only if $\deg D = \deg E$.
Thus the map $\rho$ loses all information except the degrees for torsion sheaves.
In particular, the equality $[\shO_x]=[\shO_y]$ holds in $\K_0(\coh C)$ 
for any closed points $x,y\in\bbP^1(\bbk)$.
Thus the Grothendieck group $\K_0(\coh \bbP^1)$ has no information about closed points of $\bbP^1$.
In contrast,
the Grothendieck monoid $\M(\coh \bbP^1)$ remembers all closed points of $\bbP^1$
because $\M(\coh C) \supseteq \M_{\tor C}= \bigoplus_{x\in \bbP^1(\bbk)} \bbN[\shO_x]$.

This example has another consequence.
Let $\bbk Q$ be the path algebra of Kronecker quiver.
It is well-known that the bounded derived categories $D^b(\coh \bbP^1)$ and $D^b(\catmod \bbk Q)$
are triangulated equivalent.
However, we have a monoid isomorphism $\M(\catmod \bbk Q)\iso \bbN^{\oplus 2}$ 
by Fact \ref{fct:JHP}.
Thus $\M(\coh \bbP^1)$ and $\M(\catmod \bbk Q)$ are not isomorphic as monoids.
This implies the Grothendieck monoids are not derived invariants.
\end{ex}

Finally, we will introduce the notion of the \emph{twisted disjoint union}
to describe the structure of $\M(\coh C)$ in terms of purely monoid-theoretic language.
The rest of this section does not affect the other sections and can be skipped.
We first recall the notion of a monoid action.
Let $M$ be a monoid.
An \emph{$M$-action} on a set $X$ is a monoid homomorphism
$\sigma \colon M\to \End_{\Set}(X):=\Hom_{\Set}(X,X)$.
The pair $X=(X,\sigma)$ is called an \emph{$M$-set}.
Set $\sigma_m :=\sigma(m)$ and $m\cdot x:=\sigma_m(x)$ for all $m\in M$ and $x\in X$.
A map $f\colon X \to Y$ between $M$-sets is \emph{$M$-equivariant}
if $f(m\cdot x)=m\cdot f(x)$ holds for all $m\in M$ and $x\in X$.

Let $X$, $Y$ and $Z$ be $M$-sets.
A map $\alpha \colon X \times Y \to Z$ is an \emph{$M$-bimorphism} if it satisfies
$m\cdot \alpha(x,y)=\alpha(m\cdot x, y)=\alpha(x,m\cdot y)$
for all $m\in M$, $x\in X$ and $y\in Y$.
An \emph{$M$-semigroup} is an $M$-set $S$ with 
an $M$-bimorphism $\alpha \colon S\times S \to S$ satisfying associativity and commutativity.
In other words, it is a (commutative) semigroup $S$ with an $M$-action satisfying
$m\cdot (x+y)=m\cdot x+y =x+ m\cdot y$ for all $m\in M$ and $x,y \in S$.
An \emph{$M$-semigroup homomorphism} is
an $M$-equivariant map $f\colon S \to T$ 
satisfying $f(x+y)=f(x)+f(y)$ for all $x, y\in S$.
We denote by $\SemiGrp_M$ the category of $M$-semigroups and $M$-semigroups homomorphisms.
\begin{ex}\label{ex:homo M-semigrp}
Let $\phi \colon M \to X$ be a monoid homomorphism.
Then $\phi$ defines an action of $M$ on $X$ by $m\cdot x:=\phi(m)+x$ for $m\in M$ and $x\in X$.
We can easily check that $X$ is an $M$-semigroup by this action.
\end{ex}

Let $M$ be a monoid, 
and let $S$ be an $M$-semigroup whose action is given by $\sigma \colon M\to \End_{\Set}(S)$.
The \emph{twisted disjoint union} $M \sqcup_{\sigma} S$ of $M$ and $S$ 
is the set-theoretic disjoint union $M \sqcup S$ with a binary operation given by
\[
x+y:=
\begin{cases}
x+_M y & \text{if both $x\in M$ and $y\in M$,}\\
x+_S y & \text{if both $x\in S$ and $y\in S$,}\\
\sigma_x(y)& \text{if $x\in M$ and $y\in S$,}\\
\sigma_y(x) & \text{if $x\in S$ and $y\in M$,}\\
\end{cases}
\]
where $+_M$ (resp. $+_S$) denotes the binary operation on $M$ (resp. $S$).
We can check easily that $M\sqcup_{\sigma}S$ is a (commutative) monoid.
The natural inclusion $i \colon M\inj M\sqcup_{\sigma}S$ is a monoid homomorphism.
Hence we can think of $M\sqcup_{\sigma} S$ as an $M$-semigroup by Example \ref{ex:homo M-semigrp}.
Then the natural inclusion $j \colon S \inj M\sqcup_{\sigma}S$ is an $M$-semigroup homomorphism.

We describe a universal property of the twisted disjoint union.
We denote by $\Mon_{M/}$ the slice category of $\Mon$ under a monoid $M$.
That is, its objects are monoid homomorphisms $M \to X$,
and morphisms between $\phi\colon M \to X$ and $\psi\colon M \to Y$
are monoid homomorphisms $f\colon X \to Y$ satisfying $f\phi=\psi$.
\begin{prp}\label{prp:UMP tw union}
Let $\phi \colon M \to X$ be a monoid homomorphism.
We regard $X$ as an $M$-semigroup.
Let $S$ be an $M$-semigroup,
and let $i\colon M \to M\sqcup_{\sigma} S$ 
and $j\colon S \to M\sqcup_{\sigma} S$ be the natural inclusions.
Then there is a natural isomorphism
\[
\Hom_{\Mon_{M/}}(M\sqcup_{\sigma} S, X) \isoto \Hom_{\SemiGrp_M}(S,X),\quad
h \mapsto hj.
\]
\end{prp}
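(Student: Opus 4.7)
The plan is to exhibit an explicit two-sided inverse of the map $h \mapsto hj$ and then observe naturality from the constructions. The construction is forced by the universal property: given an $M$-semigroup homomorphism $g \colon S \to X$, one defines $\wti{g} \colon M \sqcup_\sigma S \to X$ by
\[
\wti{g}(x) :=
\begin{cases}
\phi(x) & \text{if } x \in M, \\
g(x)    & \text{if } x \in S.
\end{cases}
\]
Then by construction $\wti{g} \circ i = \phi$ and $\wti{g} \circ j = g$. The bulk of the work is checking that $\wti{g}$ is a monoid homomorphism.

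First I would verify that $h \mapsto hj$ is well-defined: if $h \colon M \sqcup_\sigma S \to X$ satisfies $hi = \phi$, then for $m \in M$ and $s \in S$ one has $m \cdot h(s) = \phi(m) + h(s) = h(i(m)) + h(j(s)) = h(i(m) + j(s)) = h(\sigma_m(s)) = h(j(m \cdot s))$, so $hj$ is $M$-equivariant, and $hj$ is a semigroup homomorphism because $h$ is a monoid homomorphism and $j$ preserves the operation on $S$.

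Next I would check that $\wti{g}$ is a monoid homomorphism by splitting $(x,y) \in (M \sqcup_\sigma S)^2$ into the four cases of the operation defined before the proposition. The pure cases $x,y \in M$ and $x,y \in S$ are immediate from $\phi$ being a monoid homomorphism and $g$ being a semigroup homomorphism, respectively. The mixed case $x = m \in M$, $y = s \in S$ is the substantive one: here $x + y = \sigma_m(s) \in S$, so
\[
\wti{g}(x+y) = g(\sigma_m(s)) = g(m \cdot s) = m \cdot g(s) = \phi(m) + g(s) = \wti{g}(x) + \wti{g}(y),
\]
where the third equality uses $M$-equivariance of $g$ and the fourth uses that the $M$-action on $X$ is given by $\phi$. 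The symmetric case is identical. Preservation of the unit is automatic since $0 \in M$ and $\wti{g}(0) = \phi(0) = 0$.

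It then remains to check that $h \mapsto hj$ and $g \mapsto \wti{g}$ are mutually inverse. One direction, $\wti{hj} = h$, follows because both sides agree on $M$ (equal to $\phi$, using $hi = \phi$) and on $S$ (equal to $hj$), and every element of $M \sqcup_\sigma S$ lies in $M$ or in $S$. The other direction, $\wti{g} \circ j = g$, is immediate from the definition. Naturality in $X$ (as an object of $\Mon_{M/}$) is clear since the bijection is given by precomposition with $j$ on one side and by the piecewise formula on the other, both of which commute with postcomposing by a morphism $X \to X'$ in $\Mon_{M/}$. No step poses a genuine obstacle; the only place any real content enters is the mixed-case verification, where the compatibility between the $M$-action on $X$ (induced by $\phi$) and the twisted operation on $M \sqcup_\sigma S$ (defined via $\sigma$) is exactly what makes $\wti{g}$ respect the operation.
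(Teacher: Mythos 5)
Your proof is correct and complete; it is exactly the routine verification that the paper omits (the paper's ``proof'' of Proposition \ref{prp:UMP tw union} is simply ``We omit the proof since it is straightforward''). The piecewise inverse $g \mapsto \wti{g}$, the four-case check of the operation with the mixed case resting on $M$-equivariance of $g$ and the definition of the $M$-action on $X$ via $\phi$, and the observation that naturality reduces to associativity of composition are all as expected and contain no gaps.
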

\begin{proof}
We omit the proof since it is straightforward.
\end{proof}

Consider the Grothendieck monoid $\M(\coh C)$.
Then $\M(\coh C)$ is an $\M_{\tor C}$-semigroup
by the inclusion homomorphism $\M_{\tor C}\inj \M(\coh C)$.
The subsemigroup $\M_{\vect C}^{+}:=\M_{\vect C}\setminus\{0\}$ is also an $\M_{\tor C}$-semigroup
whose $\M_{\tor C}$-action is given by $\sigma_{[\shO_D]}([\shV]):=[\shO_D]+[\shV]$.
Then the natural inclusion map $\M_{\vect C}^{+} \inj \M(\coh C)$ 
is an $\M_{\tor C}$-semigroup homomorphism.
It induces a monoid homomorphism 
$h \colon \M_{\tor C} \sqcup_{\sigma} \M_{\vect C}^{+} \to \M(\coh C)$
by Proposition \ref{prp:UMP tw union}.
It is clear that $h$ is an isomorphism.
Thus the following statement follows.
\begin{cor}\label{cor:M coh only mon}
There is a monoid isomorphism
\[
\Div^+(C) \sqcup_{\sigma} (\Pic C \times \bbN^+) \isoto \M(\coh C),
\]
where the $\Div^+(C)$-action on $\Pic C \times \bbN^+$ is defined by
$\sigma_D (\shL,r) :=(\shL\otimes \shO_C(D),r)$.
\end{cor}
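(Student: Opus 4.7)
The plan is to apply the universal property of the twisted disjoint union (Proposition \ref{prp:UMP tw union}) to the $\M_{\tor C}$-semigroup structure on $\M_{\vect C}^+$, and then transport everything through the explicit descriptions of $\M(\tor C)$ and $\M(\vect C)$ given in Propositions \ref{prp:M tor} and \ref{prp:M vect}.

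First, I would note that the inclusion $\M_{\tor C} \hookrightarrow \M(\coh C)$ (injective by Proposition \ref{prp:M coh}(1)) makes $\M(\coh C)$ into an $\M_{\tor C}$-semigroup in the sense of Example \ref{ex:homo M-semigrp}. The key point, secured by Proposition \ref{prp:M coh}(2), is that for a line bundle $\shL$ and effective divisor $D$ we have $[\shL] + [\shO_D] = [\shL \otimes \shO_C(D)] \in \M_{\vect C}^+$; combining this with the decomposition $[\shV] = [\shL] + (r-1)[\shO(-n)]$ from the proof of Proposition \ref{prp:M vect}(1) shows that $[\shO_D] + [\shV]$ lies in $\M_{\vect C}^+$ whenever $\shV \ne 0$. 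Hence the restricted $\M_{\tor C}$-action $\sigma_{[\shO_D]}([\shV]) := [\shO_D] + [\shV]$ endows $\M_{\vect C}^+$ with an $\M_{\tor C}$-semigroup structure, and the inclusion $j\colon \M_{\vect C}^+ \hookrightarrow \M(\coh C)$ is an $\M_{\tor C}$-semigroup homomorphism.

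Second, Proposition \ref{prp:UMP tw union} applied to $j$ yields a monoid homomorphism
\[
h \colon \M_{\tor C} \sqcup_\sigma \M_{\vect C}^+ \longrightarrow \M(\coh C)
\]
that restricts to the respective inclusions. That $h$ is a bijection is exactly the disjoint union statement in Proposition \ref{prp:M coh}(3): every element of $\M(\coh C)$ is uniquely either in $\M_{\tor C}$ or in $\M_{\vect C}^+$, which are the two pieces of the twisted disjoint union. Hence $h$ is a monoid isomorphism.

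Third, I would identify $\M_{\tor C} \iso \Div^+(C)$ via $\sum m_i x_i \mapsto \sum m_i [\shO_{x_i}]$ (Proposition \ref{prp:M tor}) and $\M_{\vect C}^+ \iso \Pic C \times \bbN^+$ via $[\shV] \mapsto (\det \shV, \rk \shV)$ (Proposition \ref{prp:M vect}). Under these, the action $\sigma_{[\shO_D]}([\shV])$ corresponds to an element with rank $\rk \shV$ and determinant $\det \shV \otimes \shO_C(D)$: using the conflation $0 \to \shO(-n)^{\oplus(r-1)} \to \shV \to \shL \to 0$ from Proposition \ref{prp:M vect}(1) (so $\det \shV = \shL \otimes \det(\shO(-n)^{\oplus(r-1)})$) and Proposition \ref{prp:M coh}(2), one computes $[\shO_D] + [\shV] = [\shL \otimes \shO_C(D)] + (r-1)[\shO(-n)]$, whose image under $(\det,\rk)$ is $(\det \shV \otimes \shO_C(D), r)$. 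Transporting $h$ through these isomorphisms therefore produces the stated isomorphism $\Div^+(C) \sqcup_\sigma (\Pic C \times \bbN^+) \isoto \M(\coh C)$ with the prescribed action $\sigma_D(\shL, r) = (\shL \otimes \shO_C(D), r)$.

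The only mildly delicate point is the verification that the $\M_{\tor C}$-action preserves $\M_{\vect C}^+$ and computes correctly on determinants, but this is immediate from Proposition \ref{prp:M coh}(2); everything else is a direct application of the earlier structural results.
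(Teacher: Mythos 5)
Your proposal is correct and follows essentially the same route as the paper: the paper also realizes $\M_{\vect C}^{+}$ as an $\M_{\tor C}$-semigroup via $\sigma_{[\shO_D]}([\shV])=[\shO_D]+[\shV]$, obtains $h\colon \M_{\tor C}\sqcup_{\sigma}\M_{\vect C}^{+}\to\M(\coh C)$ from Proposition \ref{prp:UMP tw union}, concludes bijectivity from the disjoint-union statement of Proposition \ref{prp:M coh}(3), and then transports through Propositions \ref{prp:M tor} and \ref{prp:M vect}. Your extra verifications (that the action preserves $\M_{\vect C}^{+}$ and that it acts on determinants by tensoring with $\shO_C(D)$) are points the paper leaves implicit, and your determinant computation via the conflation from Proposition \ref{prp:M vect}(1) together with Proposition \ref{prp:M coh}(2) is correct.
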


\section{The spectrum of the Grothendieck monoid}\label{s:spec}
In this section,
we study the monoid spectrum of the Grothendieck monoid $\M(\catE)$ 
and introduce a topology on the set $\Serre(\catE)$ of Serre subcategories.

In \S \ref{ss:spec mon}, we review the spectrum $\MSpec M$ of a monoid $M$ and monoidal spaces,
which are natural analogies of the spectrum of a commutative ring and ringed spaces, respectively.
In \S \ref{ss:spec Gro}, we introduce a topology on $\Serre (\catE)$ for an exact category $\catE$.
We first reveal the relation between the topologies on $\Serre (\catE)$ and $\MSpec \M(\catE)$.
Next, we construct a sheaf $\shM_{\catA}$ of monoids on $\Serre(\catA)$
for an abelian category $\catA$,
which has a property that the stalk $\shM_{\catA,\catS}$ is isomorphic to $\M(\catA/\catS)$
for any point $\catS\in\Serre(\catA)$.
Here $\catA/\catS$ is the abelian quotient category.
We compare $\MSpec \M(\catA)$ with $(\Serre(\catA),\shM_{\catA})$ as monoidal spaces.
In \S \ref{ss:Recon}, we recover the topology of a noetherian scheme $X$ 
from the Grothendieck monoid $\M(\coh X)$.
\subsection{Preliminaries: the spectrum of a commutative monoid}\label{ss:spec mon}
In this subsection,
we review the spectrum of a monoid.
The main reference is \cite{Og}.
We often refer to \cite{GW}, a textbook of scheme theory
since many constructions are analogies of the spectrum of a commutative ring.
Throughout this subsection, $M$ is a monoid.

\begin{dfn}\hfill
\begin{enua}
\item
A subset $I$ of $M$ is called an \emph{ideal}
if for all $x \in I$ and $a \in M$, we have $x+a\in I$.
\item
An ideal $\pp$ of $M$ is said to be \emph{prime}
if it satisfies (i) $\pp \ne M$ and
(ii) $x+y\in \pp$ implies $x\in\pp$ or $y\in \pp$ for all $x,y\in M$.
\item
The \emph{monoid spectrum} of $M$ is the set $\MSpec M$ of prime ideals of $M$.
\end{enua}
\end{dfn}

For a subset $S$ of $M$, the \emph{ideal} $\gen{S}_{\ideal}$ \emph{generated by $S$}
is the smallest ideal of $M$ containing $S$.
We can describe it as follows:
\[
\gen{S}_{\ideal}:=\left\{ x+a \mid x\in S, a\in M  \right\}.
\]

\begin{rmk}\label{rmk:nonempty Spec}\hfill
\begin{enua}
\item
The set $M^+:=M\setminus M^{\times}$ of non-units is the unique maximal ideal of $M$.
It is also a prime ideal of $M$.
\item
The empty set $\emptyset$ is the unique minimal ideal of $M$. 
It is also a prime ideal of $M$.
\item
The monoid spectrum $\MSpec M$ is never empty and 
has the maximum and minimum element with respect to inclusion by (1) and (2).
$\MSpec M$ consists of exactly one point if and only if $M$ is a group.
\end{enua}
\end{rmk}

The relation between faces and prime ideals is the following.
\begin{fct}[{\cite[Section I.1.4]{Og}}]\label{fct:bij face prime}\hfill
\begin{enua}
\item
$\pp^c:=M \setminus \pp$ is a face of $M$
for any prime ideal $\pp$ of $M$.
\item
$F^c:= M \setminus F$ is a prime ideal of $M$
for any face $F$ of $M$.
\item
The assignments given in (1) and (2) give inclusion-reversing bijections between
$\Face(M)$ and $\MSpec M$.
\end{enua}
\end{fct}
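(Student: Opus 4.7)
The plan is to verify each part by direct set-theoretic manipulation, exploiting that the defining properties of faces and prime ideals are logically dual under complementation. The work is essentially bookkeeping, and no step should be a serious obstacle.

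For (1), let $\pp$ be a prime ideal and set $F := \pp^c$. First I would check $0 \in F$: otherwise $0 \in \pp$, and since $\pp$ is an ideal, $a = 0 + a \in \pp$ for every $a \in M$, contradicting $\pp \ne M$. Next, closure under addition: if $x, y \in F$ but $x + y \in \pp$, primality forces $x \in \pp$ or $y \in \pp$, a contradiction. So $F$ is a submonoid. For the face property, I need both directions of ``$x + y \in F \iff x \in F$ and $y \in F$''. The $(\Leftarrow)$ direction is the submonoid property just checked. For $(\Rightarrow)$, suppose $x + y \in F$ and, say, $x \in \pp$; then the ideal property of $\pp$ gives $x + y \in \pp$, contradicting $x + y \in F$.

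For (2), let $F$ be a face and set $\pp := F^c$. Since $0 \in F$, we have $\pp \ne M$. For the ideal property, take $x \in \pp$ and $a \in M$; if $x + a \in F$, the face property forces $x \in F$, contradicting $x \in \pp$. For primality, if $x + y \in \pp$ but $x, y \in F$, then $x + y \in F$ (since $F$ is a submonoid), a contradiction.

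For (3), I would just observe that $A \mapsto A^c$ is an involution on the power set $\pow(M)$ that reverses inclusion, so parts (1) and (2) together show these two mutually inverse maps restrict to inclusion-reversing bijections between $\Face(M)$ and $\MSpec M$.

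The only point that requires a tiny bit of care is the initial check $0 \in \pp^c$ in part (1), since it uses both that $\pp$ is an ideal and that $\pp \ne M$; apart from that, the proof is a straightforward translation between the two definitions and should take only a few lines.
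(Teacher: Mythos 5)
Your proof is correct and complete; the paper states this result as a Fact cited from \cite[Section I.1.4]{Og} and gives no proof of its own, so there is nothing to compare against. Your direct complementation argument (including the one non-vacuous check that $0\in\pp^c$, which correctly uses both the ideal property and $\pp\ne M$) is exactly the standard verification, and it handles the edge cases $\pp=\emptyset$ and $F=M$ without issue.
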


We will now endow $\MSpec M$ with the structure of a topological space.
For a subset $S$ of $M$, we set
\[
V(S):=\{\pp \in \MSpec M \mid \pp \supseteq S \}.
\]
Note that $V(S)=V\left( \gen{S}_{\ideal} \right)$ holds.
They satisfy the following equalities(cf. \cite[Lemma 2.1]{GW}):
\begin{itemize}
\item
$V(M)=\emptyset$ and $V(\emptyset)= \MSpec M$.
\item
$\bigcap_{\alpha \in A} V(S_{\alpha}) 
= V\left( \bigcup_{\alpha \in A} S_{\alpha} \right)$
for a family $\{S_{\alpha}\}_{\alpha\in A}$ of subsets of $M$.
\item
$V(I)\cup V(J)=V(I\cap J)$ for ideals $I$, $J$ of $M$.
\end{itemize}
These equalities show that we can define a topology on $\MSpec M$
by taking the subsets of the form $V(S)$ to be the closed subsets.
We call it the \emph{Zariski topology} on $\MSpec M$ .
Note that $\MSpec M$ has a unique closed point $M^+$
and a unique generic point $\emptyset$ by Remark \ref{rmk:nonempty Spec}.
In particular, $\MSpec M$ is an irreducible topological space.

Let
\[
D(f):=\{\pp \in \MSpec M \mid f \not\in \pp \}
\]
for each element $f\in M$.
They are open in $\MSpec M$ since $D(f)=\MSpec M \setminus V(f)$.
They satisfy
\[
D(f) \cap D(g) = D(f+g)
\]
for any $f,g\in M$. 
Open subsets of $\MSpec M$ of this form are called \emph{principal open subsets} of $\MSpec M$.
The set of principal open subsets $D(f)$ forms a basis of the Zariski topology on $\MSpec M$
(cf. \cite[Proposition 2.5]{GW}).

We define a preorder on a topological space.
\begin{dfn}\label{dfn:spcl order}
Let $X$ be a topological space.
\begin{enua}
\item
For two points $x,y \in X$,
we say that $x$ is a \emph{specialization} of $y$ or that $y$ is a \emph{generalization} of $x$
if $x$ belongs to the topological closure $\ol{\{ y\}}$ of $\{y\}$ in $X$. 
Define a preorder $\preceq$ on $X$ by
\[
x\preceq y :\equi \text{$x$ is a specialization of $y$.} 
\]
We call it the \emph{specialization order} on $X$.
When we regard $X$ as a poset by the specialization order,
it is denoted by $X_{\spcl}:=(X, \preceq)$.
\item
A subset $A$ of $X$ is \emph{specialization-closed} (resp. \emph{generalization-closed})
if for any $x\in A$ and every its specialization (resp. generalization) $x'\in X$,
we have that $x'\in A$.
\end{enua}
\end{dfn}
\begin{rmk}\label{rmk:spcl order}
Let $X$ be a topological space.
\begin{enua}
\item
A subset $A$ of $X$ is specialization-closed
if and only if its complement $A^c=X\setminus A$ is generalization-closed.
\item
Any closed subset is specialization-closed.
Also, any open subset is generalization-closed.
\item
Recall that $X$ is called a \emph{$T_0$-space} if for any distinct points,
there exists an open subset containing exactly one of them.
In this case, the specialization order on $X$ is a partial order.
That is, $x\preceq y$ and $y\preceq x$ imply $x=y$ for any $x,y \in X_{\spcl}$.
\end{enua}
\end{rmk}

The specialization order on $\MSpec M$ recovers the inclusion-order on prime ideals.
\begin{prp}\label{prp:spcl MSpec}
The following hold.
\begin{enua}
\item
$\ol{\{\pp\}}=V(\pp)$ for any prime ideal $\pp \subseteq M$.
\item
${\MSpec M}_{\spcl}$ is isomorphic to $\MSpec M$ ordered by reverse inclusion as posets.
\end{enua}
\end{prp}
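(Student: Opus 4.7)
The plan is to mimic the standard proof for the spectrum of a commutative ring, leveraging the formal similarity between $\MSpec M$ and $\Spec R$.

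For part (1), I will prove $\ol{\{\pp\}} = V(\pp)$ by two inclusions. For the inclusion $\ol{\{\pp\}} \subseteq V(\pp)$, I observe that $\pp \in V(\pp)$ trivially (since $\pp \supseteq \pp$), so $V(\pp)$ is a closed subset containing $\pp$, and thus contains the closure $\ol{\{\pp\}}$. For the reverse inclusion $V(\pp) \subseteq \ol{\{\pp\}}$, I note that every closed subset of $\MSpec M$ has the form $V(S)$ for some $S \subseteq M$. If $\pp \in V(S)$, then $S \subseteq \pp$, and any $\qq \in V(\pp)$ satisfies $S \subseteq \pp \subseteq \qq$, whence $\qq \in V(S)$. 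Therefore $V(\pp)$ is contained in every closed subset containing $\pp$, and hence in their intersection $\ol{\{\pp\}}$.

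For part (2), the specialization order on $\MSpec M$ is, by definition, $\pp \preceq \qq$ iff $\pp \in \ol{\{\qq\}}$. By part (1), this is equivalent to $\pp \in V(\qq)$, which by definition of $V(\qq)$ means $\qq \subseteq \pp$. Thus the specialization order coincides exactly with the reverse inclusion order on prime ideals, giving the desired isomorphism of posets (the underlying set is unchanged, and the identity map is the poset isomorphism).

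The argument is almost entirely formal once the definitions of $V(-)$ and of specialization are unpacked, so there is no serious obstacle. The only mild point to be careful about is making sure the basis of closed sets $\{V(S)\}_{S \subseteq M}$ really exhausts all closed subsets of the Zariski topology on $\MSpec M$, which was established in the paragraph preceding the proposition when the topology was defined.
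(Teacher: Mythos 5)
Your proof is correct and is exactly the standard argument the paper has in mind: the paper omits the proof of this proposition as ``straightforward,'' and your two-inclusion verification of $\ol{\{\pp\}}=V(\pp)$ together with the unwinding of the specialization order is precisely the intended routine check. Nothing is missing.
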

\begin{proof}
We omit the proof since it is straightforward.
\end{proof}

We give a topological characterization of principal open subsets $D(f)$.
We will use it to classify finitely generated Serre subcategories 
in Proposition \ref{cor:char fg Serre}.
\begin{dfn}
A topological space $X$ is \emph{strongly quasi-compact}
if for every open covering $\{U_i\}_{i\in I}$ of $X$,
there exists $i\in I$ such that $X=U_i$.
\end{dfn}

\begin{lem}\label{lem:sqc MSpec}
Let $M$ be a monoid.
An open subset $U$ of $\MSpec M$ is strongly quasi-compact
if and only if $U=D(f)$ for some $f\in M$.
\end{lem}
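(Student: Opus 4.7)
The plan is to prove both directions by exploiting the fact that each principal open $D(f)$ of $\MSpec M$ possesses an inclusion-maximal element, namely the prime $\pp_0 := M \setminus \gen{f}_{\face}$, which serves as a ``closed point'' that absorbs every open neighborhood containing it.

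For the ``if'' direction, assume $U = D(f)$ and let $\{V_i\}_{i \in I}$ be open subsets of $\MSpec M$ whose intersections with $D(f)$ form an open cover of $D(f)$ in the subspace topology. Since $\gen{f}_{\face}$ is the smallest face containing $f$, its complementary prime $\pp_0$ is maximal under inclusion among primes avoiding $f$ by Fact \ref{fct:bij face prime}, and in particular $\pp_0 \in D(f)$. Some $V_{i_0}$ therefore contains $\pp_0$, and refining through the principal basis I can pick $g \in M$ with $\pp_0 \in D(g) \subseteq V_{i_0}$. Then $g \notin \pp_0$ means $g \in \gen{f}_{\face}$, so by Fact \ref{fct:description face} there exist $h \in M$ and $n \in \bbN$ with $g + h = nf$. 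If $n = 0$ then $g$ is a unit and $D(g) = \MSpec M \supseteq D(f)$. If $n \geq 1$, then any prime $\pp \ni g$ satisfies $\pp \ni g + h = nf$, and iterated primality forces $\pp \ni f$; hence $V(g) \subseteq V(f)$, i.e., $D(f) \subseteq D(g)$. Either way $V_{i_0} \supseteq D(f)$, establishing strong quasi-compactness.

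For the ``only if'' direction, assume $U$ is open and strongly quasi-compact. Since the principal opens form a basis of $\MSpec M$, the family $\{D(g) : D(g) \subseteq U\}$ is an open cover of $U$ in its subspace topology, so strong quasi-compactness immediately produces some $g \in M$ with $U = D(g)$. The main obstacle — though fairly mild — lies in the algebra-to-topology translation in the ``if'' direction: converting the membership $g \in \gen{f}_{\face}$ (an existence statement about sums $g + h = nf$) into the topological inclusion $D(f) \subseteq D(g)$, which requires separating the unit case $n = 0$ from $n \geq 1$ and applying primality iteratively. Everything else is a routine unpacking of definitions.
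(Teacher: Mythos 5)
Your proof is correct and follows essentially the same route as the paper: both identify $\gen{f}_{\face}^c$ as the maximum element of $D(f)$, observe that some member of the cover must contain it, and conclude that this member contains all of $D(f)$; the reverse direction is identical. The only cosmetic difference is that the paper justifies the final containment by noting that open sets are generalization-closed, whereas you unpack the same fact algebraically by passing to a basic open $D(g)$ and verifying $D(f)\subseteq D(g)$ from $g+h=nf$ via primality.
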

\begin{proof}
We first show that $D(f)$ is strongly quasi-compact.
The subset $D(f)$ has the maximum element $\gen{f}_{\face}^c$ with respect to inclusions.
Indeed, for any $\pp \in D(f)$, we have $f\not\in \pp$.
Since $\pp^c$ is a face and $f\in \pp^c$, we obtain $\gen{f}_{\face} \subseteq \pp^c$.
Thus we conclude that $\gen{f}_{\face}^c \supseteq \pp$.
Let $\{U_i\}_{i\in I}$ be an open covering of $D(f)$.
Then there exists $i\in I$ such that $\gen{f}_{\face}^c \in U_i$,
which implies $D(f)=U_i$ because $U_i$ is generalization-closed.
This proves $D(f)$ is strongly quasi-compact.

Conversely,
suppose that $U$ is a strongly quasi-compact open subset of $\MSpec M$.
Since the principal open subsets are a basis of Zariski topology,
the open subset $U$ is covered by them.
Thus $U=D(f)$ for some $f\in M$ because $U$ is strongly quasi-compact.
\end{proof}

Let us recall localization of monoids,
which is a monoid version of localization of commutative rings,
to introduce the structure sheaf on the monoid spectrum $\MSpec M$.
We recommend that the reader skips the remaining part of this subsection
in the first reading.
\begin{dfn}\label{def:mon-loc}
Let $S$ be a subset of $M$.
The \emph{localization of $M$ with respect to $S$} is 
a monoid $M_S$ together with a monoid homomorphism $\rho \colon M\to M_S$,
which is called the \emph{localization homomorphism},
satisfying the following universal property:
\begin{enur}
\item
$\rho(s)$ is invertible for each $s\in S$.
\item
For any monoid homomorphism $\phi \colon M \to X$ 
such that $\phi(s)$ is invertible for each $s\in S$,
there is a unique monoid homomorphism $\ol{\phi} \colon M_S \to X$ satisfying $\phi=\ol{\phi}\rho$.
\end{enur}
\end{dfn}
The localization of $M$ with respect to a subset $S \subseteq M$ always exists.
It is constructed as follows:

Let $\la S \ra_{\bbN}$ be the submonoid of $M$ generated by $S$.
Define an equivalence relation on $M \times \la S \ra_{\bbN}$ by
\[
(x,s) \sim (y,t) :\equi \text{there exist $u\in \la S \ra_{\bbN}$ such that $x+t+u=y+s+u$ in $M$.}
\]
Then the quotient set $M_S:=M\times \la S \ra_{\bbN} /{\sim}$
has a natural monoid structure given by
\[
[x,s]+[y,t]:=[x+y,s+t],
\]
where $[x,s]$ denotes the equivalence class containing $(x,s)\in M\times \la S \ra_{\bbN}$.
We can think of $[x,s]$ as ``$x-s$''.
Then we can check easily that 
the monoid $M_S$ with a monoid homomorphism $\rho \colon M \to M_S$ defined by $\rho(m)=[m,0]$
satisfies the universal property of the localization.


The monoid spectrum $\MSpec M$ is equipped with a natural sheaf of monoids.
\begin{fct}[{\cite[Section II.1.2]{Og}}]
There is a sheaf $\shO_M$ of monoids on $\MSpec M$,
which is called the \emph{structure sheaf},
satisfying the following.
\begin{enua}
\item
For any element $f\in M$,
we have that $\shO_{M}(D(f)) = M_f$.
\item
In particular, we have that $\shO_{M}(\MSpec \M(\catA)) = M$.
\item
For any point $\pp \in \MSpec M$,
the stalk $\shO_{M,\pp}$ of $\shO_{M}$
is isomorphic to the localization $M_{\pp^c}$ of $M$ with respect to $\pp^c:= M\setminus \pp$.
\end{enua}
\end{fct}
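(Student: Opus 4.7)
The plan is to imitate the classical construction of the structure sheaf on $\Spec R$, substantially simplified by the special fact that each principal open $D(f)$ has a unique maximum point $\gen{f}_{\face}^c$ (observed in the proof of Lemma~\ref{lem:sqc MSpec}). First I would define $\shO_M$ on the basis of principal opens by setting $\shO_M(D(f)) := M_f$, and then extend to arbitrary opens by the standard $B$-sheaf extension procedure.

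To make this well-defined I first show that $D(f) = D(g)$ implies a canonical isomorphism $M_f \iso M_g$: the maximum-point description forces $\gen{f}_{\face} = \gen{g}_{\face}$, so $f$ becomes invertible after inverting $g$ and vice versa, and two applications of the universal property of localization (Definition~\ref{def:mon-loc}) produce the isomorphism. For an inclusion $D(g) \subseteq D(f)$, the same argument yields $\gen{f}_{\face} \subseteq \gen{g}_{\face}$, so $f$ is a unit in $M_g$, and universality provides a canonical restriction homomorphism $M_f \to M_g$. Functoriality of these restrictions follows again from universality.

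The sheaf axiom on the basis requires that for any cover $D(f) = \bigcup_{i\in I} D(f_i)$, the diagram
\[
M_f \longrightarrow \prod_{i\in I} M_{f_i} \rightrightarrows \prod_{i,j \in I} M_{f_i + f_j}
\]
is an equalizer. Here Lemma~\ref{lem:sqc MSpec} trivialises the verification: $D(f)$ is strongly quasi-compact, so some $D(f_{i_0})$ already equals all of $D(f)$, whence the restriction $M_f \to M_{f_{i_0}}$ is an isomorphism and the equalizer condition is immediate. Extending to arbitrary opens by the standard procedure yields the desired sheaf $\shO_M$, and claims (1) and (2) then hold by construction.

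For claim (3), since the principal opens containing $\pp$ form a cofinal system among all opens containing $\pp$, and $\pp \in D(f) \iff f \in \pp^c$, the stalk equals the filtered colimit $\colim_{f \in \pp^c} M_f$. Because $\pp^c$ is a face by Fact~\ref{fct:bij face prime}, it is closed under addition, so this colimit inverts precisely the elements of $\pp^c$; by the universal property of localization it coincides with $M_{\pp^c}$. The strongly quasi-compact structure of $\MSpec M$ trivialises the sheaf axiom, so the only real content is the universal-property bookkeeping of the first two paragraphs — I do not foresee a serious obstacle.
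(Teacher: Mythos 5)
The paper does not prove this statement at all: it is imported as a Fact from \cite[Section II.1.2]{Og}, so there is no internal argument to compare against. Your construction is correct and is essentially the standard affine-scheme-style construction, streamlined by one genuinely useful observation: since $\gen{f}_{\face}^c$ is the maximum element of $D(f)$ (as in the proof of Lemma \ref{lem:sqc MSpec}), every basic open is the set of generalizations of a single point, and $M_f\iso M_{\gen{f}_{\face}}$ (Lemma \ref{lem:face loc 1}) is already the stalk at that point; this is exactly what collapses the gluing axiom. Two places where ``immediate'' hides a short but necessary line: (i) in the equalizer check, after producing $s\in M_f$ from $s_{i_0}$ via the isomorphism $M_f\iso M_{f_{i_0}}$, you must still verify $s|_{D(f_i)}=s_i$ for the remaining indices, which uses $D(f_{i_0}+f_i)=D(f_{i_0})\cap D(f_i)=D(f_i)$ together with the compatibility hypothesis on overlaps; (ii) in the stalk computation, the identification $\colim_{f\in\pp^c}M_f\iso M_{\pp^c}$ uses that $\pp^c$ is a face, hence a submonoid, so the index system is filtered (because $f+g\in\pp^c$) and $\gen{\pp^c}_{\bbN}=\pp^c$ in the explicit description of the localization from Definition \ref{def:mon-loc}. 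Both are one-line fixes, and the universal-property bookkeeping for well-definedness and the restriction maps is handled correctly, so the proposal is sound.
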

A monoidal space is a pair $(X,\shM)$ 
of a topological space $X$ and a sheaf $\shM$ of monoids on $X$.
A morphism $(f,f^{\flat})\colon (X,\shM) \to (Y,\shN)$ of monoidal spaces is
a pair of continuous map $f\colon X\to Y$ and 
a morphism $f^b\colon f^{-1}\shN \to \shM$ of sheaves of monoids
such that the map on the stalks $\shN_{f(x)} \to \shM_{x}$
are local monoid homomorphism for all $x\in X$.
Here a monoid morphism $\phi\colon M\to N$ is \emph{local}
if $\phi^{-1}(N^{\times})= M^{\times}$.
A morphism $(f,f^{\flat})\colon (X,\shM) \to (Y,\shN)$ is an isomorphism if and only if $f$ is a homeomorphism 
and $f^{\flat}$ is an isomorphism of sheaves.
An \emph{affine monoid scheme} is a monoidal space
isomorphic to $(\MSpec M, \shO_M)$ for some monoid $M$.

\begin{rmk}
An affine monoid scheme $(\MSpec M, \shO_{M})$ was first introduced by Kato \cite{Kato}
to study toric singularities.
Deitmar \cite{Deitmar} used it to construct a theory of 
``schemes over the field $\bbF_1$ with one element''.
See \cite{LP11} for more information.
\end{rmk}

The following lemmas will be used in the proof of Proposition \ref{prp:compare monoidal sp}.
\begin{lem}\label{lem:face loc 1}
Let $S$ be a subset of $M$.
Then the natural monoid homomorphism
\[
M_S \to M_{\la S \ra_{\face}},\quad
[x,s] \mapsto [x,s]
\]
is an isomorphism.
\end{lem}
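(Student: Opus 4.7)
The plan is to prove the isomorphism by invoking the universal property of localization (Definition \ref{def:mon-loc}) twice and producing mutually inverse maps between $M_S$ and $M_{\la S \ra_{\face}}$.

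First, I would observe that since $S \subseteq \la S \ra_{\face}$, the composition $M \to M_{\la S \ra_{\face}}$ inverts every element of $S$; hence the universal property of $M_S$ yields a canonical homomorphism $\Phi \colon M_S \to M_{\la S \ra_{\face}}$ given on classes by $[x,s] \mapsto [x,s]$, which is exactly the map in the statement.

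For the reverse direction, the key step is to check that the localization homomorphism $\rho \colon M \to M_S$ inverts every element of $\la S \ra_{\face}$, not merely every element of $S$. By Fact \ref{fct:description face}(2), any $x \in \la S \ra_{\face}$ admits some $y \in M$ with $x + y \in \la S \ra_{\bbN}$, i.e.\ $x + y = \sum_{i=1}^m n_i s_i$ for some $s_i \in S$ and $n_i \in \bbN$. In $M_S$ each $\rho(s_i)$ is a unit, hence so is $\rho(x) + \rho(y) = \sum n_i \rho(s_i)$. Now I would invoke the elementary fact that in a commutative monoid any summand of a unit is itself a unit (if $a+b+c = 0$, then $b+c$ is an inverse of $a$). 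Therefore $\rho(x)$ is invertible in $M_S$ for every $x \in \la S \ra_{\face}$. The universal property of $M_{\la S \ra_{\face}}$ then produces a unique monoid homomorphism $\Psi \colon M_{\la S \ra_{\face}} \to M_S$ with $\Psi \rho_{\face} = \rho$, where $\rho_{\face} \colon M \to M_{\la S \ra_{\face}}$ is the localization homomorphism.

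To conclude, I would verify that $\Phi$ and $\Psi$ are mutually inverse. Both compositions $\Phi \Psi$ and $\Psi \Phi$ are endomorphisms of a localization that restrict to the identity on the image of $M$, so by the uniqueness clause of the universal property of each localization they must equal the respective identity maps. I do not expect a serious obstacle here; the only subtle point is the observation that summands of units are units, and everything else is a direct application of universal properties.
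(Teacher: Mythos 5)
Your proof is correct and follows essentially the same route as the paper: both reduce the statement to the observation that the localization homomorphism $\rho\colon M\to M_S$ inverts every element of $\la S\ra_{\face}$, and then conclude via the universal properties of the two localizations. The only difference is that you verify this observation directly (using Fact \ref{fct:description face} and the remark that a summand of a unit is a unit), whereas the paper cites it from Ogus in the form of the equality $\rho^{-1}(M_S^{\times})=\la S\ra_{\face}$.
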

\begin{proof}
Let $\rho\colon M \to M_S$ be the localization homomorphism.
We have that $\rho^{-1}(M_S^{\times}) = \la S \ra_{\face}$ 
by \cite[The text following Proposition 1.4.4]{Og}.
Then the conclusion follows immediately from the universal property.
\end{proof}

\begin{lem}\label{lem:face loc 2}
Let $F$ be a face of $M$.
\begin{enua}
\item
$M/F$ is sharp (see Definition \ref{dfn:property mon} (1)).
\item
The monoid homomorphism
\[
\phi \colon M_F/M_F^{\times} \to M/F,\quad
[x,s] \bmod M_F^{\times} \mapsto x \bmod F
\]
is an isomorphism.
\end{enua}
\end{lem}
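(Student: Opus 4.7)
The plan is to dispatch part (1) by a direct application of the face property of $F$, and then attack part (2) by exhibiting an explicit two-sided inverse of $\phi$.

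For (1), suppose $a + b \equiv 0 \bmod F$ in $M/F$ for some $a, b \in M$. Unpacking the relation $\sim_F$ from \S\ref{ss:mon}, there exist $n, n' \in F$ with $a + b + n = n'$. Then $a + b + n \in F$, and since $F$ is a face this forces $a, b \in F$, i.e., $a \equiv b \equiv 0 \bmod F$. Hence $M/F$ is sharp.

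For (2), I will first check that $\phi$ is well-defined and annihilates $M_F^{\times}$, using the explicit construction of $M_F$ recalled in Definition \ref{def:mon-loc}. If $[x, s] = [y, t]$ in $M_F$, then $x + t + u = y + s + u$ for some $u \in F$, and since $s, t, u \in F$ this yields $x \equiv y \bmod F$. If moreover $[x, s]$ is a unit with inverse $[y, t]$, then $x + y + u \in F$ for some $u \in F$, and the face property gives $x \in F$, so $x \bmod F = 0$. Next I will define a candidate inverse $\psi \colon M/F \to M_F/M_F^{\times}$ by $x \bmod F \mapsto [x, 0] \bmod M_F^{\times}$. The key point is that each $[n, 0]$ with $n \in F$ is a unit in $M_F$ (with inverse $[0, n]$, since $[n,0]+[0,n]=[n,n]=[0,0]$), which is precisely the defining purpose of the localization. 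Well-definedness of $\psi$ then follows: if $x + n = y + n'$ with $n, n' \in F$, then $[x, 0] + [n, 0] = [y, 0] + [n', 0]$ in $M_F$, and since $[n, 0]$ and $[n', 0]$ are units of $M_F$, we obtain $[x, 0] \equiv [y, 0] \bmod M_F^{\times}$.

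It remains to verify that $\phi \psi$ and $\psi \phi$ are identity maps. The former is immediate from the formulas. For the latter, I will use that $[x, 0] = [x, s] + [s, 0]$ in $M_F$ together with $[s, 0] \in M_F^{\times}$, which gives $\psi \phi([x, s] \bmod M_F^{\times}) = [x, 0] \bmod M_F^{\times} = [x, s] \bmod M_F^{\times}$. I do not anticipate any genuine obstacle here; the only thing to be careful about is bookkeeping in the equivalence relations defining $M/F$ and $M_F$, where in each case the auxiliary slack element of $F$ is precisely what allows the face property of $F$ to be applied to conclude membership in $F$.
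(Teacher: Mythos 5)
Your proof is correct and takes essentially the same route as the paper's: part (1) is word-for-word the paper's argument, and in part (2) the key computations are the same (the elements $[n,0]$ with $n\in F$ are units of $M_F$, the face property applied to $x+y+u\in F$ shows units of $M_F$ die in $M/F$, and the identity $[x,0]=[x,s]+[s,0]$ modulo $M_F^{\times}$ drives the bijectivity). The only cosmetic difference is packaging: the paper obtains $\phi$ from the universal properties of $M_F$ and of the quotient by $M_F^{\times}$ (using part (1) to see that $\phi'(M_F^{\times})=0$) and then checks surjectivity and injectivity, whereas you verify well-definedness by hand and exhibit an explicit two-sided inverse $\psi$.
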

\begin{proof}
(1)
Let $x,y\in M$ such that $x+y \equiv 0 \bmod F$.
There are elements $s,t \in F$ such that $x+y+s=t$ in $M$.
Since $F$ is a face, both $x$ and $y$ belong to $F$,
which implies both $x \equiv 0 \bmod F$ and $y \equiv 0 \bmod F$.
Therefore $M/F$ is sharp.

(2)
The quotient homomorphism $M\to M/F$ induces 
a monoid homomorphism $\phi' \colon M_F \to M/F$ by the universal property of $M_F$.
Then $\phi'(M_F^{\times})=0$ since $M/F$ is sharp.
Thus $\phi'$ induces a monoid homomorphism $\phi\colon M_F/M_F^{\times} \to M/F$
by the universal property of $M_F/M_F^{\times}$.
The homomorphism $\phi$ is clearly surjective.
We prove that $\phi$ is injective.
Let $[x,s], [y,t]\in M_F$ such that $x\equiv y \bmod F$.
Then there are $n,n' \in F$ such that $x+n=y+n'$ in $M$.
Hence we have the following equalities in $M_F$:
\[
[x,s]+[s+n,0]=[x+s+n,s]=[x+n,0]=[y+n',0]=[y,t]+[t+n',0].
\]
We conclude that $[x,s]\equiv [y,t] \bmod M_F^{\times}$
because $[s+n,0], [t+n',0] \in M_F^{\times}$.
Therefore $\phi$ is injective.
\end{proof}

\subsection{The spectrum of the Grothendieck monoid}\label{ss:spec Gro}
In this subsection, $\catE$ is an exact category.
We first introduce a topology on the set $\Serre(\catE)$ of Serre subcategories
and study the relationship between the topologies on $\Serre(\catE)$ and $\MSpec \M(\catE)$.
Next, we classify finitely generated Serre subcategories by using this topology.
Finally, we introduce a sheaf $\shM$ of monoids on $\Serre(\catA)$ for an abelian category $\catA$,
which is related to the quotient abelian category $\catA/\catS$,
and compare it with the structure sheaf $\shO_{\M(\catA)}$ of $\MSpec \M(\catA)$.

Let us begin with the bijections which follow from
Corollary \ref{cor:Serre bij} and  Fact \ref{fct:bij face prime}.
\begin{prp}\label{prp:Serre face prime}
There are bijections between the following sets:
\begin{enua}
\item
The set $\Serre (\catE)$ of Serre subcategories of $\catE$.
\item
The set $\Face \M(\catE)$ of faces of $\M(\catE)$.
\item
The set $\MSpec \M(\catE)$ of prime ideals of $\M(\catE)$.
\end{enua}
Moreover,
the bijection between (1) and (2) is inclusion-preserving
while the one between (2) and (3) is inclusion-reversing.
\end{prp}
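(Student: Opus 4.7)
The statement is essentially a packaging of two bijections already established earlier in the excerpt, so the plan is to assemble the proof from those results without introducing new constructions. The bijection between (1) and (2) is exactly the content of Corollary \ref{cor:Serre bij}: the maps $\catS \mapsto \M_{\catS}$ and $F \mapsto \catD_F$ are mutually inverse inclusion-preserving bijections, where $\M_{\catS} = \{[X] \in \M(\catE) \mid X \in \catS\}$ and $\catD_F = \{X \in \catE \mid [X] \in F\}$. Nothing further is needed here; I would simply invoke the corollary.

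For the bijection between (2) and (3), I would invoke Fact \ref{fct:bij face prime}, which provides the mutually inverse assignments $F \mapsto F^c = \M(\catE) \setminus F$ and $\pp \mapsto \pp^c = \M(\catE) \setminus \pp$ between $\Face(\M(\catE))$ and $\MSpec \M(\catE)$. Since taking complements is evidently inclusion-reversing, this part of the statement is immediate.

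Composing these two bijections gives a bijection between $\Serre(\catE)$ and $\MSpec \M(\catE)$, explicitly described by $\catS \mapsto (\M_{\catS})^c$ and $\pp \mapsto \catD_{\pp^c}$. The monotonicity claim follows at once: the map in Corollary \ref{cor:Serre bij} is inclusion-preserving, and the map in Fact \ref{fct:bij face prime} is inclusion-reversing, so the statement follows directly.

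Since both ingredients are already proved, there is no real obstacle, and the entire argument amounts to one or two lines citing Corollary \ref{cor:Serre bij} and Fact \ref{fct:bij face prime}. The only thing worth flagging is the direction of monotonicity: it may be worth spelling out explicitly that the Serre-to-face correspondence preserves inclusions while the face-to-prime correspondence reverses them, so the composite is inclusion-reversing as a bijection $\Serre(\catE) \iso \MSpec \M(\catE)$.
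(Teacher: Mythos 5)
Your proposal is correct and matches the paper exactly: the proposition is introduced there with the sentence that it follows from Corollary \ref{cor:Serre bij} and Fact \ref{fct:bij face prime}, which is precisely the two-step assembly you describe, including the observation that the first correspondence preserves inclusions while the complementation in the second reverses them.
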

The bijection between (1) and (3) induces a topology on $\Serre (\catE)$ from $\MSpec \M(\catE)$.
In the following, we describe this topology explicitly.
For a subcategory $\catX$ of $\catE$,
we set
\[
V(\catX):=\{\catS \in \Serre(\catE) \mid \catS \cap \catX = \emptyset \}.
\]
We can easily check that the following equalities hold:
\begin{itemize}
\item 
$V(\catE)=\emptyset$ and $V(\emptyset)=\Serre(\catE)$.
\item
$\bigcap_{\alpha \in A}V(\catX_{\alpha})=V(\bigcup_{\alpha \in A} \catX_{\alpha})$
for a family $\{\catX_{\alpha}\}_{\alpha\in A}$ of subcategories of $\catE$.
\item
$V(\catX) \cup V(\catY)=V(\catX \oplus \catY)$
for subcategories $\catX$, $\catY$ of $\catE$,
where $\catX \oplus \catY :=\{X\oplus Y \mid X\in \catX, Y\in \catY \}$.
\end{itemize}
Thus we can define a topology on $\Serre(\catE)$,
which is called the \emph{Zariski topology},
by taking the subsets of the form $V(\catX)$ to be the closed subsets. 

For an object $X\in \catE$, we put
\[
U_X:=\{\catS \in \Serre (\catE) \mid X \in \catS \}.
\]
We can easily check that $U_X \cap U_Y = U_{X\oplus Y}$ for any $X, Y \in \catE$.

We now compare the Zariski topology on $\MSpec\M(\catE)$ with the one on $\Serre(\catE)$.
\begin{prp}\label{prp:top on Serre}
The following hold.
\begin{enua}
\item
The bijection $\Phi\colon \Serre(\catE) \isoto \MSpec \M(\catE)$ 
in Proposition \ref{prp:Serre face prime} is a homeomorphism.
\item
The set of subsets of the form $U_X$ forms an open basis of $\Serre(\catE)$.
\item
$\Serre(\catE)_{\spcl} \iso (\Serre(\catE),\subseteq)$ as posets 
(see Definition \ref{dfn:spcl order}).
\end{enua}
\end{prp}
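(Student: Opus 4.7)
The bijection $\Phi \colon \Serre(\catE) \isoto \MSpec \M(\catE)$ is the composition of the Serre--face correspondence $\catS \mapsto \M_{\catS}$ from Corollary \ref{cor:Serre bij} with the face--prime complementation $F \mapsto F^c$ from Fact \ref{fct:bij face prime}; explicitly, $\Phi(\catS) = \M(\catE) \setminus \M_{\catS}$. My plan is to prove (1) by matching closed subsets on both sides under $\Phi$, then deduce (2) and (3) by transporting the known descriptions of basic opens and the specialization order on $\MSpec \M(\catE)$ (Lemma \ref{lem:sqc MSpec} and Proposition \ref{prp:spcl MSpec}).

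\textbf{Proof of (1).} For a subcategory $\catX \subseteq \catE$, write $[\catX] := \{[X] \mid X \in \catX\} \subseteq \M(\catE)$. The key identity to verify is $\Phi(V(\catX)) = V([\catX])$. Unwinding, $\Phi(\catS) \in V([\catX])$ means $[X] \in \Phi(\catS)$ for every $X \in \catX$, i.e.\ $[X] \notin \M_{\catS}$ for every $X \in \catX$. Because Serre subcategories are S-closed (Lemma \ref{lem:Serre S-closed}), the condition $[X] \in \M_{\catS}$ is equivalent to $X \in \catS$, so the displayed condition is just $\catS \cap \catX = \emptyset$, i.e.\ $\catS \in V(\catX)$. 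Conversely, any subset $S \subseteq \M(\catE)$ equals $[\catX]$ for $\catX := \{X \in \catE \mid [X] \in S\}$, since every element of $\M(\catE)$ is of the form $[X]$. Hence closed sets on the two sides correspond, and $\Phi$ is a homeomorphism.

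\textbf{Proof of (2) and (3).} For (2), we have $U_X = \Serre(\catE) \setminus V(\{X\})$, so $U_X$ is open, and under $\Phi$ it corresponds to the principal open $D([X])$. Since every $f \in \M(\catE)$ is of the form $[X]$ and the subsets $D(f)$ form a basis of $\MSpec \M(\catE)$ (see \S\ref{ss:spec mon}), the $U_X$ form a basis of $\Serre(\catE)$. For (3), Proposition \ref{prp:spcl MSpec}(2) identifies $\MSpec \M(\catE)_{\spcl}$ with $(\MSpec \M(\catE), \supseteq)$, and the bijection between faces and prime ideals is inclusion-reversing (Fact \ref{fct:bij face prime}), while the bijection between Serre subcategories and faces is inclusion-preserving (Corollary \ref{cor:Serre bij}). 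Composing, $\Phi$ is inclusion-reversing, so $\Serre(\catE)_{\spcl} \iso (\Serre(\catE), \subseteq)$ as posets.

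\textbf{Main obstacle.} The proof is essentially a dictionary-translation, so there is no hard step; the only point requiring care is that $[X] \in \M_{\catS} \equi X \in \catS$, which relies crucially on the S-closedness of Serre subcategories (Lemma \ref{lem:Serre S-closed}). Without this the bijection between Serre subcategories and subsets of $\M(\catE)$ would fail, and the matching of closed sets would break down.
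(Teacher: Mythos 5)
Your proposal is correct and follows essentially the same route as the paper: both prove (1) by checking $\Phi(V(\catX))=V(\M_{\catX})$ via the S-closedness of Serre subcategories (Lemma \ref{lem:Serre S-closed}) together with the surjectivity of $\catX\mapsto\M_{\catX}$ onto subsets of $\M(\catE)$, and both deduce (2) from $\Phi(U_X)=D([X])$ and (3) from Proposition \ref{prp:spcl MSpec} and the fact that $\Phi$ is inclusion-reversing. The only cosmetic difference is that you justify surjectivity by noting every element of $\M(\catE)$ has the form $[X]$, whereas the paper cites Proposition \ref{prp:S-closed bij}.
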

\begin{proof}
We first note that $\Phi(\catS)=\M_{\catS}^c:=\M(\catE)\setminus \M_{\catS}$
for any $\catS \in \Serre(\catE)$.

(1)
Let $\catX$ be a subcategory of $\catE$,
and let $\catS$ be a Serre subcategory of $\catE$.
Then $\catS \cap \catX =\emptyset$ if and only if $\M_{\catS} \cap \M_{\catX}=\emptyset$
since $\catS$ is S-closed by Lemma \ref{lem:Serre S-closed}.
It is equivalent to $\Phi(\catS)=\M_{\catS}^c \supseteq \M_{\catX}$.
Thus we obtain $\Phi(V(\catX))=V(\M_{\catX})$,
which implies $\Phi$ is a homeomorphism 
since $\M_{\catX}$ runs through all subsets of $\M(\catE)$ by Proposition \ref{prp:S-closed bij}.

(2)
It is clear since $\Phi(U_X)=D([X])$ for any $X\in \catE$.

(3)
It follows from Proposition \ref{prp:spcl MSpec} and the fact that $\Phi$ is inclusion-reversing.
\end{proof}

\begin{rmk}
The topology on $\Serre(\catE)$ is a natural analogy 
of the topology on the set of thick subcategories of a triangulated category,
which is introduced by Balmer \cite{Balmer} (see also \cite{Matsui-Takahashi, Matsui}).
\end{rmk}

Next, we will classify finitely generated Serre subcategories of $\catE$
by using the Zariski topology on $\Serre (\catE)$.
Recall that a Serre subcategory $\catS$ of $\catE$ is finitely generated 
if $\catS=\gen{X}_{\Serre}$ for some object $X\in \catE$.
We need two lemmas for the open subsets $U_X$ of $\Serre(\catE)$.
\begin{lem}\label{lem:sqc in Serre}
Let $U$ be an open subset of $\Serre(\catE)$.
Then $U$ is strongly quasi-compact
if and only if $U=U_X$ for some $X\in \catE$.
\end{lem}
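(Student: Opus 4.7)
The plan is to transport the question through the homeomorphism established in Proposition \ref{prp:top on Serre}(1), reducing it to the already-proved Lemma \ref{lem:sqc MSpec}. Strong quasi-compactness is a purely topological property, so it is preserved and reflected by any homeomorphism; hence $U \subseteq \Serre(\catE)$ is strongly quasi-compact if and only if its image $\Phi(U) \subseteq \MSpec \M(\catE)$ is.

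Next, I would match up the two bases of opens. On one side, Proposition \ref{prp:top on Serre}(2) says the $U_X$ form an open basis of $\Serre(\catE)$, and from the proof of that proposition we have $\Phi(U_X) = D([X])$ for every $X \in \catE$. On the other side, Lemma \ref{lem:sqc MSpec} characterizes the strongly quasi-compact opens of $\MSpec \M(\catE)$ as precisely those of the form $D(f)$ with $f \in \M(\catE)$. To close the loop, I need to know that every element of $\M(\catE)$ has the form $[X]$ for a single object $X \in \catE$; this was noted in Remark \ref{rmk:compare grp mon}(3), and it is the crucial (but mild) step that lets $\{U_X\}_{X\in\catE}$ mirror $\{D(f)\}_{f\in \M(\catE)}$ under $\Phi$.

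Putting these pieces together: if $U = U_X$, then $\Phi(U) = D([X])$ is strongly quasi-compact by Lemma \ref{lem:sqc MSpec}, hence so is $U$. Conversely, if $U$ is strongly quasi-compact, then $\Phi(U) = D(f)$ for some $f \in \M(\catE)$ by Lemma \ref{lem:sqc MSpec}; writing $f = [X]$ gives $\Phi(U) = D([X]) = \Phi(U_X)$, and injectivity of $\Phi$ yields $U = U_X$.

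I do not foresee a real obstacle here: the one substantive ingredient beyond bookkeeping is the surjection $\abs{\catE} \twoheadrightarrow \M(\catE)$, $X \mapsto [X]$, which is built into the definition of the Grothendieck monoid. Everything else is a direct translation via $\Phi$.
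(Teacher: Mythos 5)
Your proof is correct and follows exactly the paper's argument: transport via the homeomorphism $\Phi$ from Proposition \ref{prp:top on Serre}, invoke Lemma \ref{lem:sqc MSpec}, and use $\Phi(U_X)=D([X])$ together with the surjectivity of $X\mapsto[X]$. You have merely spelled out the last point, which the paper leaves implicit.
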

\begin{proof}
Let $\Phi\colon \Serre(\catE) \isoto \MSpec \M(\catE)$ be the homeomorphism 
in Proposition \ref{prp:top on Serre}.
This lemma immediately follows from Lemma \ref{lem:sqc MSpec} and $\Phi(U_X)=D([X])$.
\end{proof}

\begin{lem}\label{lem:sqc fg Serre}
Let $X$ and $Y$ be objects of $\catE$.
Then $U_X \subseteq U_Y$ if and only if $\gen{X}_{\Serre} \supseteq \gen{Y}_{\Serre}$.
In particular, $U_X = U_Y$ if and only if $\gen{X}_{\Serre} = \gen{Y}_{\Serre}$.
\end{lem}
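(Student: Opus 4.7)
The plan is to prove both directions directly using the universal property of the Serre subcategory $\gen{-}_{\Serre}$ generated by an object, with no detour through the Grothendieck monoid or the homeomorphism with $\MSpec\M(\catE)$.

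For the forward implication, suppose $U_X \subseteq U_Y$. The key observation is that $\gen{X}_{\Serre}$ is itself a point of $U_X$, since by definition $X \in \gen{X}_{\Serre}$. By hypothesis it therefore lies in $U_Y$, which means $Y \in \gen{X}_{\Serre}$. Since $\gen{Y}_{\Serre}$ is the smallest Serre subcategory containing $Y$, this forces $\gen{Y}_{\Serre} \subseteq \gen{X}_{\Serre}$.

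For the reverse implication, suppose $\gen{Y}_{\Serre} \subseteq \gen{X}_{\Serre}$, and take any $\catS \in U_X$. Then $\catS$ is a Serre subcategory containing $X$, so by minimality $\gen{X}_{\Serre} \subseteq \catS$. Combining with the hypothesis yields $Y \in \gen{Y}_{\Serre} \subseteq \gen{X}_{\Serre} \subseteq \catS$, hence $\catS \in U_Y$. Therefore $U_X \subseteq U_Y$.

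The "in particular" clause is immediate by applying the equivalence with $X$ and $Y$ interchanged and intersecting the two conclusions. There is essentially no obstacle here: the whole argument rests only on the definition of $U_X$ and the universal (minimality) property of $\gen{-}_{\Serre}$, so it is a pure tautology once those definitions are unfolded.
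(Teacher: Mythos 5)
Your proof is correct and is essentially identical to the paper's own argument: both directions use exactly the same observations, namely that $\gen{X}_{\Serre}\in U_X$ forces $Y\in\gen{X}_{\Serre}$ in one direction, and minimality of $\gen{X}_{\Serre}$ inside any $\catS\in U_X$ in the other. No differences worth noting.
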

\begin{proof}
Suppose that $U_X\subseteq U_Y$.
Then $\gen{X}_{\Serre} \in U_X\subseteq U_Y$, which implies $Y\in \gen{X}_{\Serre}$.
Thus we have that $\gen{Y}_{\Serre} \subseteq \gen{X}_{\Serre}$.
Conversely, suppose that $\gen{X}_{\Serre}\supseteq\gen{Y}_{\Serre}$.
Take $\catS \in U_X$.
Then $X\in \catS$, which implies $Y\in \gen{Y}_{\Serre} \subseteq \gen{X}_{\Serre} \subseteq \catS$,
and hence $\catS \in U_Y$.
Thus we have that $U_X \subseteq U_Y$.
\end{proof}

\begin{prp}\label{cor:char fg Serre}
There are bijections between the following sets:
\begin{enua}
\item
The set of finitely generated Serre subcategories of $\catE$.
\item
The set of strongly quasi-compact open subsets of $\Serre (\catE)$.
\item
The set of strongly quasi-compact open subsets of $\MSpec \M(\catE)$.
\end{enua}
The bijection from (1) to (2) is given by $\catX=\gen{X}_{\Serre} \mapsto U_X$.
\end{prp}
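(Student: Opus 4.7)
The plan is to use the two preceding lemmas (Lemma \ref{lem:sqc in Serre} and Lemma \ref{lem:sqc fg Serre}) as the main tools, together with the homeomorphism $\Phi \colon \Serre(\catE) \isoto \MSpec \M(\catE)$ established in Proposition \ref{prp:top on Serre}(1).

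First, the bijection between (2) and (3) is essentially free: since $\Phi$ is a homeomorphism and strong quasi-compactness is a topological property, $U \mapsto \Phi(U)$ restricts to a bijection between strongly quasi-compact open subsets of $\Serre(\catE)$ and those of $\MSpec \M(\catE)$.

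Next, to construct the bijection between (1) and (2), I would define the map $\Psi$ from finitely generated Serre subcategories to strongly quasi-compact opens by $\gen{X}_{\Serre} \mapsto U_X$. This is well-defined on the set of finitely generated Serre subcategories: if $\gen{X}_{\Serre} = \gen{Y}_{\Serre}$, then Lemma \ref{lem:sqc fg Serre} gives $U_X = U_Y$, so the image does not depend on the choice of generator. Injectivity is the other direction of the same lemma: if $U_X = U_Y$, then $\gen{X}_{\Serre} = \gen{Y}_{\Serre}$. Surjectivity follows from Lemma \ref{lem:sqc in Serre}: any strongly quasi-compact open subset of $\Serre(\catE)$ has the form $U_X$ for some $X \in \catE$, and this is exactly the image of the finitely generated Serre subcategory $\gen{X}_{\Serre}$.

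Since all three parts reduce directly to the cited lemmas and Proposition \ref{prp:top on Serre}, there is no real obstacle; the proof is a short assembly. The only mild subtlety is confirming that the map $\gen{X}_{\Serre} \mapsto U_X$ is truly well-defined as a function on finitely generated Serre subcategories (rather than on objects), but this is precisely the content of the ``in particular'' clause of Lemma \ref{lem:sqc fg Serre}.
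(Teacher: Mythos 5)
Your proof is correct and follows exactly the same route as the paper: the bijection between (2) and (3) comes from the homeomorphism $\Phi$ of Proposition \ref{prp:top on Serre}, well-definedness and injectivity of $\gen{X}_{\Serre} \mapsto U_X$ come from Lemma \ref{lem:sqc fg Serre}, and surjectivity comes from Lemma \ref{lem:sqc in Serre}. Nothing is missing.
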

\begin{proof}
Let $\Phi\colon \Serre(\catE) \isoto \MSpec \M(\catE)$ be the homeomorphism 
in Proposition \ref{prp:top on Serre}.
It is clear that there is a bijection between (2) and (3) induced by $\Phi$.
Let us construct a bijection between (1) and (2).
For any $X,Y\in\catE$,
$U_X = U_Y$ if and only if $\gen{X}_{\Serre} = \gen{Y}_{\Serre}$ by Lemma \ref{lem:sqc fg Serre}.
Thus the assignment $\catX=\gen{X}_{\Serre} \mapsto U_X$ is well-defined and injective.
On the other hand, it is surjective by Lemma \ref{lem:sqc in Serre}.
Therefore the assignment $\catX=\gen{X}_{\Serre} \mapsto U_X$
gives a bijection from (1) to (2).
\end{proof}

Finally, we construct a sheaf of monoids on $\Serre(\catA)$ for an abelian category $\catA$,
which is related to the quotient abelian category $\catA/\catS$.
There is no application of this sheaf at the moment.
However, it may be interesting from the viewpoints of geometry
over the field $\bbF_1$ with one element and noncommutative algebraic geometry.
Even if the reader skips the rest of this subsection,
there is no problem to read the next subsection.

We begin with a review of the notion of abelian quotient categories.
See \cite[Section 4.3]{Popescu} for details.
For a Serre subcategory $\catS$ of $\catA$,
there are an abelian category $\catA/\catS$ and 
an exact functor $Q\colon \catA \to \catA/\catS$ which satisfy the following universal property:
\begin{itemize}
\item
For any exact functor $F\colon \catA \to \catC$ of abelian categories such that $F(\catS)=0$,
there exists a unique exact functor $\ol{F} \colon \catA/\catS \to \catC$
satisfying $F=\ol{F} Q$.
\end{itemize}
We call $\catA/\catS$ the \emph{abelian quotient category} of $\catA$ with respect to $\catS$,
and $Q\colon \catA \to \catA/\catS$ the \emph{quotient functor}.
The following facts are useful to study the abelian quotient category $\catA/\catS$.
\begin{fct}[{\cite[Lemma 4.3.4, 4.3.7, 4.3.9]{Popescu}}]\label{fct:Serre quot}
Let $\catS$ be a Serre subcategory of an abelian category $\catA$,
and let $Q\colon \catA \to \catA/\catS$ be the quotient functor.
\begin{enua}
\item
$\catS = \{X\in\catA \mid Q(X)=0\}$ holds.
\item
Let $f\colon X\to Y$ be a morphism in $\catA$.
\begin{itemize}
\item
$Q(f)$ is a monomorphism in $\catA/\catS$
if and only if $\Ker(f)\in\catS$.
\item
$Q(f)$ is an epimorphism in $\catA/\catS$
if and only if $\Cok(f)\in\catS$.
\item
$Q(f)$ is an isomorphism in $\catA/\catS$
if and only if $\Ker(f), \Cok(f)\in\catS$.
\end{itemize}
%
\item
Any morphism of $\catA/\catS$ can be written by $Q(s)^{-1}Q(f)Q(t)^{-1}$
for some morphisms $s,t, f$ in $\catA$.
\end{enua}
\end{fct}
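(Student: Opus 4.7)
The plan is to realize $\catA/\catS$ explicitly as the localization $\catA[W^{-1}]$ of $\catA$ with respect to the class
\[
W := \{f \in \Mor(\catA) \mid \Ker f, \Cok f \in \catS\},
\]
and then read off (1), (2), (3) from this description. First I would verify that $W$ admits a two-sided calculus of fractions in the sense of Gabriel--Zisman: it contains identities, it is closed under composition (applying the snake lemma to a composition $gf$ with $f,g\in W$ produces short exact sequences expressing $\Ker(gf)$ and $\Cok(gf)$ as extensions of objects in $\catS$, which lie in $\catS$ since $\catS$ is Serre), and the Ore and cancellation conditions follow by forming pullbacks/pushouts in $\catA$ and noting that the relevant kernels/cokernels are isomorphic to those of the given map, hence remain in $\catS$.

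Next I would show that $\catA[W^{-1}]$ carries an abelian structure and that the canonical functor $Q\colon \catA \to \catA[W^{-1}]$ is exact; kernels and cokernels in the localization are computed from those in $\catA$ after representing morphisms by roofs, and the Serre hypothesis on $\catS$ is exactly what is needed to guarantee well-definedness. The universal property of $\catA/\catS$ then identifies it with $\catA[W^{-1}]$: any exact $F\colon \catA \to \catC$ with $F(\catS)=0$ sends every $w\in W$ to an isomorphism, since applying $F$ to $0\to \Ker w \to \bullet \to \Ima w \to 0$ and $0\to \Ima w \to \bullet \to \Cok w \to 0$ forces $F(w)$ to be mono and epi; hence $F$ factors uniquely through the localization.

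Given this identification, the three assertions are immediate. For (1), $Q(X)=0$ is equivalent to the unique map $X\to 0$ being inverted by $Q$, i.e.\ lying in $W$; since $\Ker(X\to 0)=X$ and $\Cok(X\to 0)=0$, this says precisely $X\in\catS$. For (2), exactness of $Q$ gives $Q(\Ker f)=\Ker Q(f)$ and $Q(\Cok f)=\Cok Q(f)$, so $Q(f)$ is mono (resp.\ epi, iso) iff $\Ker f$ (resp.\ $\Cok f$, both) lies in the kernel of $Q$, which by (1) is $\catS$. For (3), this is the standard roof description of morphisms in a category of fractions: any morphism in $\catA[W^{-1}]$ is represented by $X \xleftarrow{t} X' \xrightarrow{f} Y' \xleftarrow{s} Y$ with $s,t\in W$, giving $Q(s)^{-1}Q(f)Q(t)^{-1}$.

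The main obstacle is the middle step: checking that $\catA[W^{-1}]$ is genuinely abelian with $Q$ exact, rather than merely additive. This requires careful calculation with representatives (roofs) and essential use of the Serre closure property under subobjects, quotients, and extensions to control that kernels and cokernels constructed in the localization are independent of representative and behave exactly; everything else, including (1)--(3), is then formal from the general theory of categories of fractions.
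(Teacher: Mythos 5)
The paper does not prove this statement at all: it is quoted as a Fact with a citation to Popescu's book, where the quotient $\catA/\catS$ is constructed directly by setting $\Hom_{\catA/\catS}(X,Y)$ to be a filtered colimit of groups $\Hom_{\catA}(X',Y/Y')$ over subobjects $X'\le X$ with $X/X'\in\catS$ and $Y'\le Y$ with $Y'\in\catS$; properties (1)--(3) are then read off from that explicit model. Your route --- realizing $\catA/\catS$ as the Gabriel--Zisman localization $\catA[W^{-1}]$ at $W=\{f\mid \Ker f,\Cok f\in\catS\}$ --- is a standard and correct alternative, and your verifications that $W$ is closed under composition (snake lemma) and satisfies the Ore and cancellation conditions (pushouts/pullbacks preserve the relevant kernels and cokernels up to sub/quotient, and $\catS$ is closed under these) are sound. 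Two caveats. First, you explicitly defer the hardest point, namely that $\catA[W^{-1}]$ is abelian with $Q$ exact; everything in (1) and (2) rests on this, so as written the proposal is a program whose technical core is acknowledged rather than carried out --- this is precisely the content that Popescu's direct construction delivers for free. Second, your derivation of (1) says ``$Q(X)=0$ iff $X\to 0$ is inverted by $Q$, i.e.\ lies in $W$''; the step ``inverted by $Q$ implies in $W$'' is the saturation of $W$, which for a general calculus of fractions is not automatic and here is essentially the third bullet of (2). It does hold, but needs the explicit argument: from $Q(\mathrm{id}_X)=Q(0)$ the calculus of fractions produces $s\colon X\to X''$ in $W$ with $s=0$, whence $X=\Ker(s)\in\catS$. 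With these two points filled in, your approach proves the Fact; what Popescu's construction buys is that abelianness and exactness of $Q$ come with the definition, whereas what the localization picture buys is that (3), the double-roof presentation $Q(s)^{-1}Q(f)Q(t)^{-1}$, is immediate.
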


Let us construct a sheaf of monoids on $\Serre(\catA)$.
Let $\calB$ be the set of strongly quasi-compact open subsets of $\Serre(\catA)$.
Explicitly, we have that $\calB=\{U_X \mid X\in\catA\}$ by Lemma \ref{lem:sqc in Serre}.
Then $\calB$ is an open basis of $\Serre(\catA)$ by Proposition \ref{prp:top on Serre}. 
Note that 
$U_X \supseteq U_Y$ if and only if $\gen{X}_{\Serre} \subseteq \gen{Y}_{\Serre}$
for any $X,Y\in\catA$ by Lemma \ref{lem:sqc fg Serre}.
In this case,
there is an exact functor $F_{X,Y} \colon \catA/\gen{X}_{\Serre} \to \catA/\gen{Y}_{\Serre}$
induced by the universal property of the abelian quotient category $\catA/\gen{X}_{\Serre}$.
In particular, we obtain a monoid homomorphism 
$r_{X,Y}:=\M(F_{X,Y}) \colon \M(\catA/\gen{X}_{\Serre}) \to \M(\catA/\gen{Y}_{\Serre})$.
Thus the assignment
\[
U_X \mapsto \shM_{\catA}(U_X):=\M(\catA/\gen{X}_{\Serre})
\]
defines a presheaf of monoids on $\calB$.
Define a presheaf $\shM_{\catA}$ on $\Serre(\catA)$ by
\[
V \mapsto \shM_{\catA}(V):=\plim_{U}\shM_{\catA}(U),
\]
where $U$ runs through the set of $U\in\calB$ with $U \subseteq V$.
Then it satisfies the condition of \cite[Proposition 2.20]{GW} 
since $U\in\calB$ is strongly quasi-compact.
Thus $\shM_{\catA}$ is a sheaf on  $\Serre(\catA)$.
We need the following lemma to study this sheaf $\shM_{\catA}$.
\begin{lem}\label{lem:sh on Serre}
Let $\catS$ be a Serre subcategory of an abelian category $\catA$,
and let $X, Y \in \catA$.
\begin{enua}
\item
If $X$ is a subobject of $Y$ in $\catA/\catS$,
then there is $M\in \catS$ such that $X$ remains a subobject of $Y$ in $\catA/\gen{M}_{\Serre}$.
\item
If $Y$ is a quotient of $X$ in $\catA/\catS$,
then there is $M\in \catS$ such that $Y$ remains a quotient of $X$ in $\catA/\gen{M}_{\Serre}$.
\item
If $X\iso Y$ in $\catA/\catS$,
then there is $M\in \catS$ such that $X\iso Y$ in $\catA/\gen{M}_{\Serre}$.
\end{enua}
\end{lem}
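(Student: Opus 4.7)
The strategy is to use Fact~\ref{fct:Serre quot}(3) to represent the relevant morphism in $\catA/\catS$ by a zig-zag of morphisms in $\catA$, then to collect the finitely many kernels and cokernels witnessing the mono/epi/iso condition into a single object $M \in \catS$, and finally to observe that the same zig-zag continues to represent a mono/epi/iso when working modulo $\gen{M}_{\Serre}$ rather than $\catS$. Since all three parts are about the same kind of statement (mono, epi, iso of a single morphism in $\catA/\catS$), I would treat them uniformly and only distinguish the three cases at the bundling step.

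Concretely, write $Q\colon \catA \to \catA/\catS$ for the quotient functor and $g\colon X \to Y$ for the given morphism in $\catA/\catS$. By Fact~\ref{fct:Serre quot}(3) we may express $g = Q(s)^{-1}Q(f)Q(t)^{-1}$ for some $t\colon A \to X$, $f\colon A \to B$, $s\colon Y \to B$ in $\catA$ with $Q(s)$ and $Q(t)$ isomorphisms; by Fact~\ref{fct:Serre quot}(2) this says $\Ker(t), \Cok(t), \Ker(s), \Cok(s) \in \catS$. Because $Q(s)^{-1}$ and $Q(t)^{-1}$ are isomorphisms, $g$ is a monomorphism (resp.\ epimorphism, isomorphism) if and only if $Q(f)$ is, and Fact~\ref{fct:Serre quot}(2) translates this to $\Ker(f) \in \catS$ (resp.\ $\Cok(f) \in \catS$, resp.\ both).

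I would then set
\[
M := \Ker(t) \oplus \Cok(t) \oplus \Ker(s) \oplus \Cok(s) \oplus K,
\]
where $K = \Ker(f)$ for (1), $K = \Cok(f)$ for (2), and $K = \Ker(f) \oplus \Cok(f)$ for (3). Then $M \in \catS$, and since Serre subcategories are closed under direct summands by Lemma~\ref{lem:Serre S-closed}, every direct summand of $M$ lies in $\gen{M}_{\Serre}$. Writing $Q'\colon \catA \to \catA/\gen{M}_{\Serre}$ for the new quotient functor, Fact~\ref{fct:Serre quot}(2) applied inside $\gen{M}_{\Serre}$ shows that $Q'(s)$ and $Q'(t)$ are again isomorphisms and that $Q'(f)$ retains the appropriate mono/epi/iso property. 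Consequently $Q'(s)^{-1}Q'(f)Q'(t)^{-1}\colon X \to Y$ exhibits $X$ as a subobject of $Y$ (resp.\ $Y$ as a quotient of $X$, resp.\ $X \iso Y$) in $\catA/\gen{M}_{\Serre}$.

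I do not anticipate a serious obstacle; the argument is essentially bookkeeping with the calculus of fractions in a Gabriel quotient. The only subtlety worth flagging is the bundling step: because Fact~\ref{fct:Serre quot}(3) produces a \emph{three-step} zig-zag rather than a single roof, one must remember to include both the kernel and the cokernel of each of $s$ and $t$ in $M$. Once this finite list of obstructions is assembled into a single object of $\catS$, the fact that Serre subcategories are closed under direct summands immediately reduces the problem to the $\gen{M}_{\Serre}$-quotient and finishes all three cases in parallel.
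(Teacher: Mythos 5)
Your proposal is correct and follows essentially the same route as the paper: represent the morphism as $Q(s)^{-1}Q(f)Q(t)^{-1}$ via Fact \ref{fct:Serre quot}(3), bundle the relevant kernels and cokernels into a single $M\in\catS$, and observe the zig-zag retains the mono/epi/iso property in $\catA/\gen{M}_{\Serre}$. The only (harmless) difference is that you treat all three cases uniformly by varying the summand $K$, whereas the paper proves (1), notes (2) is similar, and deduces (3) from (1) and (2).
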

\begin{proof}
The proof of (2) is similar to that of (1), and (3) is a consequence of (1) and (2).
Thus we only prove (1).
Any monomorphism $X \inj Y$ in $\catA/\catS$ can be written as $Q(s)^{-1}Q(f)Q(t)^{-1}$
for some morphisms $s,t, f$ in $\catA$ by Fact \ref{fct:Serre quot} (3).
We set
\[
M:=\Ker(s)\oplus \Ker(t) \oplus \Cok(s)\oplus \Cok(t) \oplus \Ker(f).
\]
Then $M\in \catS$ by Fact \ref{fct:Serre quot} (2).
Since $\Ker(s)$, $\Ker(t)$, $\Cok(s)$, $\Cok(t)$ and $\Ker(f)$ belong to $\gen{M}_{\Serre}$,
the morphisms $s$ and $t$ are isomorphisms in $\catA/\gen{M}_{\Serre}$,
and $f$ is a monomorphism in $\catA/\gen{M}_{\Serre}$.
Thus there is a monomorphism $X \inj Y$ in $\catA/\gen{M}_{\Serre}$,
and hence $X$ remains a subobject of $Y$ in $\catA/\gen{M}_{\Serre}$.
\end{proof}

\begin{prp}\label{prp:sh on Serre}
Let $\catA$ be an abelian category
and $\shM_{\catA}$ a sheaf on $\Serre(\catA)$ constructed as above.
\begin{enua}
\item
For any $X\in\catA$,
we have $\shM_{\catA}(U_X) = \M(\catA/\gen{X}_{\Serre})$.
\item
In particular, we have $\shM_{\catA}(\Serre(\catA)) = \M(\catA)$.
\item
For any point $\catS\in\Serre(\catA)$,
the stalk $\shM_{\catA,\catS}$ of $\shM_{\catA}$
is isomorphic to $\M(\catA/\catS)$.
\end{enua}
\end{prp}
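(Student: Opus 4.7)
The plan is to dispose of (1) and (2) quickly by structural arguments on the basis $\calB$ of strongly quasi-compact opens, and then focus the bulk of the work on (3), which requires a careful colimit computation.

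First, for (1), I will observe that $U_X$ itself belongs to the indexing poset $\{U \in \calB \mid U \subseteq U_X\}$ and is the maximum (hence terminal) element there; so the inverse limit defining $\shM_{\catA}(U_X)$ collapses to the presheaf value $\M(\catA/\gen{X}_{\Serre})$. Then (2) follows by applying (1) to $X = 0$: every Serre subcategory contains the zero object so $U_0 = \Serre(\catA)$, while $\gen{0}_{\Serre} = 0$, giving $\shM_{\catA}(\Serre(\catA)) = \M(\catA/0) = \M(\catA)$.

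For (3), I will use that the basic opens $U_X$ with $X \in \catS$ form a cofinal system of neighborhoods of $\catS$, so that by (1) the stalk is
\[
\shM_{\catA,\catS} \iso \colim_{X \in \catS} \M(\catA/\gen{X}_{\Serre}).
\]
The index category is filtered, since for $X, Y \in \catS$ both $\gen{X}_{\Serre}, \gen{Y}_{\Serre} \subseteq \gen{X \oplus Y}_{\Serre}$ and $X \oplus Y \in \catS$. For each such $X$, the inclusion $\gen{X}_{\Serre} \subseteq \catS$ induces an exact functor $\catA/\gen{X}_{\Serre} \to \catA/\catS$ by the universal property of Serre quotients, and thus a monoid homomorphism $\M(\catA/\gen{X}_{\Serre}) \to \M(\catA/\catS)$. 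Compatibility with the transition maps yields $\phi \colon \shM_{\catA,\catS} \to \M(\catA/\catS)$, and I will show it is an isomorphism.

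Surjectivity of $\phi$ is straightforward: any $[Z] \in \M(\catA/\catS)$ with $Z \in \catA$ (the quotient functor is the identity on objects, hence essentially surjective) is the image of the germ at $\catS$ of $[Z] \in \M(\catA) = \shM_{\catA}(U_0)$. The hard part will be injectivity. Two germs can be represented in a common monoid $\M(\catA/\gen{X_0}_{\Serre})$ with $X_0 \in \catS$, as $[Z_1]$ and $[Z_2]$ say, and I will need to show that if they have equal image in $\M(\catA/\catS)$, then they already coincide after pushing forward along some $\M(\catA/\gen{X_0}_{\Serre}) \to \M(\catA/\gen{M}_{\Serre})$ for a larger $M \in \catS$. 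Equality in $\M(\catA/\catS)$ means, by Fact \ref{fct:= ab} applied to the abelian category $\catA/\catS$, that $Z_1 \sim_S Z_2$ in $\catA/\catS$ via finite data: admissible subobject series and isomorphisms of their successive subquotients. By Lemma \ref{lem:sh on Serre} each such subobject relation and each such isomorphism lifts individually to $\catA/\gen{M_j}_{\Serre}$ for some $M_j \in \catS$. The key trick, and the main obstacle to execute carefully, is to lift the whole S-equivalence diagram simultaneously: I will take $M$ to be the direct sum of $X_0$ with all the (finitely many) witnesses $M_j$, which still lies in $\catS$, and observe that in $\catA/\gen{M}_{\Serre}$ all lifts coexist since the quotient functor $\catA/\gen{M_j}_{\Serre} \to \catA/\gen{M}_{\Serre}$ is exact. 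This produces the required S-equivalence of $Z_1$ and $Z_2$ in $\catA/\gen{M}_{\Serre}$, hence $[Z_1] = [Z_2]$ in $\M(\catA/\gen{M}_{\Serre})$, proving injectivity.
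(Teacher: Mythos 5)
Your proposal is correct and follows essentially the same route as the paper: (1) and (2) are dispatched by noting the limit over the basis collapses, and for (3) you identify the stalk with the filtered colimit $\colim_{U_X\ni\catS}\M(\catA/\gen{X}_{\Serre})$, get surjectivity of $\phi$ from essential surjectivity of the quotient functor, and get injectivity by combining Fact \ref{fct:= ab} with Lemma \ref{lem:sh on Serre} and a direct sum of the finitely many witnesses. The only cosmetic difference is that the paper represents germs via the surjection $\M(\catA)\to\shM_{\catA,\catS}$ rather than in a common $\M(\catA/\gen{X_0}_{\Serre})$, which changes nothing of substance.
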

\begin{proof}
We only prove (3) because
(1) and (2) are obvious by the definition of $\shM_{\catA}$.
Let $\catS$ be a Serre subcategory of $\catA$.
For any $X\in \catA$ with $\catS \in U_X$,
we have the natural exact functor $\catA/\gen{X}_{\Serre} \to \catA/\catS$.
They induce a monoid homomorphism 
\[
\phi\colon \shM_{\catA,\catS}=\colim_{U_X \ni \catS}\shM_{\catA}(U_X)
=\colim_{U_X \ni \catS}\M(\catA/\gen{X}_{\Serre})
\to \M(\catA/\catS).
\]
It is clear that $\phi$ is surjective.
We now prove $\phi$ is injective.
We first note that the natural map $\M(\catA) \to \shM_{\catA,\catS}$ is surjective
since the natural map $\M(\catA) \to \M(\catA/\gen{M}_{\Serre})$ is surjective.
We denote by $[X]_{\catS}$ the element of $\shM_{\catA,\catS}$ represented by $X\in\catA$.
Suppose that $\phi\left([X]_{\catS} \right)=\phi\left([Y]_{\catS} \right)$
for some $X, Y \in \catA$.
Then $X$ and $Y$ are S-equivalent in $\catA/\catS$ by Fact \ref{fct:= ab}.
Hence there are admissible subobject series 
$0 = X_0 \le X_1 \le \cdots \le X_n=X$ and $0 = Y_0 \le Y_1 \le \cdots \le Y_n=Y$ 
such that $X_i/X_{i-1} \iso Y_{\sg(i)}/Y_{\sg(i)-1}$ in $\catA/\catS$
for some permutation $\sg\in\symg_n$.
Applying Lemma \ref{lem:sh on Serre} to $X_{i-1} \le X_{i}$, $Y_{i-1} \le Y_{i}$ and $X_i/X_{i-1} \iso Y_{\sg(i)}/Y_{\sg(i)-1}$,
and taking their direct sum,
we get $M\in \catS$ such that $X$ and $Y$ remain S-equivalent in $\catA/\gen{M}_{\Serre}$.
Thus $[X]=[Y]$ in $\M(\catA/\gen{M}_{\Serre})$ and $\catS \in U_M$.
This proves $[X]_{\catS}=[Y]_{\catS}$ in $\shM_{\catA,\catS}$.
\end{proof}

We compare the monoidal space $(\Serre(\catA),\shM_{\catA})$ 
with the affine monoid scheme $(\MSpec \M(\catA),\shO_{\M(\catA)})$.
Define a sheaf $\ol{\shO}_{\M(\catA)}$ on $\MSpec \M(\catA)$ by
the sheafification of presheaf
\[
U \mapsto \shO_{\M(\catA)}(U)/\shO_{\M(\catA)}(U)^{\times}.
\]
For any object $X \in \catA$,
we have an isomorphism
\begin{equation}\label{eq:iso 11}
\shO_{\M(\catA)}(D([X]))/\shO_{\M(\catA)}(D([X]))^{\times}
=\M(\catA)_{[X]}/\M(\catA)_{[X]}^{\times}
\isoto \M(\catA)/\gen{[X]}_{\face}
\end{equation}
by Lemma \ref{lem:face loc 1}, \ref{lem:face loc 2}.
In general, the natural monoid homomorphism $\M(\catA)/\M_\catS \to \M(\catA/\catS)$ 
is an isomorphism for any Serre subcategory $\catS$ by \cite[Corollary 4.29]{ES}.
Thus we have an isomorphism
\begin{equation}\label{eq:iso 12}
\M(\catA)/\gen{[X]}_{\face}\isoto \M(\catA/\gen{X}_{\Serre})=\shM_{\catA}(U_X).
\end{equation}
Let $\Phi \colon \Serre(\catE) \isoto \MSpec \M(\catA)$ 
be the homeomorphism in Proposition \ref{prp:top on Serre}.
Combining the isomorphisms \eqref{eq:iso 11} and \eqref{eq:iso 12},
we obtain an isomorphism $\Phi^{-1}\ol{\shO}_{\M(\catA)} \to \shM_\catA$ of sheaves of monoids.
Thus we have the following proposition.
\begin{prp}\label{prp:compare monoidal sp}
The bijection in Proposition \ref{prp:Serre face prime} induces an isomorphism of monoidal spaces
\[
(\Serre(\catA),\shM_{\catA}) \iso (\MSpec \M(\catA),\ol{\shO}_{\M(\catA)}).
\]
\end{prp}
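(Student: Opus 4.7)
The plan is to exhibit an isomorphism of sheaves of monoids $\Phi^{-1}\ol{\shO}_{\M(\catA)} \isoto \shM_{\catA}$ on $\Serre(\catA)$, where $\Phi \colon \Serre(\catA) \isoto \MSpec \M(\catA)$ is the homeomorphism of Proposition \ref{prp:top on Serre}; once combined with $\Phi$ this gives the desired isomorphism of monoidal spaces. Because both sheaves are determined by their values on the basis $\{U_X \mid X \in \catA\}$ of strongly quasi-compact opens (Lemma \ref{lem:sqc in Serre}), and because the construction of $\shM_{\catA}$ already pins down its value on such basis elements as $\M(\catA/\gen{X}_{\Serre})$ by Proposition \ref{prp:sh on Serre}(1), it suffices to produce for each $X \in \catA$ a natural isomorphism between the two evaluations on $U_X$ and then check compatibility with the restriction maps coming from inclusions $U_Y \subseteq U_X$.

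First I would assemble, on a single basic open $U_X$, the chain of identifications already laid out in the paragraphs preceding the statement. On the spectrum side, $\shO_{\M(\catA)}(D([X])) = \M(\catA)_{[X]}$ by the defining property of the structure sheaf. Lemma \ref{lem:face loc 1} upgrades the localization at the single element $[X]$ to the localization at the face $\gen{[X]}_{\face}$, and Lemma \ref{lem:face loc 2}(2) identifies the quotient by units with $\M(\catA)/\gen{[X]}_{\face}$. Corollary \ref{cor:Serre bij} translates the face $\gen{[X]}_{\face}$ into $\M_{\gen{X}_{\Serre}}$, and the cited identification \cite[Corollary 4.29]{ES} gives $\M(\catA)/\M_{\gen{X}_{\Serre}} \iso \M(\catA/\gen{X}_{\Serre}) = \shM_{\catA}(U_X)$. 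Composing produces the required isomorphism on $U_X$.

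Next I would verify naturality in $X$: for $U_Y \subseteq U_X$, equivalently $\gen{Y}_{\Serre} \supseteq \gen{X}_{\Serre}$ by Lemma \ref{lem:sqc fg Serre}, the restriction maps on both sides are induced by the evident quotient/localization maps, and each of the four identifications above is functorial in the face (respectively, Serre subcategory) being killed. Thus the four squares stack into one commutative square expressing compatibility with restriction, and the isomorphisms on the basis extend uniquely to an isomorphism of sheaves of monoids on $\Serre(\catA)$. The locality condition for a morphism of monoidal spaces is automatic: each induced map on stalks is an isomorphism (combine Proposition \ref{prp:sh on Serre}(3) with Lemma \ref{lem:face loc 2} at the stalk level), and any isomorphism of monoids is local.

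The main obstacle should be the bookkeeping in the naturality step, namely checking that passing from $\gen{X}_{\Serre}$ to the larger $\gen{Y}_{\Serre}$ really does commute with all of: localization of $\M(\catA)$ at elements versus faces, quotient of a localization by its units versus quotient by a face, and passage to the abelian quotient category. Each of these compatibilities is forced by the relevant universal property, but one has to write them out to be sure that the composite identification on $U_X$ and on $U_Y$ agree after restriction. Once this diagram chase is done, the map is built on a basis of strongly quasi-compact opens and therefore extends unambiguously through the sheafification and through the $\plim$ appearing in the construction of $\shM_{\catA}$.
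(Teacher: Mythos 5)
Your proposal is correct and follows essentially the same route as the paper: the paper's argument is precisely the chain of identifications $\shO_{\M(\catA)}(D([X]))/(\text{units}) = \M(\catA)_{[X]}/\M(\catA)_{[X]}^{\times} \iso \M(\catA)/\gen{[X]}_{\face} \iso \M(\catA/\gen{X}_{\Serre}) = \shM_{\catA}(U_X)$ on the basis of strongly quasi-compact opens, via Lemmas \ref{lem:face loc 1} and \ref{lem:face loc 2} and \cite[Corollary 4.29]{ES}, assembled through the homeomorphism $\Phi$. Your added attention to compatibility with restriction maps and the automatic locality of stalk isomorphisms only makes explicit what the paper leaves implicit.
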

\subsection{Reconstruction of the topology of a noetherian scheme}\label{ss:Recon}
In this subsection, we recover the topology of a noetherian scheme $X$
from the Grothendieck monoid $\M(\coh X)$.
Hereafter $X$ is a noetherian scheme.

We first construct an immersion from $X$ to $\Serre(\coh X)$ as topological spaces.
For any point $x\in X$, define a subcategory of $\coh X$ by
\[
\coh^x X:=\{\shF\in\coh X \mid \shF_x=0 \}.
\]
It is clear that $\coh^x X$ is a Serre subcategory of $\coh X$.
Let $j\colon X \to \Serre(\coh X)$ be a map defined by $j(x):=\coh^x X$.
\begin{lem}\label{lem:emb X}
The map $j\colon X \to \Serre(\coh X)$ is an immersion of topological spaces.
That is, it is a homeomorphism onto a subspace of $\Serre(\coh X)$.
\end{lem}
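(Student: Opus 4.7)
The plan is to verify that $j$ satisfies the three standard conditions for being a topological immersion: injectivity, continuity, and being an open map onto its image. The key input in all three steps is the translation $\shF \in \coh^x X \Leftrightarrow \shF_x = 0 \Leftrightarrow x \notin \Supp\shF$, together with the fact that any closed subset of the noetherian scheme $X$ is the support of some coherent sheaf (for instance, take $i_\ast \shO_Z$ where $i\colon Z \inj X$ is the reduced induced closed subscheme structure on the given closed subset).

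For injectivity, suppose $j(x) = j(y)$, i.e., $\coh^x X = \coh^y X$. Put $Z := \ol{\{x\}}$ and let $\shF := i_\ast \shO_Z$ for $i\colon Z \inj X$ the reduced closed immersion, so $\Supp \shF = Z$. Since $x \in Z$, we have $\shF \notin \coh^x X = \coh^y X$, forcing $y \in \Supp\shF = \ol{\{x\}}$. By symmetry $x \in \ol{\{y\}}$, hence $\ol{\{x\}} = \ol{\{y\}}$, and since $X$ is sober (every scheme is), this gives $x = y$.

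For continuity, it suffices to pull back a basis of opens. By Proposition \ref{prp:top on Serre}, the sets $U_\shF = \{\catS \in \Serre(\coh X) \mid \shF \in \catS\}$ for $\shF \in \coh X$ form an open basis of $\Serre(\coh X)$, and
\[
j^{-1}(U_\shF) = \{x \in X \mid \shF_x = 0\} = X \setminus \Supp\shF,
\]
which is open in $X$ because $\Supp\shF$ is closed.

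The main point, and the step I expect to be the trickiest to phrase cleanly, is showing that $j$ is open onto its image. Given any open $U \subseteq X$, set $Z := X \setminus U$ and pick $\shF \in \coh X$ with $\Supp\shF = Z$ (again $i_\ast\shO_Z$ for the reduced structure works, using noetherianness to guarantee coherence). Then $j^{-1}(U_\shF) = X \setminus Z = U$, and since $j$ is injective this yields $j(U) = U_\shF \cap j(X)$, which is open in the subspace topology on $j(X)$. Combined with injectivity and continuity, this proves $j$ is a homeomorphism onto $j(X)$, i.e., an immersion. The only subtle ingredient throughout is the existence of a coherent sheaf realizing an arbitrary closed subset as its support, which rests on the noetherian hypothesis on $X$.
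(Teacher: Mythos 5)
Your proposal is correct and follows essentially the same route as the paper: both arguments rest on the computation $j^{-1}(U_\shF)=X\setminus\Supp\shF$, on realizing an arbitrary closed subset as the support of the structure sheaf of its reduced induced subscheme, and on the $T_0$/sober property of schemes for injectivity. The only cosmetic difference is that you verify $j$ is \emph{open} onto its image via the basic opens $U_\shF$, whereas the paper verifies it is \emph{closed} onto its image via the sets $V(\{\shO_Z\})$; these are trivially equivalent here.
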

\begin{proof}
We first prove that $j$ is injective.
Let $x,y \in X$ be distinct points.
Since any scheme is $T_0$-space (cf. \cite[Proposition 3.25]{GW}),
the specialization order on $X$ is a partial order by Remark \ref{rmk:spcl order} (2).
Thus $x\not\in \ol{\{y\}}$ or $y\not\in \ol{\{x\}}$ hold.
We may assume that $x\not\in \ol{\{y\}}$.
Then $\shO_{\ol{\{x\}}} \not\in \coh^x X$ but $\shO_{\ol{\{y\}}} \in \coh^x X$,
where we consider $\ol{\{x\}}$ and $\ol{\{y\}}$ as reduced subschemes of $X$.
Hence $\coh^x X \ne \coh^y X$, which proves $j$ is injective.
For a coherent sheaf $\shF$ on $X$, we have
\[
j^{-1}(U_{\shF})=\{x\in X \mid \shF \in \coh^x X\}
=\{x\in X \mid \shF_x =0\}
=X \setminus \Supp \shF.
\]
Thus $j$ is continuous.
Let $Z$ be a closed subset of $X$.
We consider $Z$ as a reduced subscheme of $X$.
Then it is straightforward that
$\coh^x X \in V(\{\shO_Z\})$ if and only if $x\in \Supp(\shO_Z)=Z$ for any $x\in X$.
Thus we have $j(Z)=j(X)\cap V(\{\shO_Z\})$,
and hence $j(Z)$ is a closed subset of $j(X)$.
Therefore $j$ is a homeomorphism onto the subspace $j(X)$ of $\Serre(\coh X)$.
\end{proof}



Next, we determine the image of the immersion $j\colon X \inj \Serre(\coh X)$.
A Serre subcategory $\catS$ of an abelian category $\catA$ is \emph{meet-irreducible}
if $\catX \cap \catY \subseteq \catS$ implies $\catX \subseteq \catS$ or $\catY \subseteq \catS$
for any $\catX,\catY \in \Serre(\catA)$.
\begin{prp}\label{prp:meet-irred}
For a Serre subcategory $\catS$ of $\coh X$,
it is meet-irreducible if and only if $\catS=\coh^x X$ for some point $x\in X$.
In particular, we have
\[
j(X)=\{\catS \in \Serre(\coh X) \mid \text{$\catS$ is meet-irreducible}\}.
\]
\end{prp}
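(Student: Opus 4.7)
The plan is to translate the claim into the language of Gabriel's bijection (Fact~\ref{fct:Gab 1}) and then argue in the dual lattice of generalization-closed subsets of $X$. I first observe that the bijection $\catS \mapsto \Supp\catS$ between $\Serre(\coh X)$ and the specialization-closed subsets of $X$ preserves meets: the nontrivial containment $\Supp\catS_1 \cap \Supp\catS_2 \subseteq \Supp(\catS_1 \cap \catS_2)$ follows because, for $\shF \in \catS_1$ and $\shG \in \catS_2$, the tensor $\shF\otimes\shG$ lies in $\catS_1 \cap \catS_2$ (each $\catS_i$ equals $\coh_{Z_i}X$) and has support $\Supp\shF \cap \Supp\shG$. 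Consequently, $\catS$ is meet-irreducible if and only if its associated specialization-closed subset $Z_\catS$ is meet-irreducible, equivalently (taking complements) if and only if the generalization-closed subset $U := X \setminus Z_\catS$ is \emph{join-irreducible}, meaning $U \subseteq U_1 \cup U_2$ with each $U_i$ generalization-closed implies $U \subseteq U_1$ or $U \subseteq U_2$.

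Next I identify the generalization-closed subset corresponding to $\coh^xX$. Writing $G(x) := \{y \in X \mid x \in \ol{\{y\}}\}$ for the set of generalizations of $x$, I claim $Z_{\coh^xX} = X \setminus G(x)$: if $\shF_x = 0$, then $\Supp\shF$ is a closed set missing $x$ and hence missing every $y \in G(x)$ (otherwise $\ol{\{y\}}\subseteq\Supp\shF$ would contain $x$); conversely, for any $z \in X\setminus G(x)$, the structure sheaf $\shO_{\ol{\{z\}}}$ of the reduced closed subscheme $\ol{\{z\}}$ has $x \notin \ol{\{z\}}$, so it lies in $\coh^xX$ and has $z$ in its support. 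Thus the task reduces to showing that a generalization-closed subset $U \subseteq X$ is join-irreducible if and only if $U = G(x)$ for some $x$. The ``if'' direction is immediate: given $G(x) \subseteq U_1 \cup U_2$, the point $x$ lies in some $U_i$, and since $U_i$ is upward-closed in the specialization order $\preceq$, it contains all of $G(x)$.

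For the converse, let $U$ be nonempty, generalization-closed, and join-irreducible; I aim to show $U$ has a unique minimum $x$ in $(X,\preceq)$, so that $U = G(x)$. For uniqueness, suppose $x_1 \ne x_2$ were distinct minimal elements of $U$. Each set $U_i := U \setminus \{x_i\}$ is still generalization-closed: if $z \in U_i$ and $y$ is a generalization of $z$, then $y \in U$, and $y = x_i$ would give $z \preceq x_i$, forcing $z = x_i$ by minimality and contradicting $z \in U_i$. Then $U_1 \cup U_2 = U$ while $U \not\subseteq U_i$ for each $i$, contradicting join-irreducibility. For existence, I invoke noetherianity: the map $x \mapsto \ol{\{x\}}$ sends a strictly descending chain in $(X,\preceq)$ to a strictly descending chain of closed subsets (using that schemes are $T_0$), which must be finite since the underlying topological space of $X$ is noetherian; hence $(X,\preceq)$ satisfies the descending chain condition, and every nonempty $U$ has a minimal element. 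Combining with uniqueness yields $U = G(x)$, and the ``in particular'' clause follows from the first half. The main obstacle is the existence step, where the noetherian hypothesis on $X$ is used essentially --- without it the claim could fail, since a generalization-closed subset need not possess a minimum.
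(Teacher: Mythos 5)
Your proof is correct and follows essentially the same route as the paper: pass through Gabriel's bijection, take complements to reduce to join-irreducible generalization-closed subsets, and show these are exactly the sets of generalizations of single points using noetherianity to produce minimal elements. The only (harmless) variations are your explicit tensor-product check that the bijection preserves meets, which is automatic for a poset isomorphism, and your extraction of uniqueness of the minimal element via the cover $U=(U\setminus\{x_1\})\cup(U\setminus\{x_2\})$ in place of the paper's decomposition of $U$ over all of its minimal elements.
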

\begin{proof}
By Gabriel's classification of Serre subcategories (Fact \ref{fct:Gab 1}),
there is a poset isomorphism
\[
\Spcl(X) \isoto \Serre(\coh X),\quad
Z \mapsto \coh_Z X:=\{\shF \in \coh X \mid \Supp\shF \subseteq Z\},
\]
where $\Spcl(X)$ is the set of specialization-closed subsets of $X$ ordered by inclusion.
Then for any $Z\in \Spcl(X)$,
the Serre subcategory $\coh_Z X$ is meet-irreducible if and only if
so is $Z$ in the following sense:
\begin{itemize}
\item A specialization-closed subset $Z$ is \emph{meet-irreducible} if
$A \cap B \subseteq Z$ implies $A \subseteq Z$ or $B\subseteq Z$
for any $A, B \in \Spcl(X)$.
\end{itemize}
We also introduce the dual notion for generalization-closed subsets.
We denote by $\Genl(X)$ the set of generalization-closed subsets of $X$.
\begin{itemize}
\item A generalization-closed subset $U$ is \emph{join-irreducible} if
$A \cup B \supseteq U$ implies $A \supseteq U$ or $B\supseteq U$
for any $A, B \in \Genl(X)$.
\end{itemize}
Then $Z\in \Spcl(X)$ is meet-irreducible if and only $Z^c(:=X\setminus Z) \in \Genl(X)$ is join-irreducible.
Therefore, it is equivalent that determining meet-irreducible Serre subcategories of $\coh X$ 
and determining join-irreducible generalization-closed subsets of $X$.

We now determine join-irreducible generalization-closed subsets of $X$.
For any point $x\in X$,
we denote by $\gen{x}_{\genl}$ the set of generalizations of $x$.
It is clear that
$\gen{x}_{\genl}$ is a join-irreducible generalization-closed subset for any $x\in X$.
We prove that any join-irreducible generalization-closed subset is of the form $\gen{x}_{\genl}$
for some $x\in X$.
Let $A\in \Genl(X)$ be join-irreducible.
For any $x\in A$, there is a minimal element $y\in A$ with respect to the specialization-order
such that $y \preceq x$.
Indeed, 
if $\ol{\{x\}} \cap A$ has exactly one point $x$,
then $x$ itself is a minimal element of $A$.
If $\ol{\{x\}} \cap A$ contains a point $x_1$ such that $x\ne x_1$,
then we have a sequence $x \succeq x_1$ of points of $A$.
Repeating this operation,
we have a sequence $x \succeq x_1 \succeq x_2 \succeq \cdots$ of points of $A$.
This sequence terminates since $X$ is noetherian.
Thus we get a minimal element $y\in A$ such that $y \preceq x$.
Let $I$ be the set of minimal elements of $A$.
Then $A=\bigcup_{a\in I} \gen{a}_{\genl}$ by the discussion above.
Fix $x\in I$.
Since
\[
A=\gen{x}_{\genl} \cup \bigcup_{a\in I, a\ne x} \gen{a}_{\genl}
\]
and $A$ is join-irreducible,
we have that $A=\gen{x}_{\genl}$ or $A=\bigcup_{a\in I, a\ne x} \gen{a}_{\genl}$.
If $A=\bigcup_{a\in I, a\ne x} \gen{a}_{\genl}$,
then $x\in A=\bigcup_{a\in I, a\ne x} \gen{a}_{\genl}$.
Hence there is $a\in I$ such that $a \ne x$ and $x \in \gen{a}_{\genl}$,
which contradicts the minimality of $x$.
Thus we have $A=\gen{x}_{\genl}$.

We have proved that a subset $A$ of $X$ is join-irreducible generalization-closed subsets
if and only if $A=\gen{x}_{\genl}$ for some $x\in X$.
We can easily see that $\shF_x=0$ if and only if $\Supp\shF \subseteq \gen{x}_{\genl}^c$
for any $\shF \in \coh X$ because $\Supp\shF$ is specialization-closed.
Thus $\coh_{\gen{x}_{\genl}^c} X = \coh ^x X$, which proves the proposition.
\end{proof}

Let $X$ be a noetherian scheme.
Define $\Serre(\coh X)_{\irred}$ by the set of meet-irreducible Serre subcategories of $\coh X$.
We consider it as a subspace of $\Serre(\coh X)$.
Then the immersion $j\colon X \isoto \Serre(\coh X)$ induces 
a homeomorphism $X \isoto \Serre(\coh X)_{\irred}$ by Lemma \ref{lem:emb X}.
Thus we can recover the topological space $X$ from the topological space $\Serre(\coh X)$.
In particular, the Grothendieck monoid $\M(\coh X)$ recovers the topology of $X$.
Moreover, $\Serre(\coh X)_{\irred}$ has the following property.
\begin{lem}\label{lem:meet-irred}
Let $X$ and $Y$ be noetherian schemes.
Any homeomorphism $\Serre(\coh X) \isoto \Serre(\coh Y)$ restricts to
a homeomorphism $\Serre(\coh X)_{\irred} \isoto \Serre(\coh Y)_{\irred}$.
\end{lem}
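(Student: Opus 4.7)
The plan is to observe that meet-irreducibility of a Serre subcategory is a property determined entirely by the inclusion order on $\Serre(\coh X)$, which in turn is a topological invariant of $\Serre(\coh X)$ via the specialization order. Thus any homeomorphism automatically preserves it.

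First I would note that any homeomorphism $f\colon \Serre(\coh X) \isoto \Serre(\coh Y)$ preserves specialization: since $x \preceq y$ is defined by $x \in \overline{\{y\}}$, a purely topological condition, both $f$ and $f^{-1}$ send specializations to specializations. Combined with Proposition \ref{prp:top on Serre}(3), which identifies the specialization order on $\Serre(\coh X)$ with the inclusion order, this means $f$ is an isomorphism of posets $(\Serre(\coh X), \subseteq) \isoto (\Serre(\coh Y), \subseteq)$.

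Next I would verify that for Serre subcategories $\catX$ and $\catY$ of an abelian category $\catA$, the intersection $\catX \cap \catY$ is again a Serre subcategory and is the greatest lower bound of $\catX$ and $\catY$ in $(\Serre(\catA),\subseteq)$; this is a straightforward check from the definition of Serre subcategory. Consequently, the definition of meet-irreducibility ($\catX \cap \catY \subseteq \catS$ implies $\catX \subseteq \catS$ or $\catY \subseteq \catS$) can be rewritten as ``$A \wedge B \le \catS$ implies $A \le \catS$ or $B \le \catS$'' in the language of the poset $(\Serre(\catA),\subseteq)$, hence is invariant under poset isomorphisms.

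Combining these two observations, $f$ bijects meet-irreducible elements to meet-irreducible elements, so it restricts to a bijection $\Serre(\coh X)_{\irred} \to \Serre(\coh Y)_{\irred}$. Since this restriction and its inverse are obtained by restricting the homeomorphism $f$ and $f^{-1}$ to subspaces, they are continuous, yielding the claimed homeomorphism. No step presents a real obstacle; the only point worth handling with care is the identification of set-theoretic intersection with the lattice meet in $\Serre(\catA)$, and this is immediate.
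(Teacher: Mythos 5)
Your proposal is correct and follows essentially the same route as the paper: a homeomorphism preserves the specialization order, which by Proposition \ref{prp:top on Serre}~(3) is the inclusion order, so it is a poset isomorphism and hence preserves meet-irreducibility. Your extra remark that the set-theoretic intersection of Serre subcategories is the meet in $(\Serre(\catA),\subseteq)$ is a point the paper leaves implicit, and it is the right thing to check.
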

\begin{proof}
Let $\Psi\colon \Serre(\coh X) \isoto \Serre(\coh Y)$ be a homeomorphism.
Then it is clear that 
$\Psi$ is also a poset isomorphism $\Serre(\coh X)_{\spcl} \isoto \Serre(\coh Y)_{\spcl}$.
Thus $\Psi$ is an isomorphism of the poset $\Serre(\coh X)$ and $\Serre(\coh Y)$ 
ordered by inclusion by Proposition \ref{prp:top on Serre} (3).
Therefore $\Psi$ preserves meet-irreducible Serre subcategories,
and hence we have $\Psi(\Serre(\coh X)_{\irred}) = \Serre(\coh Y)_{\irred}$.
\end{proof}

Based on the above considerations, we obtain the following.
\begin{thm}\label{thm:recover}
Consider the following conditions for noetherian schemes $X$ and $Y$.
\begin{enua}
\item
$X \iso Y$ as schemes.
\item
$\M(\coh X) \iso \M(\coh Y)$ as monoids.
\item
$\MSpec \M(\coh X) \iso \MSpec \M(\coh Y)$ as topological spaces.
\item
$\Serre(\coh X) \iso \Serre(\coh Y)$ as topological spaces.
\item
$X \iso Y$ as topological spaces.
\end{enua}
Then ``$(1)\imply (2) \imply (3) \equi (4) \imply (5)$'' hold.
\end{thm}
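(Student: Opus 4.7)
The plan is to prove each implication separately, using the machinery already developed. The structure is largely dictated by the preparatory results, so the main work is just gluing them together in the right order.

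For $(1) \imply (2)$: An isomorphism of schemes $f \colon X \isoto Y$ induces an exact equivalence $f^* \colon \coh Y \simto \coh X$, and by the functoriality of the Grothendieck monoid under exact equivalences (discussed in \S \ref{ss:exact}), we obtain a monoid isomorphism $\M(f^*) \colon \M(\coh Y) \isoto \M(\coh X)$.

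For $(2) \imply (3)$: Any monoid isomorphism $\phi \colon \M(\coh X) \isoto \M(\coh Y)$ induces an inclusion-reversing bijection $\Face \phi$ between faces, hence an inclusion-preserving bijection between prime ideals via Fact \ref{fct:bij face prime}, which is easily checked to be a homeomorphism with respect to the Zariski topology since closed sets $V(S)$ are defined purely in terms of the monoid structure.

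For $(3) \equi (4)$: This is immediate from Proposition \ref{prp:top on Serre} (1), which provides a canonical homeomorphism $\Serre(\catE) \iso \MSpec \M(\catE)$ for any exact category $\catE$; applying it to $\catE = \coh X$ and $\catE = \coh Y$ yields the equivalence.

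For the substantive implication $(4) \imply (5)$: Given a homeomorphism $\Psi \colon \Serre(\coh X) \isoto \Serre(\coh Y)$, Lemma \ref{lem:meet-irred} restricts $\Psi$ to a homeomorphism $\Serre(\coh X)_{\irred} \isoto \Serre(\coh Y)_{\irred}$ between the subspaces of meet-irreducible Serre subcategories. By Lemma \ref{lem:emb X} and Proposition \ref{prp:meet-irred}, the map $j_X \colon X \to \Serre(\coh X)$, $x \mapsto \coh^x X$, restricts to a homeomorphism $X \isoto \Serre(\coh X)_{\irred}$, and similarly for $Y$. Composing, we obtain $X \iso \Serre(\coh X)_{\irred} \iso \Serre(\coh Y)_{\irred} \iso Y$ as topological spaces. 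The hard part of this implication is entirely absorbed into Proposition \ref{prp:meet-irred} and Lemma \ref{lem:meet-irred}; at the level of this theorem, it is simply a matter of composing the identifications.

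Since each implication reduces to a result already proved, the main obstacle is merely one of exposition: being careful that the homeomorphism in $(4)$ need not be induced by a scheme morphism, so the recovery of $(5)$ must go through the intrinsic characterization of closed points of $X$ as meet-irreducible Serre subcategories of $\coh X$ (which is exactly what Proposition \ref{prp:meet-irred} provides). No new computation is needed.
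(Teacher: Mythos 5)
Your proposal is correct and follows essentially the same route as the paper: $(1)\imply(2)\imply(3)$ by functoriality and the monoid-theoretic definition of the Zariski topology, $(3)\equi(4)$ via Proposition \ref{prp:top on Serre}, and $(4)\imply(5)$ by combining Lemma \ref{lem:emb X}, Proposition \ref{prp:meet-irred} and Lemma \ref{lem:meet-irred}. (One cosmetic slip: a monoid isomorphism induces an \emph{inclusion-preserving} bijection on faces, and it is the passage to complements that reverses inclusions; this does not affect your argument.)
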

\begin{proof}
The implications ``$(1)\imply (2) \imply (3)$'' are obvious.
The equivalence ``$(3) \equi (4)$'' follows from Proposition \ref{prp:top on Serre}.
The implication ``$(4)\imply (5)$'' follows from Lemma \ref{lem:emb X} and \ref{lem:meet-irred}.
\end{proof}

We finally comment on \cite{BKS07} and our approach.
\begin{rmk}
Let $X$ be a noetherian scheme.
Buan, Krause and Solberg reconstructed
the topological space $X$ from the poset $\Serre(\coh X)$ of Serre subcategories
in \cite{BKS07}.
We review their approach and compare it with ours.

We first recall the spectrum of a frame.
See \cite{BKS07} and \cite{FandL} for detailed explanations.
A \emph{frame} is a poset $L=(L,\le)$ satisfying the following conditions:
\begin{itemize}
\item 
$L$ is a complete lattice,
that is, any subset $A$ of $L$ admits a supremum $\sup A=\Bor_{a \in A} a$ 
and an infimum $\inf A=\Band_{a \in A} a$.
We denote by $a \lor b:= \sup\{a,b\}$ and $a \land b:= \inf\{a,b\}$.
\item
$L$ satisfies the distributed law:
\[
(\Bor_{a\in A} a)\land b=\Bor_{a\in A} (a\land b)
\]
for any subset $A\subseteq L$ and any element $b\in L$.
\end{itemize}
An element $p$ of a frame $L$ is \emph{meet-irreducible}
if $x \land y \le p$ implies $x\le p$ or $y \le p$ for any $x,y\in L$.
The set $\LSpec L$ of meet-irreducible elements of $L$ 
is called the \emph{lattice spectrum} of $L$.
The set $\LSpec L$ has a topology 
whose closed subsets are of the form
\[
V(a):=\{p\in \LSpec L \mid a \le p\},\quad a\in L.
\]
We can endow the underlying set $\LSpec L$ 
with a new topology by taking subsets of the following form to be the open subsets:
\begin{align}\label{eq:dual top}
Y=\bigcup_{i\in I}Y_i\quad
\text{such that $X \setminus Y_i$ is a quasi-compact open
in $\LSpec L$ for all $i\in I$.}
\end{align}
We denote this new space by $\LSpec^* L$ 
and call this topology the \emph{dual topology} of $\LSpec L$,
which was introduced by Hochster in \cite{Hoch}.

We now come back to the case $L=\Serre(\coh X)$.
Buan, Krause and Solberg proved that
$X$ and $\LSpec^* (\Serre(\coh X))$ are homeomorphic
for any noetherian scheme $X$ in \cite[Section 9]{BKS07}
by using Hochster duality \cite[Proposition 8]{Hoch}.
This gives another proof of ``$(4)\imply (5)$'' in Theorem \ref{thm:recover}.
Indeed, consider the following condition:
\begin{enumerate}
\item[(4.5)] $\Serre(\coh X) \iso \Serre(\coh Y)$ as posets.
\end{enumerate}
Then ``$(4)\imply (4.5)$'' holds by Proposition \ref{prp:top on Serre} (3).
If $\Serre(\coh X) \iso \Serre(\coh Y)$ as posets,
then we have homeomorphisms
\[
X \iso \LSpec^* (\Serre(\coh X)) \iso \LSpec^* (\Serre(\coh Y)) \iso Y.
\]
Thus ``$(4.5)\imply (5)$'' holds.

We now compare our approach with that of \cite{BKS07}.
An advantage of our approach is that we can avoid the dual topology and Hochster duality.
Although $\Serre(\coh X)_{\irred}=\LSpec(\Serre(\coh X))$ as subsets of $\Serre(\coh X)$,
they have different topologies.
$\Serre(\coh X)$ has the correct topology in the sense that
$X$ can be embedded in $\Serre(\coh X)$ as a topological space.

On the other hand,
an advantage of the approach of \cite{BKS07} is that it is more general than ours.
Indeed, we can recover the poset structure of $\Serre(\coh X)$ from 
the topology of $\Serre(\coh X)$ by considering the specialization-order
(Proposition \ref{prp:top on Serre} (3)).
However, the author does not know whether the topology on $\Serre(\coh X)$
which we introduced in this paper can be recovered from the poset structure of $\Serre(\coh X)$.
In particular, we cannot prove ``$(4.5)\imply (5)$'' by our approach.

In summary,
our approach is simpler than \cite{BKS07} and avoids heavy facts such as Hochster duality,
but the approach of \cite{BKS07} is more general than ours.
\end{rmk}


\end{document}